\newtheorem{thm}{Theorem}[section]
\newtheorem{lem}[thm]{Lemma}
\newtheorem{prop}[thm]{Proposition}
\newtheorem{cor}[thm]{Corollary}
\newtheorem{conj}[thm]{Conjecture}
\theoremstyle{definition}
\newtheorem{defin}[thm]{Definition}
\newtheorem{remark}[thm]{Remark}
\newtheorem{example}[thm]{Example}
\newcommand{\bth}{\begin{thm}}
\renewcommand{\eth}{\end{thm}}
\newcommand{\bpr}{\begin{prop}}
\newcommand{\epr}{\end{prop}}
\newcommand{\ble}{\begin{lem}}
\newcommand{\ele}{\end{lem}}
\newcommand{\bco}{\begin{cor}}
\newcommand{\eco}{\end{cor}}
\newcommand{\bde}{\begin{defin}}
\newcommand{\ede}{\end{defin}}
\newcommand{\bex}{\begin{example}}
\newcommand{\eex}{\end{example}}
\newcommand{\bre}{\begin{remark}}
\newcommand{\ere}{\end{remark}}
\newcommand{\bcj}{\begin{conj}}
\newcommand{\ecj}{\end{conj}}
\newcommand{\beq}{\begin{equation}}
\newcommand{\eeq}{\end{equation}}
\newcommand{\ve}{{\varepsilon}}
\newcommand{\ot}{{\otimes}}
\newcommand{\op}{{\oplus}}
\newcommand{\lb}{\label}
\newcommand{\nl}{\newline}
\newcommand{\bpf}{\begin{proof}}
\newcommand{\epf}{\end{proof}}
\newcommand{\da}{\text{-}}
\newcommand{\g}{{\frak g}}
\newcommand{\A}{{\cal A}}
\newcommand{\C}{{\cal C}}
\newcommand{\D}{{\cal D}}
\newcommand{\K}{{\cal K}}
\newcommand{\bN}{{\mathbb N}}
\newcommand{\G}{{\cal G}}
\renewcommand{\S}{{\cal S}}
\newcommand{\M}{{\cal M}}
\newcommand{\Mod}{{\cal M}{\it od}}
\newcommand{\Rep}{{\cal R}{\it ep}}
\newcommand{\Vect}{{\cal V}{\it ect}}
\newcommand{\De}{{\bold \Delta}}
\newcommand{\Ga}{{\bold \Gamma}}
\newcommand{\Fin}{\mathbf{Fin}_*} 
\newcommand{\bl}{{\{{\hspace{-2.5pt}}\{}}
\newcommand{\br}{{\}\hspace{-2.5pt}\}}}
\newcommand{\bV}{{\bf V}}
\newcommand{\bMon}{{\bf Mon}}
\newcommand{\bCom}{{\bf Com}}
\newcommand{\bSet}{{\bf Set}}
\newcommand\void[1]{}
\def\boxx{\mbox{$\scriptstyle\square$}}
\def\LL{\mathcal{L}}
\def\MM{\mathcal{M}}
\def\TT{\mathcal{S}k}
\def\EE{\mathcal{E}}
\def\FF{\mathcal{F}}
\def\FM{\mathcal{FM}}
\def\Cat{\mathbf{Cat}}
\def\Set{\mathrm{Set}}
\def\Mod{\mathrm{Mod}}
\def\sgn{\rm sgn}
\def\alt{\mathrm{alt}}
\def\LL{\mathcal{L}}
\def\BB{\mathcal{B}r}
\def\shuf{\mathcal{S}h}
\def\coend{\mathcal{C}}
\def\tr{\bold{sk}}
\def\link{\bold{ lk}}
\def\cc{\bold{c}}
\def\slp{\bold{slp}}
\def\nlp{\bold{nlp}}
\def\ll{|\mathcal{L}^{(n)}|}
\def\mm{|\mathcal{M}^{(n-1)}|}
\def\scup{{\scriptstyle \cup}}
\begin{document}
\author{Michael Batanin $^{a)}$ and Alexei Davydov $^{b)}$\footnote{corresponding author}}
\title{Cosimplicial monoids and deformation theory of tensor categories}
\maketitle
\date{}
\maketitle
\begin{center}\small
$^{a)}$ Department of Mathematics, %Division of Information and Communication Sciences, \\
Macquarie University, Sydney, NSW 2109, Australia 
\\ bataninmichael@gmail.com,\quad michael.batanin@mq.edu.au
\\ \quad  \\
$^{b)}$ Department of Mathematics, Ohio University, Athens, OH 45701, USA \\  davydov@ohio.edu 
\end{center}

\begin{abstract}
We introduce the notion of $n$-commutativity ($0\le n\le \infty$) for cosimplicial monoids  in a symmetric monoidal category $\bV$, where  $n=0$ corresponds to just cosimplicial monoids in $\bV,$ while $n=\infty$ corresponds to commutative cosimplicial monoids. When $\bV$ has a monoidal model structure we endow (under some mild technical conditions) the total object  of an $n$-cosimplicial monoid with a natural and very explicit $E_{n+1}$-algebra structure. 
\nl
Our main applications are to the deformation theory of tensor categories and tensor functors. We show that the deformation complex of a tensor functor  is a total complex of a $1$-commutative cosimplicial monoid and, hence, has an $E_2$-algebra structure similar to the $E_2$-structure on Hochschild complex of an associative algebra provided by Deligne's conjecture. We further demonstrate that the deformation complex of a tensor category is the total complex of a $2$-commutative cosimplicial monoid and, therefore, is naturally an $E_3$-algebra. We make these structures very explicit through a language of Delannoy paths and their noncommutative liftings. We investigate how these structures manifest themselves in concrete examples. 
\end{abstract}

\begin{center}2020 Mathematics Subject Classification: 18M05, 18M75, 18N50\end{center}

\begin{center}Keywords: cosimplicial algebras, Steenrod products, $E_n$-algebras, operads, tensor categories,  deformation complexes \end{center}

\tableofcontents

\section{Introduction}

There are two  categories naturally associated to any symmetric monoidal category $\bV$. Namely the category of monoids $\bMon(\bV)$ and its subcategory of commutative monoids $\bCom(\bV)$.
Generally speaking, due to the classical Eckman-Hilton argument, one can not define a natural intermediate subcategory of  monoids in between them without going to the realm of higher categories.    

Nevertheless, we argue in this paper that in some specific symmetric monoidal category one can have nontrivial intermediate subcategories of $n$-commutative monoids for every $0\le n\le \infty,$
 where   
 $n=0$ corresponds to monoids,  while $n=\infty$ corresponds to commutative monoids. These intermediate structures, have essentially set theoretical description, yet they model $E_{n+1}$-algebras, which is a higher categorical or homotopy theoretical concept.  

More precisely, let $\De$ be the category of nonempty finite ordinals with  nondecreasing maps. 
 A {\em cosimplicial monoid}   in $\bV$ is a  cosimplicial object in the category of monoids in $\bV$ that is a functor 
$E:\De \longrightarrow  \bMon(\bV).$
Similarly a {\em commutative cosimplicial monoid} is a cosimplicial object in the category $\bCom(\bV).$ 
\nl
Note that a  cosimplicial monoid (a commutative cosimplicial monoid) in $\bV$ is the same thing as a monoid (commutative monoid) in the category of cosimplicial objects in $\bV$ with respect to the pointwise symmetric monoidal structure.   

To give a precise definition of an $n$-commutativity   we need to introduce a  measure of complexity of interleaving between  two maps of finite ordinals.        
\nl
Let  $\tau:[p]\to [m],\ \pi:[q]\rightarrow [m]$ be two maps in $\De.$ {\it A shuffling of the pair $\tau,\pi$} of length $n$ is a decompositions of their images into disjoint unions:
$$Im(\tau) = A_{1}\cup ...\cup A_{s}, \quad A_{1}<...<A_{s}$$
$$Im(\pi) = B_{1}\cup ...\cup B_{t}, \quad  B_{1}<...<B_{t}, \ s+t -1 = n$$
which satisfy either $$A_{1}\leq B_{1}\leq A_{2}\leq B_{2} \leq... \  ,$$
or 
$$ B_{1}\leq A_{1}\leq B_{2}\leq A_{2} \leq...  \  .$$
The sign $A \le B$ means here that any element of $A$ is less or equal to any element of $B.$
\nl
We say that the {\it linking number} of the pair $(\tau,\pi)$  is $n$ if $n$ is the minimal number for which there exists a shuffling of length $n.$ 
\nl
 A cosimplicial monoid $E:\De\to \bMon(\bV)$  is {\em $n$-commutative} if 
for any two maps between finite ordinals : $\tau:[p]\to [n],\ \pi:[q]\to [n]$ whose linking number is less than or equal to $n,$ the images of $E(\tau)$ and $E(\pi)$ commute in $E(n)$  (Definition \ref{dnc}).

Let $\bMon(\bV)_n^\De$ be the category of $n$-commutative cosimplicial monoids. We have an infinite sequence of inclusions:
$$\bCom(\bV)^\De = \bMon(\bV)_{\infty}^\De\to \ldots \to \bMon(\bV)_n^\De  \to \ldots \to \bMon(\bV)_0^\De =\bMon(\bV)^\De.$$
 Our first result about $n$-cosimplicial monoids relates this tower to the tower of categories of algebras of the little $n$-cubes operads. Namely, if $\bV$ is equipped with a model structure satisfying some mild technical conditions and $\delta:\De\to\bV$ is a standard object of simplices in $\bV$ 
the total complex $Tot_{\delta}(E)$ of a cosimplicial $n$-commutative monoid   in  $\bV$  has a natural $E_{n+1}$-algebra structure, that is a structure of  algebra over an operad  weakly equivalent to the little $(n+1)$-cubes operad  (Theorem \ref{main}). 
 
A nice feature of this $E_{n+1}$-algebra structure is that it is  very combinatorially explicit. This is related to the aforementioned fact that cosimplicial monoids are essentially set theoretical objects. We take  advantage of this nature and describe explicitly the main structural operations (Steenrod $\cup_i$-products and Poisson bracket) on the total complex of an $n$-commutative cosimplicial algebra $E$ (Theorem \ref{bracketE}) as signed linear combinations of certain explicit operations on $E$. The terms are compositions of cosimplicial coface maps and multiplication in $E$. The combinatorics selecting the terms is controlled by liftings of  complexity  $n$ of { smooth Delannoy paths}  on a {commutative rectangular lattice}.
\nl
A Dellanoy path on the commutative $p\times q$-rectangular lattice is a path, which starts at the point $(0,0)$ and ends at the point $(p+1,q+1)$,  such that every step on this path adds $1$ or $0$ to one (or both) of the coordinates.   That is only $NE$ directions of movement are allowed. Such a path is called smooth if it does not have right angle corners. A lifting of such path $\psi$ is a path on the {noncommutative $p\times q$ lattice}  descending to $\psi$ under the quotient map making the lattice commutative. Such noncommutative path has complexity $k$ if it  changes direction exactly $k$ times. 
\nl
For $n=2$ the formula for the bracket which we obtain reproduces the Gershtenhaber-type bracket found earlier in \cite{da,ye}. 

The set of Delannoy paths is a very rich combinatorial object which appears naturally in many areas of mathematics (see \cite{BS} for a survey and a long bibliography). To the extent of our knowledge this paper is the first appearance of  Delannoy paths in homotopy theory. 
\nl
We want to stress a rare and satisfactory feature of the construction - that the resulting formulas for $E_{n+1}$-brackets are very explicit - with all the terms and their signs controlled by a rather well understood combinatorics. 
\nl
The formulas we obtain seem to be new even for $n=\infty,$ that is for  total complexes of commutative algebras - a classical setting for Steenrod operations \cite{Mandell,ms,st}.  
\\

In the second part of the paper we deal with our main examples of $n$-commutative cosimplicial monoids, the deformation complexes of tensor functors and of tensor categories introduced by the second author in \cite{da,da0} and independently by Yetter and Crane in \cite{cy,ye}. 

We show that the deformation complex of a tensor functor has a natural structure of a 1-commutative cosimplicial monoid (Section \ref{dctf}) and that the deformation complex of the identity functor of a tensor category is 2-commutative  (Section \ref{dctc}). 
We also show that the corresponding brackets control the obstructions to extending the first order deformations (Sections \ref{dttf} and \ref{dttc}). 

Deformation complexes of symmetric categories and functors exhibit features of both $E_{n+1}$- (for $n=2,1$) and $E_{\infty}$-algebras (Section \ref{dcsf}). In addition to the brackets their cohomology possesses a Hodge-type decomposition.
After looking at the combinatorics of such symmetric cosimplicial monoids  (Section \ref{scm}) we get a partial result on the interplay of these two structures (Theorem \ref{pac}). 

We illustrate our results with examples coming from symmetric categories of representations of Lie algebras (Section \ref{exampl}). 
We show that in characteristic zero the 1-bracket on the cohomology of the forgetful functor is the classical Schouten bracket.
We also show that the 2-bracket on the cohomology of the identity functor is trivial in characteristic zero (Theorem \ref{2bt}). This of course is in the total agreement with the very general prediction of M. Kontsevich on deformations of identity morphisms \cite{ko0}.
In finite characteristic the 2-bracket is non-trivial (Example \ref{nte}). We are planning to examine the case of finite characteristic systematically in a future work.
 
 \subsection*{Acknowledgment}

This paper had a rather long life. The work started in 2016, when the authors met in Max Planck Institute for Mathematics (Bonn, Germany) at the Program on Higher Structures in Geometry and Physics.
It is due to the ineffectiveness of the second author in managing his ever increasing load that it took four years to do the final polishing.
The authors  thank the Max Planck Institute for the opportunity to work together. 
The first author would also like to thank L'IHES for hospitality during January-February of 2020, crucial for  completion of the paper. 
The second author is partially supported by the Simons Foundation. 
The authors are grateful to Andr\'e Joyal and Maxim Kontsevich for illuminating discussions and explanations and Andrey Lazarev and Martin Markl for useful references.

\section{Cosimplicial monoids}

\subsection{Cosimplicial monoids and paths operads}

Let $\De$ be the category of nonempty finite ordered sets with  nondecreasing maps. 
Denote by $[n]=\{0,1,...n\}$ the ordered set of $n+1$ elements.
Denote by ${\partial}_{n}^{i}:[n]\to [n+1]$  the increasing monomorphism, which does not take the value $i\in [n+1]$. 
Denote by ${\sigma}_{n}^{i}:[n]\to [n-1]$  the nondecreasing epimorphism, which takes twice the value $i\in [n]$.
Graphically ${\partial}_{n}^{i}$ and ${\sigma}_{n}^{i}$ are 

$$\xygraph{
!{<0cm,0cm>;<.7cm,0cm>:<0cm,1cm>::}
!{(-2,2)}*{\circ} ="0" !{(-1,2)}*{...} !{(0,2)}*{\circ} ="1"  !{(1,2)}*{\circ}="2"  !{(2,2)}*{...}  !{(3,2)}*{\circ}="3"
!{(-2.5,0)}*{\circ}="d0" !{(-1.5,0)}*{...} !{(-.5,0)}*{\circ} ="d1" !{(0.5,0)}*{\circ}  !{(1.5,0)}*{\circ}="d2"  !{(2.5,0)}*{...}  !{(3.5,0)}*{\circ}="d3"
!{(-2,2.25)}*+{\scriptstyle 0}  !{(0,2.25)}*+{\scriptstyle i-1}  !{(1,2.25)}*+{\scriptstyle i}   !{(3,2.25)}*+{\scriptstyle n} 
!{(-2.5,-.23)}*+{\scriptstyle 0}  !{(-.5,-.23)}*+{\scriptstyle i-1}  !{(.5,-.23)}*+{\scriptstyle i}   !{(1.5,-.23)}*+{\scriptstyle i+1}  !{(3.5,-.23)}*+{\scriptstyle n+1} 
%%%%%%%
"0":"d0"     "1":"d1"  "2":"d2"  "3":"d3"
}\qquad\qquad\qquad
\xygraph{
!{<0cm,0cm>;<.7cm,0cm>:<0cm,1cm>::}
!{(-2,2)}*{\circ} ="0" !{(-1,2)}*{...} !{(0,2)}*{\circ} ="1"  !{(1,2)}*{\circ}="2"  !{(2,2)}*{...}  !{(3,2)}*{\circ}="3"
!{(-1.5,0)}*{\circ}="d0" !{(-.5,0)}*{...} !{(.5,0)}*{\circ} ="d1" !{(1.5,0)}*{...}  !{(2.5,0)}*{\circ}="d3"
!{(-2,2.25)}*+{\scriptstyle 0}  !{(0,2.25)}*+{\scriptstyle i}  !{(1,2.25)}*+{\scriptstyle i+1}   !{(3,2.25)}*+{\scriptstyle n} 
!{(-1.5,-.23)}*+{\scriptstyle 0}  !{(.5,-.23)}*+{\scriptstyle i}  !{(2.5,-.23)}*+{\scriptstyle n-1}   
%%%%%%%
"0":"d0"     "1":"d1"  "2":"d1"  "3":"d3"
}$$
correspondingly.

Let $\Cat$ be the category of small categories. We say that an object $m$ of a small category $A$ is {\it weakly  initial} if the Hom set $A(m,a)$ is nonempty for any $a\in A.$   
Dually $m\in A$ is {\it weakly terminal} if it is weakly initial in $A^{op}.$ 

Let $\Cat_{*,*}$ be the subcategory of $\Cat$ whose objects are small categories with distinguished weakly initial and weakly terminal objects and whose morphisms are functors which preserve these objects.    
\nl
For each pair $0\le  i < j $ the  finite linear poset
$ i <  i+1 < \ldots < j $ 
freely generates an object $\langle i, j\rangle$ of $\Cat_{*,*}$ (called an {\it interval}) with the initial object $i$ and the terminal object $j.$  We will denote by $\bar{k}: k\to k+1$ a generating morphism of this category. The interval $\langle 0, n\rangle$ will be denoted simply by $\langle n \rangle .$
 
The full subcategory of $\Cat_{*,*}$ spanned by the categories $\langle n \rangle \ , \ n\ge 1$ is called the {\em category of intervals} $\mathbf{Int}.$
\nl
The {\it Joyal's duality} is the isomorphism of categories
$$\widetilde{(\ )}:\De \to \mathbf{Int}^{op}, \qquad \widetilde{[n]} = \langle n+1\rangle. $$
We will need an explicit description of the effect of this isomorphism on morphisms of $\De.$ 
\nl
Let $\phi: [m]\to [l]$ be a morphism in $\De.$ We define a map $\widetilde{\phi}: \langle l+1\rangle \to \langle m+1\rangle$ as a functor which on the generator $\bar{i}: i\to i+1$
is equal to $m_i\to M_i +1$ provided $i \in Im(\phi)$ with $m_i = min\{j\in \phi^{-1}(i)\}$ and $M_i = max\{j\in \phi^{-1}(i)\}.$ If $i$ is not in the image of $\phi$ we put $\widetilde{\phi}(\bar{i}) = id.$

\begin{example}
Here is a picture of the Joayl dual of a map $\phi:[3]\to[2]$ (in solid arrows)
$$\xygraph{
!{<0cm,0cm>;<1cm,0cm>:<0cm,1cm>::}
!{(1,2)}*{\bullet}="1" !{(2,2)}*{\circ}="2" !{(3,2)}*{\bullet}="3" !{(4,2)}*{\circ}="4"  !{(5,2)}*{\bullet}="5" !{(6,2)}*{\circ}="6" 
!{(7,2)}*{\bullet}="7" !{(8,2)}*{\circ}="8"  !{(9,2)}*{\bullet}="9"  
!{(2,0)}*{\bullet}="d1" !{(3,0)}*{\circ}="d2" !{(4,0)}*{\bullet}="d3" !{(5,0)}*{\circ}="d4"  !{(6,0)}*{\bullet}="d5" !{(7,0)}*{\circ}="d6"  !{(8,0)}*{\bullet}="d7" 
%!{(-2,2.25)}*+{\scriptstyle 0}  !{(0,2.25)}*+{\scriptstyle i-1}  !{(1,2.25)}*+{\scriptstyle i}   !{(3,2.25)}*+{\scriptstyle n} 
%!{(-2.5,-.23)}*+{\scriptstyle 0}  !{(-.5,-.23)}*+{\scriptstyle i-1}  !{(.5,-.23)}*+{\scriptstyle i}   !{(1.5,-.23)}*+{\scriptstyle i+1}  !{(3.5,-.23)}*+{\scriptstyle n+1} 
%%%%%%%
"2":"d4"  "4":"d6"  "6":"d6"   "8":"d6" 
"d1":@{..>}"1"   "d3":@{..>}"1"  "d5":@{..>}"3"  "d7":@{..>}"9" 
}$$
with the intervals $\langle 4\rangle$ (at the top) and $\langle 3\rangle$ (at the bottom) represented by solid dots
and the Joyal dual map $\widetilde{\phi}:\langle 3\rangle\to\langle 4\rangle$ (in dotted arrows) going upwards.
\end{example}
\bigskip

Let $\bV = (\bV,\otimes,I)$ be a symmetric monoidal category. Let $\bMon(\bV)$ be the category of monoids in $\bV$ and  $\bCom(\bV)$ to be the category of commutative monoids in $\bV.$

\begin{defin} A {\em cosimplicial monoid} in $\bV$ is a  cosimplicial object in the category of monoids in $\bV$ that is a functor 
$$E:\De \longrightarrow  \bMon(\bV), \ E(n) = E([n]).$$
 A {\em commutative cosimplicial monoid} is a cosimplicial object in the category $\bCom(\bV).$ 
\end{defin}

We are going to construct a coloured operad $\MM$ in $\bSet$ whose algebras are cosimplicial commutative monoids. 
First observe that  for  any $n_1,\ldots,n_k \ge 0$ the cartesian product in $\Cat$  $\langle n_1 +1\rangle \times\cdots\times\langle n_k +1\rangle $ has unique initial and terminal objects. 

\begin{defin} The {\em paths operad} $\MM$ has natural numbers as colours. The set of operations is
$$\MM(n_1,\ldots, n_k; n)=\Cat_{*,*}(\langle n+1\rangle ,\langle n_1 +1\rangle \times\cdots\times\langle n_k +1\rangle ),$$
and the operad substitution maps being induced by cartesian product and composition in $\Cat_{*,*}.$ 
%\nl
%Operations of $\MM$ will be called {\em paths}.
\end{defin}

\begin{example} An element of $\MM(p,q;m)$ can be understood as a path (possibly with some ``stops")  in a commutative $(p+1) \times (q +1)$ lattice which goes from $(0,0)$ to $(p+1,q+1)$ along north to east directions.

$$\xygraph{
!{<0cm,0cm>;<0.6cm,0cm>:<0cm,0.6cm>::}
!{(-1,3)}*{\circ}  !{(0,3)}*{\circ}   !{(1,3)}*{\circ}  !{(2,3)}*{\circ} !{(3,3)}*{\circ}   !{(4,3)}*{\circ}  !{(5,3)}*{\bullet}="5"
!{(-1,2)}*{\circ}  !{(0,2)}*{\circ}   !{(1,2)}*{\circ}  !{(2,2)}*{\circ} !{(3,2)}*{\circ}   !{(4,2)}*{\circ}  !{(5,2)}*{\circ}
!{(-1,1)}*{\circ}  !{(0,1)}*{\circ}  !{(1,1)}*{\circ}  !{(2,1)}*{\circ} !{(3,1)}*{\bullet}="3"  !{(4,1)}*{\bullet}="4"  !{(5,1)}*{\circ}
!{(-1,0)}*{\circ}  !{(0,0)}*{\circ}   !{(1,0)}*{\circ}  !{(2,0)}*{\circ}  !{(3,0)}*{\circ}   !{(4,0)}*{\circ}  !{(5,0)}*{\circ}
!{(-1,-1)}*{\bullet} ="1" !{(0,-1)}*{\circ}   !{(1,-1)}*{\circ}  !{(2,-1)}*{\bullet}="2" !{(3,-1)}*{\circ}   !{(4,-1)}*{\circ}  !{(5,-1)}*{\circ}
!{(2.7,1.3)}*+{\scriptstyle 2}  %!{(0,.2)}*+{\scriptstyle 1} !{(1.2,1)}*+{\scriptstyle 1}  !{(2,2.2)}*+{\scriptstyle 1}
%!{(-1.15,.15)}*+{\scriptstyle 0}  !{(1.15,-.15)}*+{\scriptstyle 0}  !{(.85,2.15)}*+{\scriptstyle 0}
%%%%%%%
"1":"2"     "2":"3"  "3":"4"  "4":"5"  
}$$
In this picture the corresponding path is $\phi:\langle 5 \rangle \to \langle 6\rangle \times \langle 4 \rangle.$ Black dots correspond to images of  objects of the interval $\langle 5 \rangle.$ The parts of the path between dots correspond to  images of the generating morphisms in $\langle 5 \rangle.$ The label $2$ near the middle dot indicates that the generator $\bar{2}$ is mapped by $\phi$ to the identity morphism of $(5,2)$ (so, the preimage $\phi^{-1}(id_{(5,2)}) = \{id_2,\bar{2},id_3\}$.) Since the number of different  identities in this preimage is $2$ we will say that the path has two stops at this point.    The dots without labels are those objects $(x,y)$ for which $\phi^{-1}(id_{(x,v)})$ is a singleton. We will say that the path has one stop in these points. This path has $0$-stops in  all other points.

\end{example}

\begin{thm}\label{bvcom} The category of algebras of $\MM$ in any cocomplete symmetric monoidal category $(\bV,\otimes,I)$ is isomorphic to the category of cosimpicial commutative monoids in $\bV.$
\end{thm}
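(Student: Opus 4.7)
The plan is to reduce the theorem to an elementary observation about the universal property of the cartesian product in $\Cat_{*,*}$, combined with Joyal's duality. In $\Cat_{*,*}$ the distinguished objects of a product are formed componentwise, so a morphism into a product is determined by its projections. This gives a natural bijection
\[
\MM(n_1,\ldots,n_k;n) \;=\; \Cat_{*,*}\!\left(\langle n+1\rangle,\, \textstyle\prod_{i}\langle n_i+1\rangle\right) \;\cong\; \prod_{i=1}^{k}\Cat_{*,*}(\langle n+1\rangle,\langle n_i+1\rangle) \;\cong\; \prod_{i=1}^{k}\De([n_i],[n]),
\]
where the last isomorphism uses Joyal's duality. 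Thus an element of $\MM(n_1,\ldots,n_k;n)$ is simply a $k$-tuple $(\phi_1,\ldots,\phi_k)$ of morphisms $\phi_i\colon [n_i]\to[n]$ in $\De$. Under this identification, the operadic composite $\phi\circ_i\psi$ has $j$-th projection equal to $\psi_j\circ\phi_i$ in $\Cat_{*,*}$, which under Joyal duality becomes $\tilde\phi_i\circ\tilde\psi_j$ in $\De$ (the order reversing because the duality is contravariant).

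Using this identification, I would first construct a functor $F$ from $\MM$-algebras to cosimplicial commutative monoids. Given an $\MM$-algebra $\{X_n\}$, the unary operations ($k=1$) assign to each $\phi\in\De([m],[n])$ a morphism $E(\phi)\colon X_m\to X_n$, and the unit and associativity axioms of the operad make $E\colon \De\to\bV$ into a functor with $E(n)=X_n$. The ``constant diagonal'' operation $(\mathrm{id}_{[n]},\ldots,\mathrm{id}_{[n]})\in\MM(n,\ldots,n;n)$ gives maps $\mu_n^{(k)}\colon E(n)^{\otimes k}\to E(n)$, whose associativity, unitality and $S_k$-equivariance are immediate from the operad axioms and yield a commutative monoid structure on each $E(n)$. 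Operadic composition of the multiplication with a unary operation then forces $E(\phi)$ to be a monoid homomorphism. The nullary set $\MM(\,;n)$, being a singleton, provides the unit.

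Conversely, I would define a functor $G$ sending a cosimplicial commutative monoid $E$ to the $\MM$-algebra whose $(\phi_1,\ldots,\phi_k)$-action is the composite
\[
E(n_1)\otimes\cdots\otimes E(n_k) \xrightarrow{E(\phi_1)\otimes\cdots\otimes E(\phi_k)} E(n)^{\otimes k} \xrightarrow{\mu_n^{(k)}} E(n).
\]
Commutativity of $E(n)$ yields $S_k$-equivariance; functoriality of $E$ together with the fact that cosimplicial maps are monoid homomorphisms yields compatibility with operadic composition, using the Joyal-compatibility statement from paragraph one; associativity and unitality of $\mu_n$ take care of the remaining axioms.

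The only genuine thing to check is that $F$ and $G$ are mutually inverse and that $G$ does define an operad action; both are routine unwindings once the bijection of the first paragraph is in place. The main obstacle, such as it is, is bookkeeping: keeping track of how Joyal's contravariance interacts with the operadic $\circ_i$. Conceptually, the theorem expresses that $\MM$ is the ``$\De$-coloured commutative operad'': its algebras in $\bV$ coincide with commutative monoids in the functor category $\bV^\De$ equipped with pointwise tensor, which in turn are just functors $\De\to\bCom(\bV)$. The path-in-a-lattice description of $\MM$ is a pleasant combinatorial packaging of this essentially formal fact.
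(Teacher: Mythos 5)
Your argument is correct, and it hinges on the same combinatorial fact as the paper's proof: the identification $\MM(n_1,\ldots,n_k;n)\cong\prod_{i}\Cat_{*,*}(\langle n+1\rangle,\langle n_i+1\rangle)\cong\prod_i\De([n_i],[n])$ coming from the universal property of the cartesian product (whose distinguished objects are formed componentwise) together with Joyal duality. Where you diverge is in the surrounding architecture. The paper does not construct the equivalence by hand; it invokes the general theorem of Day and Street that the algebras of any $\bSet$-coloured operad over a fixed underlying category coincide with the commutative monoids for the associated convolution multitensor on covariant presheaves, and then reduces the theorem to a coend computation (Fubini plus Yoneda) showing that this multitensor is the pointwise tensor product on $\bV^{\De}$. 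You instead build the two functors $F$ and $G$ explicitly and verify the operad and monoid axioms directly, which is entirely legitimate but pushes the real work into the ``routine unwindings'' you defer (mutual inverseness, compatibility of $G$ with $\circ_i$ under the contravariance of Joyal duality); these do check out. The trade-off is that the paper's route is essentially a one-line proof modulo the cited machinery and, more importantly, is reused immediately afterwards: Theorem \ref{bvass} is proved by observing that the convolution for $\MM^{(0)}=\MM\times\A ss$ is the symmetrisation of that for $\MM$, and the remark following it generalises both statements to $C\otimes_{BV}\C om$ and $C\otimes_{BV}\A ss$ for an arbitrary small category $C$ of unary operations. Your elementary argument would have to be redone (or at least re-examined) in each of those cases, whereas it has the virtue of being self-contained and of not presupposing familiarity with multitensors or Day--Street convolution.
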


\begin{proof} Observe that the underlying category of $\MM$ is  isomorphic to $\De$ by Joyal duality.  Any coloured operad with value in a cocomplete symmetric monoidal category induces a symmetric multitensor structure (standard Day-Sreet convolution)
on the category of covariant presheaves on underlying category. The category of algebras of the operad $\MM$ is isomorphic  to the category of commutative monoids with respect to this multitensor structure  \cite{DS2}   .
%We are going to show that this multitensor structure is the pointwise tensor structure of $\V^{\Delta}$.

In the case of $\MM$ we have  a multitensor on $\bV^{\De}$ 
$$\xi_k: (\bV^\De)^k \to \bV^{\De}$$
given by the  coend formula
$$\xi_k(X_1,\ldots,X_k)(n) = \int^{n_1,\ldots,n_k\in\De}\MM(n_1,\ldots,n_k;n)\otimes X_1(n_1)\otimes\ldots\otimes X_k(n_k).$$ 
Now last coend is equal to 
$$\int^{n_1,\ldots,n_k\in\De}\Cat_{*,*}(\langle n+1\rangle ,\langle n_1 +1\rangle \times\cdots\times\langle n_k +1\rangle )\otimes X_1(n_1)\otimes\ldots\otimes X_k(n_k) \simeq  $$ 
$$\int^{n_1,\ldots,n_k\in\De}\Cat_{*,*}(\langle n+1\rangle,\langle n_1 +1\rangle)\times\cdots\times \Cat_{*,*}(\langle n+1\rangle,\langle n_k +1\rangle)\otimes X_1(n_1)\otimes\ldots\otimes X_k(n_k) \simeq $$
$$ \int^{n_1,\ldots,n_k\in\De}(\De([n_1],[n])\times\cdots\times \De([n_k],[n]))\otimes X_1(n_1)\otimes\ldots\otimes X_k(n_k)$$
Using the fact that the tensor product $\otimes$ commutes with colimits on both sides along with Fubini theorem for coends and Yoneda lemma we see that the last coend is isomorphic to $X_1(n)\otimes\ldots\otimes X_k(n).$ Hence, the Day-Street convolution is equal to the pointwise tensor product of cosimplicial objects. Commutative monoids with respect to this multitensor structure are exactly commutative cosimplicial  monoids in $\bV.$ 
\end{proof}

More explicitly, writing (by Joyal duality) a functor $f\in \Cat_{*,*}(\langle n+1\rangle,\langle n_1 +1\rangle\times\cdots\times\langle n_k +1\rangle)$ as a collection of functors $f_i \in \Cat_{*,*}(\langle n+1\rangle ,\langle n_i +1\rangle) \simeq \De([n_i],[n])$ one can see that an element of $\MM(n_1,\ldots, n_k; n)$ corresponds to the following operation on the underlying collection of a  commutative cosimplicial monoid $E:\De\to \bMon$
$$\xymatrix{E(n_1)\otimes\ldots\otimes E(n_k) \ar[rrr]^{E(f_1)\otimes\ldots\otimes E(f_k)} &&&  E(n)\otimes\ldots\otimes E(n) \ar[r]^(.67){\mu_n} &  E(n)}$$
where $E(f_i):E(n_i)\to E(n)$ is the cosimplicial map corresponding to $f_i$ and $\mu_n$ is the product of the monoid $E(n)$. 
\\

Let now $\MM^{(0)} = \MM\times \A ss$ be the product in the category of symmetric colored operads, where $\A ss$ is the one colored $\bSet$-operad for monoids. 
By definition
$$\MM^{(0)}(n_1,\ldots, n_k; n)=\Cat_{*,*}(\langle n+1\rangle ,\langle n_1 +1\rangle \times\cdots\times\langle n_k +1\rangle )\times \Sigma_k ,$$
and the operadic composition is induced by the operadic composition in $\MM$ by the first variable and the operadic composition on symmetric groups $\Sigma_k$ in the second variable. 

\begin{thm}\label{bvass} The category of algebras of $\MM^{(0)}$ in any cocomplete symmetric monoidal category $\bV$ is isomorphic to the category of cosimpicial  monoids in $\bV.$ The natural projection $\MM^{(0)}\to \MM$ is an operadic morphism which induces the forgetful functor from commutative cosimplicial  monoids to cosimplicial  monoids.
\end{thm}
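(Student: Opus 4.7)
The plan is to mirror the proof of Theorem \ref{bvcom} step by step, inserting the extra $\Sigma_k$-factor appearing in the definition of $\MM^{(0)}$. First, the underlying categories of $\MM^{(0)}$ and $\MM$ coincide, since $\Ass(1)=\{*\}$ and $\Sigma_1$ is trivial; both therefore induce a Day-Street symmetric multitensor on the same presheaf category $\bV^\De$.

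Next I would compute the multitensor for $\MM^{(0)}$. The coend
$$\xi^{(0)}_k(X_1,\ldots,X_k)(n) = \int^{n_1,\ldots,n_k\in\De}\bigl(\MM(n_1,\ldots,n_k;n)\times \Sigma_k\bigr)\otimes X_1(n_1)\otimes\cdots\otimes X_k(n_k)$$
splits: the set $\Sigma_k$ is constant in the coend variables and pulls out, and the remaining coend is exactly the one already evaluated in the proof of Theorem \ref{bvcom}. The outcome is $\coprod_{\Sigma_k}\bigl(X_1(n)\otimes\cdots\otimes X_k(n)\bigr)$, namely the pointwise tensor product with an additional free $\Sigma_k$-factor.

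By the general Day-Street correspondence invoked in Theorem \ref{bvcom}, algebras of $\MM^{(0)}$ are commutative monoids for this symmetric multitensor. The key observation is that the freeness of the $\Sigma_k$-factor cancels the commutativity constraint: commutative monoids for a multitensor of the shape $\coprod_{\Sigma_k}(-)$ are precisely (non-commutative) associative monoids for the underlying pointwise tensor product. This is the coloured analogue of the one-coloured identity $\bCom\times\Ass=\Ass$. Hence $\MM^{(0)}$-algebras are cosimplicial monoids in $\bV$.

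For the second statement, the projection $\MM^{(0)}\to\MM$ is the categorical projection from a product of operads, and so is automatically an operadic morphism. Restriction along it sends a commutative cosimplicial monoid (regarded as an $\MM$-algebra) to the same object viewed as an $\MM^{(0)}$-algebra, which on the nose matches the forgetful functor. The main obstacle is the cancellation step above: verifying carefully that the free $\Sigma_k$-action on $\xi^{(0)}$ interacts with its symmetric structure so as to trivialize the commutativity axiom. This is a general feature of tensoring a symmetric operad with $\Ass$ and should be routine once the Day-Street machinery is set up.
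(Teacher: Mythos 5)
Your proposal is correct and follows essentially the same route as the paper: the authors simply observe that the Day--Street convolution for $\MM^{(0)}$ is the symmetrisation of the one for $\MM$ (citing Day--Street for the fact that commutative monoids over a symmetrised multitensor are plain monoids over the original), which is exactly the computation and cancellation step you spell out. Your version just makes the coend calculation and the $\coprod_{\Sigma_k}$ splitting explicit.
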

\begin{proof} It is not hard to see that Day-Street convolution for $\MM^{(0)}$ is the symmetrisation of Day-Street convolution for $\MM$ 
(see \cite[p.61]{DS2}). % \ {\color{red} AD} \ : need a reference for symmetrisation
So, the result follows.
\end{proof}

For example, if $E$ is a cosimplicial monoid in abelian groups (that is a cosimplicial ring) then an element  $\alpha = (\phi,\sigma) = (f_1,...,f_k;\sigma) \in\MM^{(0)}(n_1,\ldots, n_k; n)$ corresponds to the following operation:  $$\alpha(-):E(n_1)\otimes\ldots\otimes E(n_k)\to E(n),$$
\begin{equation}\label{action} \alpha(x_1\otimes \ldots\otimes x_k)= E(f_{\sigma^{}(1)})(x_{\sigma^{}(1)})\cdot\ldots\cdot E(f_{\sigma^{}(k)})(x_{\sigma^{}(k)})\  , \end{equation}
where $\cdot$ is the multiplication in $E(n).$

\begin{remark} It is clear from the description of algebras of $\MM$ and $\MM^{(0)}$ that
$$\MM = \Delta \otimes_{BV} \C om\qquad\qquad
\MM^{0} = \Delta \otimes_{BV} \A ss$$ 
and the map $q: \MM^{(0)}\to \MM$ is $1\otimes_{BV} \eta$ where $\eta: \A ss\to \C om$ is the canonical morphism of operads.
\end{remark}
Here 
  $\otimes_{BV} $ is the Boardman-Vogt tensor product \cite{bv}. In fact, Theorems \ref{bvcom} and \ref{bvass} admit an obvious generalisation for computing $C\otimes_{BV} \C om $ and $C\otimes_{BV} \A  ss$ for an arbitrary small category $C,$ considered as an operad with unary operations only.

\subsection{Linking numbers and $n$-commutative cosimplicial  monoids}

Let  $\tau:[p]\to [m],\ \pi:[q]\rightarrow [m]$ be two maps in $\De.$ {\it A shuffling of $\tau,\pi$ of length $n$} is 
a pair of decompositions of their images into disjoint unions:
$$Im(\tau) = A_{1}\cup ...\cup A_{s}, \quad A_{1}<...<A_{s}$$
$$Im(\pi) = B_{1}\cup ...\cup B_{t}, \quad  B_{1}<...<B_{t}, \ s+t = n+1$$
which satisfy one of the inequalities (of ordered sets)
$$A_{1}\leq B_{1}\leq A_{2}\leq B_{2} \leq... \  ,$$
or 
$$ B_{1}\leq A_{1}\leq B_{2}\leq A_{2} \leq...  \  .$$
A choice of one of the inequalities is a part of the shuffling. 
\nl
Note also that in general $|s-t|\le 1$,  so the last term in any decomposition is either $B_t$ or $A_s.$

\begin{example}\label{id} Let $\tau = \pi = id:[2]\to [2].$ Here are all shufflings of this pair.  %\ {\color{red} AD} \  hope I haven't miss any
\begin{enumerate} 
\item 
$\quad A_1= \{0\},\quad A_2=\{1\},\quad  A_3=\{2\},\qquad\qquad B_1= \{0\},\quad B_2=\{1\},\quad  B_3=\{2\},$
$$A_{1}\ \leq\ B_{1}\ \leq\ A_{2}\ \leq\ B_{2}\ \leq\ A_3\ \leq\ B_3\ .$$

\item 
$\quad A_1= \{0\},\quad A_2=\{1\},\quad  A_3=\{2\},\qquad\qquad B_1= \{0\},\quad B_2=\{1\},\quad  B_3=\{2\},$
$$B_{1}\ \leq\  A_{1}\ \leq\  B_{2}\ \leq\  A_{2} \ \leq\  B_3\ \leq\  A_3\ .$$

\item 
$\quad A_1= \{0\},\quad A_2=\{1\},\quad  A_3=\{2\},\qquad\qquad B_1= \{0,1\},\quad  B_2=\{2\},$ 
$$A_{1}\ \leq\  B_{1}\ \leq\  A_{2}\ \leq\  B_{2} \ \leq\  A_3\ .$$

\item 
$\quad A_1= \{0,1\},\quad  A_3=\{2\},\qquad\qquad B_1= \{0\},\quad B_2=\{1\},\quad  B_3=\{2\},$
$$B_{1}\ \leq\  A_{1}\ \leq\  B_{2}\ \leq\  A_{2} \ \leq\  B_3\ \ .$$

\item 
$\quad A_1= \{0\},\quad A_2=\{1\},\quad  A_3=\{2\},\qquad\qquad B_1= \{0\},\quad B_2=\{1,2\},$ 
$$A_{1}\ \leq\  B_{1}\ \leq\  A_{2}\ \leq\  B_{2} \ \leq\  A_3\ .$$

\item 
$\quad A_1= \{0\},\quad A_2=\{1,2\},\qquad\qquad B_1= \{0\},\quad B_2=\{1\},\quad  B_3=\{2\},$
$$B_{1}\ \leq\  A_{1}\ \leq\  B_{2}\ \leq\  A_{2} \ \leq\  B_3\ \ .$$

\item 
 $\quad A_1= \{0\},\quad A_2=\{1,2\}, \qquad\qquad B_1= \{0,1\},\quad B_2=\{2\}\ ,$ 
$$A_{1}\ \leq\  B_{1}\ \leq\  A_{2}\ \leq\  B_{2}.$$

\item  
$\quad A_1= \{0,1\},\quad A_2=\{2\}, \qquad\qquad B_1= \{0\},\quad B_2=\{1,2\}\ ,$
$$B_{1}\ \leq\  A_{1}\ \leq\  B_{2}\ \leq\  A_{2}\ .$$
\end{enumerate} 

\end{example}

\begin{defin} 
We say that the {\em linking number} $\link(\tau,\pi)$ of two nondecreasing maps $\tau:[p]\to [m],\ \pi:[q]\rightarrow [m]$ is equal to $n$, if $n$ is the smallest number for which there exists a shuffling of $\tau$ and $\pi$ of length $n.$
\end{defin}

Note that  for any  shuffling of $\tau$ and $\pi$ the cardinality of the intersection of their images is bounded as follows 
$|Im(\tau)\cap Im(\pi)|\ \leq s+t-1$ and, hence, 
$$|Im(\tau)\cap Im(\pi)|\ \leq \link(\tau,\pi)\ .$$  

Observe also, that the linking number depends only on the images of the morphisms $\pi$ and $\tau.$ 
That is we have the following. 
\begin{lem}\label{epimono} Let $\tau:[p]\to [m],\ \pi:[q]\rightarrow [m]$ be two maps in $\De$ and let
$$[p]\to [p']\stackrel{\tau'}{\to} [m],\qquad [q]\rightarrow [q'] \stackrel{\pi'}{\to}[m]$$
be their respective epi-mono factorisations. Then
$$\link(\tau,\pi) = \link(\tau',\pi').$$
\end{lem}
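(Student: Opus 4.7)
The plan is to observe that the definition of a shuffling of a pair of maps $\tau, \pi$ with common codomain $[m]$ refers \emph{only} to the images $Im(\tau)$ and $Im(\pi)$ (as ordered subsets of $[m]$), together with the ordering inequalities between their parts. The source objects $[p], [q]$ play no role beyond determining what these images are. So once one shows that the images are preserved under epi-mono factorisation, the two sets of shufflings coincide literally, and hence so do their minimal lengths.

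More concretely, I would proceed as follows. First, recall that in the epi-mono factorisation $\tau = \tau'\circ e$ with $e:[p]\twoheadrightarrow [p']$ and $\tau':[p']\hookrightarrow[m]$, the map $e$ is surjective, so $Im(\tau) = \tau'(e([p])) = \tau'([p']) = Im(\tau')$. The same applies to $\pi$ and $\pi'$. Second, unravel the definition of a shuffling of length $n$: it is a pair of ordered decompositions
\[
Im(\tau) = A_1\cup\cdots\cup A_s, \qquad Im(\pi) = B_1\cup\cdots\cup B_t,
\]
with $s+t=n+1$, subject to one of the two alternating inequality patterns. Every piece of this data is determined by the pair of subsets $(Im(\tau), Im(\pi))$ of $[m]$. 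Since $(Im(\tau),Im(\pi)) = (Im(\tau'),Im(\pi'))$, the set of shufflings of $(\tau,\pi)$ of length $n$ is tautologically equal to the set of shufflings of $(\tau',\pi')$ of length $n$.

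Taking $n$ to be the minimum length for which such a shuffling exists then gives $\link(\tau,\pi) = \link(\tau',\pi')$, which is the claim. There is essentially no obstacle here: the argument is a direct unpacking of the definition, and the only substantive input is the elementary fact that the image of a morphism in $\Delta$ equals the image of the mono in its epi-mono factorisation. I would present the proof in at most a few sentences, possibly combining it with the remark just before the lemma that the linking number depends only on images.
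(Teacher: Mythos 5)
Your proof is correct and follows exactly the reasoning the paper intends: the paper offers no separate proof of this lemma, treating it as immediate from the preceding remark that the linking number depends only on the images of the two maps, which is precisely the observation you make and justify via $Im(\tau)=Im(\tau')$.
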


\bex\lb{elno}
The monomorphism $\tau_{m,n}:[n]\to [m+n]$, which does not take the values $n+1,...,n+m$ and the monomorphism
$\pi_{m,n}:[m]\rightarrow [m+n]$, which does not take the values $0,...,m-1$ have the linking number one. Graphically $\tau_{m,n}$ and $\pi_{m,n}$ are
$$\xygraph{
!{<0cm,0cm>;<.7cm,0cm>:<0cm,1cm>::}
!{(-1,2)}*{\circ} ="0" !{(0,2)}*{...} !{(1,2)}*{\circ} ="1"  
!{(-2.5,0)}*{\circ}="d0" !{(-1.5,0)}*{...} !{(-.5,0)}*{\circ} ="d1" !{(0.5,0)}*{\circ}  !{(1.5,0)}*{...}  !{(2.5,0)}*{\circ}
!{(-1,2.25)}*+{\scriptstyle 0}  !{(1,2.25)}*+{\scriptstyle n}   
!{(-2.5,-.23)}*+{\scriptstyle 0}  !{(-.5,-.23)}*+{\scriptstyle n}  !{(.5,-.23)}*+{\scriptstyle n+1}  !{(2.5,-.23)}*+{\scriptstyle n+m} 
%%%%%%%
"0":"d0"     "1":"d1"  
}\qquad\qquad\qquad
\xygraph{
!{<0cm,0cm>;<.7cm,0cm>:<0cm,1cm>::}
!{(-1,2)}*{\circ} ="0" !{(0,2)}*{...} !{(1,2)}*{\circ} ="1"  
!{(-2.5,0)}*{\circ} !{(-1.5,0)}*{...} !{(-.5,0)}*{\circ}  !{(0.5,0)}*{\circ}="d0"  !{(1.5,0)}*{...}  !{(2.5,0)}*{\circ}="d1"
!{(-1,2.25)}*+{\scriptstyle 0}  !{(1,2.25)}*+{\scriptstyle m}   
!{(-2.5,-.23)}*+{\scriptstyle 0}  !{(-.5,-.23)}*+{\scriptstyle n-1}  !{(.5,-.23)}*+{\scriptstyle n}  !{(2.5,-.23)}*+{\scriptstyle n+m} 
%%%%%%%
"0":"d0"     "1":"d1"  
}$$
correspondingly. 
\nl
Note that $\tau_{m,n}$ can be written as the composite $\partial^{n+m}_{n+m-1}...\partial^{n+2}_{n+1}\partial^{n+1}_n$, while $\pi_{m,n}$ coincides with $\partial^{n-1}_{m+n-1}...\partial^1_{m+1}\partial^0_m$.
\eex

\bex\lb{elnt}
The monomorphism $\tau_{m,n}^i:[n]\to [m+n-1]$, which does not take the values $i+1,...,i+m-1$ and the monomorphism
$\pi_{m,n}^i:[m]\rightarrow [m+n-1]$, which does not take the values $0,...,i-1$ and $i+m+1,...,m+n-1$ have the linking number two. Graphically $\tau_{m,n}^i$ and $\pi_{m,n}^i$ are
$$\xygraph{
!{<0cm,0cm>;<.7cm,0cm>:<0cm,1cm>::}
!{(-2,2)}*{\circ} ="0" !{(-1,2)}*{...} !{(0,2)}*{\circ} ="1"  !{(1,2)}*{\circ}="2"  !{(2,2)}*{...}  !{(3,2)}*{\circ}="3"
!{(-2.5,0)}*{\circ}="d0" !{(-1.5,0)}*{...} !{(-.5,0)}*{\circ} ="d1" !{(.5,0)}*{...}  !{(1.5,0)}*{\circ}="d2"  !{(2.5,0)}*{...}  !{(3.5,0)}*{\circ}="d3"
!{(-2,2.25)}*+{\scriptstyle 0}  !{(0,2.25)}*+{\scriptstyle i}  !{(1,2.25)}*+{\scriptstyle i+1}   !{(3,2.25)}*+{\scriptstyle n} 
!{(-2.5,-.23)}*+{\scriptstyle 0}  !{(-.5,-.23)}*+{\scriptstyle i}     !{(1.5,-.23)}*+{\scriptstyle i+m}  !{(3.5,-.23)}*+{\scriptstyle n+m-1} 
%%%%%%%
"0":"d0"     "1":"d1"  "2":"d2"  "3":"d3"
}\qquad\qquad\qquad\quad
\xygraph{
!{<0cm,0cm>;<.7cm,0cm>:<0cm,1cm>::}
!{(-.5,2)}*{\circ} ="1"  !{(.5,2)}*{...}  !{(1.5,2)}*{\circ}="2"  
!{(-2.5,0)}*{\circ} !{(-1.5,0)}*{...} !{(-.5,0)}*{\circ} ="d1" !{(.5,0)}*{...}  !{(1.5,0)}*{\circ}="d2"  !{(2.5,0)}*{...}  !{(3.5,0)}*{\circ}
!{(-.5,2.25)}*+{\scriptstyle 0}  !{(1.5,2.25)}*+{\scriptstyle m}   
!{(-2.5,-.23)}*+{\scriptstyle 0}  !{(-.5,-.23)}*+{\scriptstyle i}     !{(1.5,-.23)}*+{\scriptstyle i+m}  !{(3.5,-.23)}*+{\scriptstyle n+m-1} 
%%%%%%%
"1":"d1"  "2":"d2"
}$$
correspondingly. 
\nl
Note that $\tau_{m,n}^i$ can be written as the composition $\partial^{i+m-1}_{n+m-2}...\partial^{i+2}_{n+1}\partial^{i+1}_n$, while $\pi_{m,n}^i$ coincides with $\partial^{n+m-1}_{n+m-2}...\partial^{i+m+2}_{n+i+1}\partial^{i+m+1}_{n+i}\partial^{i-1}_{n+i-1}...\partial^1_{n+1}\partial^0_n$.
\eex

\begin{example}\label{idid} In the Example \ref{id} the linking number $\link(id,id) = 3$. Similarly one sees that 
for $id:[n]\to [n]$ the linking number $\link(id,id)$ is $n+1$ so there exist pairs of maps with any linking number greater or equal to $1.$  
\end{example}

The above discussion on linking numbers allows us to define a sequence of notions intermediate between cosimplicial monoids and commutative cosimplicial  monoids. 
\begin{defin}\lb{dnc}
Let $n\ge 1.$ We call a cosimplicial monoid $E$ in a symmetric monoidal category $V$ {\em $n$-commutative} if 
for any morphisms $\tau:[p]\to [n],\ \pi:[q]\to [n]$ in $\De$ with $\link(\tau,\pi) \le n$
the  diagram

\begin{equation}\lb{ncom}
    \xymatrix@C = +4em{
     E(p)\otimes E(q) \ar[rr]^{E(\pi)\otimes E(\tau)} \ar[d]_{c_{E(p), E(q)}} & &   E(n)\otimes E(n) \ar[d]^{\mu}
      \\
  E(q)\otimes E(p)\ar[r]^{E(\tau)\otimes E(\pi)}& E(n)\otimes E(n) \ar[r]^{\mu_{}} & E(n)
}    
\end{equation}
 commutes.   Here $c_{E(p), E(q)}$ is the braiding in $\bV$ and $\mu$ is the product of the monoid $E(n)$ .
\end{defin}

For convenience, we also call an arbitrary cosimplicial monoid without any commutativity requirement $0$-commutative.

\begin{remark} It follows from this definition and the Example \ref{idid} that in an $n$-commutative cosimplicial monoid the components $E(m)$ for $m< n$ are commutative monoids. 
%In particular, for a $1$-commutative cosimplicial monoid $E$ the component $E(0)$ is commutative. 
\end{remark}

\begin{remark} In \cite{de} a slightly different definition of $n$-commutative cosimplicial complexes of monoids is given. %Our present definition is weaker so that $n$-commutative complex of monoids in the sense of \cite{de} is also $n$-commutative in our present sense.
\end{remark}

\subsection{Operad for $n$-commutative cosimplicial monoids}

We are going to construct a tower of operads $\MM^{(0)}\to  \MM^{(1)}\to \MM^{(2)}\to \ldots \to\MM,$ where $\MM^{(n)}$ has $n$-commutative cosimplicial monoids as its category of algebras. 

Firstly  we would like to reformulate the linking number of two morphisms in $\Delta$ in terms of a property of operations in the paths operad $\MM.$  

Let $f:\langle m+1\rangle \to\langle p+1\rangle $ be a functor.
 We say that the generator $\bar{i}:i\to i+1$ in $\langle m+1\rangle $ is in the support set $supp(f)$ if $f(\bar{i})\ne id.$ 
Let  $\phi:\langle m+1\rangle \to\langle p+1\rangle \times\langle q+1\rangle $ be a path in $\MM.$ Composing with the corresponding projections we have then two functors 
 $\phi_1:\langle m+1\rangle \to\langle p+1\rangle $ and $\phi_2:\langle m+1\rangle \to\langle q+1\rangle $ in $\mathbf{Int}$ with the supports $A$ and $B$ correspondingly. 
 \begin{defin}
A {\em shuffling} of $\phi$ of length $n$ is a decomposition of the supports sets $A$ and $B$  into disjoint unions of nonempty sets
$$A = A_{1}\cup ...\cup A_{s}, \quad A_{1}<...<A_{s}$$
$$B = B_{1}\cup ...\cup B_{t}, \quad  B_{1}<...<B_{t}, $$ $$ s+t = n+1 , \ |s-t|\le 1,$$
which satisfies one of the inequalities \begin{equation}\label{first}A_{1}\leq B_{1}\leq A_{2}\leq B_{2} \leq... \  ,\end{equation}
% or 
\begin{equation} B_{1}\leq A_{1}\leq B_{2}\leq A_{2} \leq...  \  .\end{equation}
A choice of one of the inequalities is a part of the shuffling. 

\end{defin}

\begin{defin} The {\em linking number} $\link(\phi)$ of a path $\phi$ is the smallest $n$ for which there exists a shuffling of $\phi$ of length $n.$  

\end{defin}

As in  case of linking numbers of cosimplicial operators we have 
$$|supp(\phi_1)\cap supp(\phi_2)|\ \leq \link(\phi)\ .$$ 

Immediately from Joyal's duality we have the following.
\begin{lem}\label{link=link} Let $\tau :[p]\to [m]$ and $\pi:[q]\to [m]$ in $\De$  and let $\widetilde\tau:\langle m+1\rangle \to\langle p+1\rangle $  and   $\widetilde\pi:\langle m+1\rangle \to\langle q+1\rangle $   be their Joyal's dual functors.
Let also $\widetilde{\tau\pi}:\langle m+1\rangle\to \langle p+1\rangle \times\langle q+1\rangle $ be the composite 
$$ \xymatrix{\langle m+1\rangle\ar[r]^(.34)\delta & \langle m+1\rangle\times \langle m+1\rangle \ar[r]^{(\widetilde\tau,\widetilde\pi)} & \langle p+1\rangle\times \langle q+1\rangle}.$$ 
Then
$$\link(\tau,\pi)=\link(\widetilde{\tau\pi}).$$

\end{lem}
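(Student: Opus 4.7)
The plan is to reduce the equality of linking numbers to a direct matching of the combinatorial data defining a shuffling on the two sides, by identifying the supports of the Joyal duals with the images of the original maps.

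First I would unpack the explicit description of Joyal duality recalled above: for a morphism $\phi:[m]\to[l]$ the dual functor $\widetilde\phi:\langle l+1\rangle\to\langle m+1\rangle$ sends the generator $\bar i:i\to i+1$ to the morphism $m_i\to M_i+1$ when $i\in Im(\phi)$, and to the identity otherwise. Since $m_i\le M_i$ whenever the preimage is nonempty, the morphism $m_i\to M_i+1$ is always nontrivial. Consequently the support $\text{supp}(\widetilde\phi)$, identified with the subset of $[l]$ indexing the generators $\bar i$ that are not sent to identities, equals exactly $Im(\phi)$ as a subset of $[l]$.

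Applied to $\tau:[p]\to[m]$ and $\pi:[q]\to[m]$ this identifies
\[
\text{supp}(\widetilde\tau)=Im(\tau)\subseteq[m],\qquad \text{supp}(\widetilde\pi)=Im(\pi)\subseteq[m].
\]
Since $\widetilde{\tau\pi}$ is defined as $(\widetilde\tau,\widetilde\pi)\circ\delta$, composition with the two projections $\langle p+1\rangle\times\langle q+1\rangle\to\langle p+1\rangle,\langle q+1\rangle$ recovers $\widetilde\tau$ and $\widetilde\pi$ respectively. Hence the two supports $A,B$ entering the definition of a shuffling of the path $\widetilde{\tau\pi}$ are precisely $Im(\tau)$ and $Im(\pi)$, viewed inside the common index set $[m]$.

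With this identification, a shuffling of the path $\widetilde{\tau\pi}$ of length $n$ is a pair of ordered decompositions $Im(\tau)=A_1\cup\cdots\cup A_s$ and $Im(\pi)=B_1\cup\cdots\cup B_t$ with $s+t=n+1$, $|s-t|\le 1$, satisfying one of the two alternation inequalities; this is literally the data of a shuffling of the pair $(\tau,\pi)$ of length $n$ in the sense of the definition on $\Delta$ (the constraint $|s-t|\le 1$ is automatic there, as noted right after the definition, because alternation forces it). The correspondence is a bijection between the two sets of shufflings of length $n$. Taking the minimum over $n$ on both sides yields $\link(\tau,\pi)=\link(\widetilde{\tau\pi})$.

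The only delicate point — and the main (very mild) obstacle — is bookkeeping: making sure the indexing bijection between generators $\bar i$ of $\langle m+1\rangle$ and elements $i\in[m]$ respects the natural orderings, so that ``$A_1<\cdots<A_s$'' on the path side and ``$A_1<\cdots<A_s$'' on the $\Delta$ side refer to the same order relation. This is immediate from the construction of $\langle m+1\rangle$ as the free interval on the linear poset $0<1<\cdots<m+1$. There is no further content in the argument.
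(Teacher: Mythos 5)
Your proof is correct and is exactly the argument the paper compresses into the single phrase ``Immediately from Joyal's duality'': the key observation that $\mathrm{supp}(\widetilde\phi)=Im(\phi)$ identifies the data of a shuffling on the two sides verbatim. You have simply written out the details the paper leaves implicit.
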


We realise $\MM^{(n)}$ as a  quotient of $\MM^{(0)}.$ For this we introduce a relation on  $\MM^{(0)}(p,q;k).$   
 Let $(\phi,\sigma)$ be an element of $\MM^{(0)}(p,q;k)$:
$$ \phi:\langle k+1\rangle \to \langle p+1\rangle \times \langle q+1\rangle  \ \ \mbox{and} \ \     \sigma \in \Sigma_2 = \{e,t\}.$$
 We  say that $(\phi,e)$ is {\em $n$-equivalent} to $(\phi,t)$ if 
 $\link(\phi) \le n.$

 Let $\MM^{(n)}$ be the quotient of $\MM^{(0)}$ by the equivalence relation generated by $n$-equivalence relation. More precisely, $\MM^{(n)}$ is the following pushout in the category of $\mathbb{N}$-coloured operads in $\Set$ :    

 \begin{equation*}
    \xymatrix@C = +4em{
    FU(\MM^{(0)})  \ar[r]^{F(b)} \ar[d]&    F(B) \ar[d]^{}
      \\
       \MM^{(0)}  \ar[r]^{q^{(n)}} & \MM^{(n)}
}    
\end{equation*}
where $F$ is the free operad functor on a collection, $U$ is the forgetful functor, $B$ is the quotient of the collection $U(\MM^{(0)})$ by the equivalence relation generated by $n$-equivalence relation, and $b:U(\MM^{(0)}) \to B$ is the quotient map.

 \begin{thm}
The category of $n$-commutative cosimplicial monoids is equivalent to the category of $\MM^{(n)}$-algebras.  
 \end{thm}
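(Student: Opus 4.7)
The plan is to unwind the pushout definition of $\MM^{(n)}$ and translate the relations it imposes on algebras into the commutativity diagram of Definition~\ref{dnc}. First I would invoke Theorem~\ref{bvass} to identify $\MM^{(0)}$-algebras with cosimplicial monoids, with an operation $(\phi,\sigma)\in\MM^{(0)}(p,q;k)$ acting by formula (\ref{action}). By the universal property of the pushout of $\mathbb{N}$-coloured operads in $\Set$, an $\MM^{(n)}$-algebra in $\bV$ is then precisely an $\MM^{(0)}$-algebra $E$ on which every pair of elements of $U(\MM^{(0)})$ identified by the quotient map $b:U(\MM^{(0)})\to B$ acts by one and the same operation. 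Since the $n$-equivalence relation is already an equivalence on the underlying collection (each non-trivial class being a pair $\{(\phi,e),(\phi,t)\}$ with $\link(\phi)\le n$), this reduces to the family of conditions $\alpha(\phi,e)=\alpha(\phi,t)$ for all such $\phi$.

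Next I would translate these conditions via Joyal duality. A path $\phi:\langle k+1\rangle\to\langle p+1\rangle\times\langle q+1\rangle$ decomposes as a pair $(f_1,f_2)$ with $f_1:\langle k+1\rangle\to\langle p+1\rangle$ and $f_2:\langle k+1\rangle\to\langle q+1\rangle$, whose Joyal duals are maps $\tau:=\widetilde{f_1}:[p]\to[k]$ and $\pi:=\widetilde{f_2}:[q]\to[k]$ in $\De$. Using (\ref{action}) applied to the transposition $t\in\Sigma_2$, the operation $\alpha(\phi,e)$ sends $x\otimes y$ to $E(\tau)(x)\cdot E(\pi)(y)$, while $\alpha(\phi,t)$ sends it to $E(\pi)(y)\cdot E(\tau)(x)$; these agree precisely when diagram (\ref{ncom}) commutes for the pair $(\tau,\pi)$. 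By Lemma~\ref{link=link} we have $\link(\phi)=\link(\tau,\pi)$, so the hypothesis $\link(\phi)\le n$ becomes $\link(\tau,\pi)\le n$. Conversely, the assignment $\phi\leftrightarrow(\tau,\pi)$ is a bijection between paths $\langle k+1\rangle\to\langle p+1\rangle\times\langle q+1\rangle$ and pairs in $\De([p],[k])\times\De([q],[k])$, so every pair $(\tau,\pi)$ featuring in Definition~\ref{dnc} comes from a unique $\phi$.

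Combining the two steps, the family of relations imposed by the pushout on an $\MM^{(0)}$-algebra $E$ is exactly the family of commutative diagrams (\ref{ncom}) required for $n$-commutativity, ranging over all targets $[k]$ and all pairs with $\link(\tau,\pi)\le n$. Hence $\MM^{(n)}$-algebras in $\bV$ are precisely the $n$-commutative cosimplicial monoids, and the equivalence of categories follows from the fact that morphisms on either side are morphisms of cosimplicial monoids.

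The main obstacle I expect is the careful handling of the operadic pushout: one must justify that algebras over the pushout are characterised only by the pairwise identifications of operations coming from $b$, with no additional constraints forced by the operadic closure that the pushout formally performs. This is a standard fact about pushouts of operads presented by relations on their underlying collections (the free-forgetful adjunction $F\dashv U$ at the level of collections transports to a coequaliser-of-algebras description), but it should be either spelled out or cited to avoid circular reasoning. Once this formal piece is in place, the rest of the argument is a purely mechanical dictionary between paths in the commutative lattice and pairs of nondecreasing maps in $\De$.
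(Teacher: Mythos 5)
Your argument is correct and is exactly the route the paper intends: its own proof reads ``Obvious from construction,'' and what you have written is the unwinding of that construction --- the contravariant algebra functor turning the operadic pushout into a pullback of algebra categories, combined with the Joyal-duality dictionary (formula (\ref{action}) and Lemma \ref{link=link}) matching the identifications $(\phi,e)\sim(\phi,t)$ with the diagrams (\ref{ncom}). The one point you flag as a potential obstacle (that algebras over the pushout are cut out precisely by the collection-level identifications coming from $b$) is indeed the standard fact you describe, so no gap remains.
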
 
 \begin{proof}Obvious from construction. 
 \end{proof}

\begin{remark} The description of $\MM^{(n)}$ above is not explicit and we don't know if there is a better description of this operad for $0< n < \infty.$  \end{remark}

Let  $i<j,$ $\phi$   be a  path and  let $\phi_{ij}$ be the composite
$$\langle n+1\rangle \stackrel{\phi}{\to} \langle n_1 +1\rangle\times\cdots\times \langle n_k +1\rangle \stackrel{proj_{ij}}{\longrightarrow} \langle n_i +1\rangle\times \langle n_j +1\rangle.$$ 

The following Lemma will be useful for us in understanding of $\MM^{(n)}$-action.

 \begin{lem}\label{eqshuffle}  Let $(\phi,\sigma)$   be an element of $\MM^{(0)}.$ And let $i,j$ be two consecutive numbers in   $\sigma = (\ldots, i, j , \ldots),$ such that $\link(\phi_{ij})\le  n .$ Then $(\phi  ,\sigma)$ is $n$-equivalent to the $(\phi,\sigma')$ where $\sigma'$ is obtained from $\sigma$ by permuting $i$ and $j.$
 \end{lem}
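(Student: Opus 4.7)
The strategy is to exhibit both $(\phi,\sigma)$ and $(\phi,\sigma')$ as operadic substitutions in $\MM^{(0)}$ of a common $(k-1)$-ary operation with one of two binary operations that are already identified by the defining $n$-equivalence on binary operations. Since $\MM^{(n)}$ is a pushout in coloured operads, the congruence generated by the binary identifications is closed under operadic composition, and the identification at arity $k$ then follows automatically.

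First I would decompose the path via Joyal duality. Writing each component $\phi_a = \mathrm{proj}_a \circ \phi: \langle n+1\rangle \to \langle n_a+1\rangle$ as the Joyal dual $\widetilde{f_a}$ of a morphism $f_a : [n_a] \to [n]$ in $\De$, we have $\phi = (\widetilde{f_1},\ldots,\widetilde{f_k}) \circ \delta$, where $\delta$ is the $k$-fold diagonal of $\langle n+1\rangle$. In particular $\phi_{ij} = (\widetilde{f_i}, \widetilde{f_j}) \circ \delta$, so by Lemma \ref{link=link} the hypothesis $\link(\phi_{ij}) \le n$ is the same as $\link(f_i, f_j) \le n$. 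Setting $\beta = (\phi_{ij}, e)$ and $\beta^{\mathrm{op}} = (\phi_{ij}, t)$ in $\MM^{(0)}(n_i, n_j; n)$, the very definition of the $n$-equivalence on binary operations tells us that $\beta$ and $\beta^{\mathrm{op}}$ are $n$-equivalent.

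The main construction is the ambient $(k-1)$-ary operation $\bar\gamma = (\bar\phi, \bar\sigma)$, where $\bar\phi$ is obtained from $\phi$ by replacing the two components $\widetilde{f_i}, \widetilde{f_j}$ by a single copy of $\mathrm{id}_{\langle n+1\rangle}$ sitting at a new color-$n$ slot, and $\bar\sigma \in \Sigma_{k-1}$ is obtained from $\sigma$ by collapsing the consecutive values $i, j$ at positions $\ell, \ell+1$ into a single entry and relabeling the remaining values in an order-preserving way. Using the operadic substitution rule of $\MM$ (cartesian product of functors in $\Cat_{*,*}$) together with the block-substitution rule of $\Ass$, one checks by direct computation that substituting $\beta$ into $\bar\gamma$ at the merged slot recovers $(\phi,\sigma)$, while substituting $\beta^{\mathrm{op}}$ recovers $(\phi, \sigma')$.

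It remains to pass to $\MM^{(n)}$. Since it is the pushout defined earlier, the quotient map $q^{(n)}: \MM^{(0)} \to \MM^{(n)}$ is a morphism of operads and the relation it imposes is a congruence for operadic substitution. Hence the identification $q^{(n)}(\beta) = q^{(n)}(\beta^{\mathrm{op}})$ propagates to
\[ q^{(n)}\bigl((\phi,\sigma)\bigr) = q^{(n)}\bigl(\bar\gamma \circ \beta\bigr) = q^{(n)}\bigl(\bar\gamma \circ \beta^{\mathrm{op}}\bigr) = q^{(n)}\bigl((\phi,\sigma')\bigr), \]
which is exactly the required $n$-equivalence. The one nontrivial step is the combinatorial bookkeeping in the third paragraph, namely verifying that block-substitution of $e$ (respectively $t$) at the merged slot of $\bar\sigma$ produces the sequence $\sigma$ (respectively $\sigma'$) at the top level; this is routine given the standard convention for operadic composition in $\Ass$ and the fact that $\bar\sigma$ was built precisely by collapsing positions $\ell,\ell+1$.
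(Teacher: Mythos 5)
Your strategy --- realizing the swap as an operadic substitution of the two binary generators $(\phi_{ij},e)\sim(\phi_{ij},t)$ into a common $(k-1)$-ary operation and then invoking that the defining relation of $\MM^{(n)}$ is an operadic congruence --- is sound in outline and genuinely different from the paper's argument. The paper argues semantically: two operations of a $\bSet$-operad presented by generators and relations coincide iff they act identically on every algebra, and the equality of the two actions is read off from formula (\ref{action}) using $n$-commutativity applied to the pair $f_i,f_j$ (via Lemma \ref{link=link}, exactly as in your second paragraph). Your syntactic route is more explicit, but one step fails as written. Operadic substitution $\bar\gamma\circ_p\beta$ of a binary $\beta$ into the $p$-th slot of a $(k-1)$-ary operation always produces an operation whose two new inputs occupy the \emph{adjacent slot positions} $p$ and $p+1$, with the colours $(n_i,n_j)$ inserted consecutively into the colour string. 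The hypothesis of the lemma only says that $i$ and $j$ are consecutive \emph{as letters of the word} $\sigma$; as input indices they need not be adjacent. For instance with $k=3$, $\sigma=(1,3,2)$, $i=1$, $j=3$, no composite $\bar\gamma\circ_p\beta$ with $\beta$ binary even lies in the component $\MM^{(0)}(n_1,n_2,n_3;m)$, so it cannot equal $(\phi,\sigma)$; your claim that the substitution ``recovers $(\phi,\sigma)$'' is false in this case.

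The gap is repairable. The generating binary relation is stable under the $\Sigma_2$-action (since $\link$ is unchanged by transposing the two factors), hence the congruence it generates is stable under all the symmetric group actions; so one may conjugate $(\phi,\sigma)$ by a permutation of the inputs bringing slots $i$ and $j$ to adjacent positions, run your substitution argument there, and conjugate back --- equivalently, prove the lemma for $j=i+1$ and deduce the general case by equivariance. With that addendum your proof is correct: in the adjacent case the identity $(\mathrm{id}\times\cdots\times\phi_{i,i+1}\times\cdots\times\mathrm{id})\circ\bar\phi=\phi$ does hold (both sides have components $\widetilde{f_1},\ldots,\widetilde{f_k}$), the $\A ss$-bookkeeping works, and the passage to the quotient is legitimate. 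One cosmetic point: the output colour of $\beta=(\phi_{ij},e)$ is the colour $m$ of $\phi$, not $n$, which you are also using for the commutativity degree.
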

 
 \begin{proof} It is enough to prove that $(\phi,\sigma)$ and $(\phi,\sigma')$ produce the same action on an $n$-commutative  cosimplicial monoid in $\Set$. But this easily follows from the formula for this action (\ref{action}).  
\end{proof}

 %\begin{lem} \label{permutation} Let $(\phi,\sigma)$  and $(\phi,\sigma')$ be two operations in $\MM^{(0)}$ such that  
%$(\phi_{ij},\sigma)$  and $(\phi_{ij},\sigma')$ are $n$-equivalent. Then 
%$$q^{(n)}(\phi,\sigma) = q^{(n)}(\phi,t_{ij}\sigma')$$
%where $t_{ij}$ is the transposition exchanging $i$ and $j.$
%\end{lem}

The following simple result relates the paths operad and the lattice paths operad of Batanin and Berger \cite{BB}. In  the following sections we will generalise this theorem to all $n>0.$  The reader is referred to the Appendix A for main facts about the lattice path operads.

\bth\label{L^1M^0}
There are operadic morphisms $p:\LL\to\MM$ and $p^{(1)}: \LL^{(1)}\to\MM^{(0)}$ which make the following square commutative:

\begin{equation}\lb{cso}
    \xymatrix@C = +4em{
      \LL^{(1)} \ar[r] \ar[d]& \MM^{(0)} \ar[d]^{}
      \\
    \LL \ar[r]^{} & \MM
}    
\end{equation}
\eth

\begin{proof}

The morphism $p$ is induced by the natural symmetric monoidal transformation $-\boxx- \to - \times -$ from funny tensor product to cartesian product in $\Cat.$ 

%To construct $p^{(1)}$ first observe that $\LL^{(1)}(n_1,\ldots,n_k;0)$ is isomorphic to the $ \Sigma_k$ and this isomorphism is natural with respect to the variables $n_1,\ldots, n_k.$ Indeed, a path $\phi$ from $\langle 1\rangle$  \ {\color{red} AD} \  is it [1] or \langle 1\rangle ? below?
%of complexity $1$ is  a functor  from  $\langle 1\rangle $ to the hypercube $\langle n_1 +1\rangle\boxx\cdots\boxx \langle n_k +1\rangle,$  which maps $0$  to $(0,\ldots,0)$ and $1$ to $(1,\ldots,1).$ 
% Geometrically a path $\phi$ from $[1]$ of complexity $1$ is  a path on the boundary of the $k$-dimensional hypercube. 
%Moreover, for each $0\le i< j\le k$ the composite functor
%$$\langle 1\rangle \to   \langle n_1 +1\rangle\boxx\cdots\boxx \langle n_k +1\rangle\stackrel{proj_{ij}}{\to} \langle n_i +1\rangle \boxx \langle n_j+1\rangle$$
%maps the unique generating morphism $\bar{0}$ to a composite of chain of morphisms on the boundary of the square  $\langle n_i +1\rangle \boxx\ \langle n_j+1\rangle .$  So, such a path $\phi$ is completely determined by a choice of linear order on the coordinates in which $\phi$ 
%passes the edges of the box $\langle n_1 +1\rangle \boxx\ldots\boxx \langle n_k+1 \rangle$.  The set of such paths is, therefore, isomorphic to $\Sigma_k$. Naturality is trivial. 
%\nl
The morphism of operads
$$p^{(1)}: \LL^{(1)}(n_1,\ldots,n_k;n)\to \MM^{}(n_1,\ldots,n_k;n)\times \Sigma_k $$
is determined by two projections
$$\LL^{(1)}(n_1,\ldots,n_k;n)\to   \LL^{}(n_1,\ldots,n_k;n)\to \MM^{}(n_1,\ldots,n_k;n)$$ and 
$$\varpi^{(1)}:\LL^{(1)}(n_1,\ldots,n_k;n)\to  \Sigma_k$$
It is  a morphism of operads due to Lemma \ref{LAss} and it  obviously fits in  the commutative square \eqref{cso}.
\end{proof}

\subsection{Lifting of paths, complexity and linking numbers}

To generalise Theorem \ref{L^1M^0} to all filtration we will first reformulate the definition of linking numbers in terms of liftings of paths to lattice paths of corresponding complexity. 

\begin{defin} Let $\phi:\langle m+1\rangle \to\langle p+1\rangle \times\langle q+1\rangle $ be a path in $\MM.$ We will say that a lattice path $\psi:\langle m+1\rangle \to\langle p+1\rangle \boxx\langle q+1\rangle $ is a lifting of $\phi$ if its composite with the canonical projection to $\langle p+1\rangle \times\langle q+1\rangle $ is  $\phi.$

\end{defin}

\begin{lem} Let $\phi:\langle m+1\rangle \to\langle p+1\rangle \times\langle q+1\rangle $ be a path.
There is a one to one correspondence between shufflings of  $\phi$ and liftings of $\phi.$ Under this correspondence the shufflings of length $n$ correspond to liftings of complexity $n$ and vice versa.   

\end{lem}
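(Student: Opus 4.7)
My plan is to exhibit the bijection explicitly by reading each lifting as a word in the alphabet $\{H,V\}$ and then grouping it into maximal constant blocks.

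For the forward map, a lifting $\psi:\langle m+1\rangle\to\langle p+1\rangle\boxx\langle q+1\rangle$ assigns to each generator $\bar i$ a specific sequence of horizontal and vertical unit steps in the funny tensor product whose image in $\langle p+1\rangle\times\langle q+1\rangle$ agrees with $\phi(\bar i)$. Concatenating these sequences in the order $\bar 0,\bar 1,\dots,\bar m$ produces a word $w_\psi$ in the letters $H$ and $V$. I decompose $w_\psi$ into its maximal monochromatic blocks and define $A_j$ (respectively $B_j$) as the set of indices $\bar i$ whose horizontal (resp.\ vertical) contribution lies in the $j$-th maximal $H$-block (resp.\ $V$-block). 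Since the letters of $w_\psi$ are written in increasing $\bar i$-order, the inequalities $A_1\le B_1\le A_2\le\cdots$ (or the mirror chain, determined by the first letter of $w_\psi$) hold automatically, and the constraint $|s-t|\le 1$ is built into maximality.

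For the inverse, given a shuffling I reconstruct $\psi(\bar i)$ as follows. For $\bar i\in A\setminus B$ (resp.\ $B\setminus A$) the lift is forced. For $\bar i\in A\cap B$ there are unique indices $j,k$ with $\bar i\in A_j\cap B_k$, and the order of $A_j$ and $B_k$ in the chain dictates whether the $H$ and $V$ contributions of $\psi(\bar i)$ are written as $HV$ or $VH$. For this to be well-defined I need $A_j$ and $B_k$ to be adjacent blocks in the chain. This is the central claim: if they were separated by an intermediate block, say $A_\ell$, the inequality chain would simultaneously force $\bar i\le a$ and $\bar i\ge a$ for every $a\in A_\ell$, hence $A_\ell=\{\bar i\}$, contradicting disjointness of the $A_j$ together with $\bar i\in A_j$. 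With adjacency established, the two constructions are mutually inverse by inspection.

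Finally, the equality of length and complexity is a direct count: if $w_\psi$ has $r=s+t$ maximal constant blocks then it has $r-1$ direction changes, which is by definition the complexity of $\psi$, while the associated shuffling has length $s+t-1=r-1$. The main obstacle is the adjacency claim for the $A_j$ and $B_k$ in the inverse step; the remaining verifications amount to routine bookkeeping tracking the block structure.
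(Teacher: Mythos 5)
Your construction is essentially the paper's own proof: the same case-by-case lifting of a shuffling (ordering the $H$ and $V$ contributions of a generator in $A\cap B$ according to the position of its blocks in the inequality chain) and the same inverse obtained by grouping generators along maximal runs of constant direction, with the length/complexity count $s+t-1=r-1$ agreeing with the paper's. The adjacency claim you isolate is precisely the point the paper leaves implicit (its case list $\bar i\in A_k\cap B_k$ versus $\bar i\in A_{k+1}\cap B_k$ silently assumes that the two blocks containing $\bar i$ are consecutive), so your write-up is, if anything, slightly more careful than the original.
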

\begin{proof}  
Given a fixed shuffling we construct a lifting out of it. Without loss of generality we assume that our shuffling is such that the  inequality (\ref{first}) for decomposition is satisfied.
We then define 
$$\psi(\bar{i}) = \left\{
\begin{array}{lcccc}
(\phi_1(\bar{i}),id)\circ (id,\phi_2(\bar{i}))& , & \bar{i}\in A & \mbox{and} &  \bar{i}\notin B \\
(id,\phi_2(\bar{i})) \circ (\phi_1(\bar{i}),id)& , & \bar{i}\notin A & \mbox{and} & \bar{i}\in B \\
(id,id)\circ (id,id)& , & \bar{i}\notin A & \mbox{and}  & \bar{i}\notin B \\
(\phi_1(\bar{i}),id)\circ (id,\phi_2(\bar{i}))& , & \bar{i}\in A_k & \mbox{and}  & \bar{i}\in B_k \\
(id,\phi_2(\bar{i})) \circ (\phi_1(\bar{i}),id)& , & \bar{i}\in A_{k+1} & \mbox{and}  & \bar{i}\in B_k
\end{array}
\right.$$

In reverse direction, given a lifting $\psi:\langle m+1\rangle \to\langle p+1\rangle \boxx\langle q+1\rangle $ of $\phi$ we define a shuffling as follows.
Two generating morphisms $\bar{i}$ and $\overline{i+j}$  in  $supp(\phi_1) =A$ belongs to the same element in decomposition of $A = A_{1}\cup ...\cup A_{s}$ 
if the composite 
$$\psi(i)\stackrel{\psi(\bar{i})}{\longrightarrow} \psi(i+1) \stackrel{\psi(\overline{i+1})}{\longrightarrow}\ldots \stackrel{\psi(\overline{i+j})}{\longrightarrow} \psi(i+j+1)$$
has no more then two corners in $\langle p+1 \rangle \boxx \langle q+1 \rangle.$   
Similarly for the set $B=supp(\phi_2).$
\end{proof}

\begin{example} Following the Example \ref{id} let $\pi = \tau = id:[2]\to [2].$ The corresponding path $\phi$ is the diagonal path 
$\delta:\langle 3\rangle \to \langle 3\rangle\times \langle 3 \rangle $  as in the following picture:

$$\xygraph{
!{<0cm,0cm>;<1cm,0cm>:<0cm,1cm>::}
!{(-1,2)}*{\circ} ="uul" !{(0,2)}*{\circ} ="uu"  !{(1,2)}*{\circ}="uur"  !{(2,2)}*{\circ}="urr"
!{(-1,1)}*{\circ} ="ul" !{(0,1)}*{\circ} ="u"  !{(1,1)}*{\circ}="ur"  !{(2,1)}*{\circ}="rur"
!{(-1,0)}*{\circ} ="l" !{(0,0)}*{\circ} ="m"  !{(1,0)}*{\circ}="r"  !{(2,0)}*{\circ}="rr"
!{(-1,-1)}*{\circ} ="dl" !{(0,-1)}*{\circ} ="d"  !{(1,-1)}*{\circ}="dr"  !{(2,-1)}*{\circ}="drr"
!{(-1.15,-.85)}*+{\scriptstyle 1}  !{(-.15,.15)}*+{\scriptstyle 1} !{(.85,1.15)}*+{\scriptstyle 1}  !{(1.85,2.15)}*+{\scriptstyle 1}
%%%%%%%
"dl":"m"     "m":"ur"  "ur":"urr"
}$$

Let us choose the shuffling of length $5$ of this path as in part 1 of Example \ref{id}:
$$A_1= \{\bar{0}\},A_2=\{\bar{1}\},  A_3=\{\bar{2}\},$$
$$B_1= \{\bar{0}\},B_2=\{\bar{1}\},  B_3=\{\bar{2}\},$$
with 
$$A_{1}\leq B_{1}\leq A_{2}\leq B_{2} \leq A_3\leq B_3.$$

Then the corresponding lifting has complexity $5$ and is presented graphically as follows: 

$$\xygraph{
!{<0cm,0cm>;<1cm,0cm>:<0cm,1cm>::}
!{(-1,2)}*{\circ} ="uul" !{(0,2)}*{\circ} ="uu"  !{(1,2)}*{\circ}="uur"  !{(2,2)}*{\circ}="urr"
!{(-1,1)}*{\circ} ="ul" !{(0,1)}*{\circ} ="u"  !{(1,1)}*{\circ}="ur"  !{(2,1)}*{\circ}="rur"
!{(-1,0)}*{\circ} ="l" !{(0,0)}*{\circ} ="m"  !{(1,0)}*{\circ}="r"  !{(2,0)}*{\circ}="rr"
!{(-1,-1)}*{\circ} ="dl" !{(0,-1)}*{\circ} ="d"  !{(1,-1)}*{\circ}="dr"  !{(2,-1)}*{\circ}="drr"
!{(-1,-.78)}*+{\scriptstyle 1}  !{(-.16,.16)}*+{\scriptstyle 1} !{(1,.78)}*+{\scriptstyle 1}  !{(.85,1.15)}*+{\scriptstyle 1}  !{(2.2,2)}*+{\scriptstyle 1}
!{(1.15,-.15)}*+{\scriptstyle 0}  !{(.15,-1.15)}*+{\scriptstyle 0}  !{(2.15,.85)}*+{\scriptstyle 0}
%%%%%%%
"dl":"d"     "d":"m" 
"m":"r"     "r":"ur"  "ur":"rur" "rur":"urr"
}$$
For the shuffling from the part 8 of Example \ref{id} the corresponding lifting is  of complexity $3$:

$$\xygraph{
!{<0cm,0cm>;<1cm,0cm>:<0cm,1cm>::}
!{(-1,2)}*{\circ} ="uul" !{(0,2)}*{\circ} ="uu"  !{(1,2)}*{\circ}="uur"  !{(2,2)}*{\circ}="urr"
!{(-1,1)}*{\circ} ="ul" !{(0,1)}*{\circ} ="u"  !{(1,1)}*{\circ}="ur"  !{(2,1)}*{\circ}="rur"
!{(-1,0)}*{\circ} ="l" !{(0,0)}*{\circ} ="m"  !{(1,0)}*{\circ}="r"  !{(2,0)}*{\circ}="rr"
!{(-1,-1)}*{\circ} ="dl" !{(0,-1)}*{\circ} ="d"  !{(1,-1)}*{\circ}="dr"  !{(2,-1)}*{\circ}="drr"
!{(-1.2,-1)}*+{\scriptstyle 1}  !{(0,.2)}*+{\scriptstyle 1} !{(1.2,1)}*+{\scriptstyle 1}  !{(2,2.2)}*+{\scriptstyle 1}
!{(-1.15,.15)}*+{\scriptstyle 0}  !{(1.15,-.15)}*+{\scriptstyle 0}  !{(.85,2.15)}*+{\scriptstyle 0}
%%%%%%%
"dl":"l"     "l":"m" 
"m":"r"     "r":"ur"  "ur":"uur"  "uur":"urr"
}$$

\end{example}

\subsection{Delannoy paths}

A path $\phi:\langle n+1\rangle \to\langle p+1\rangle \times\langle q+1\rangle $ is called {\it sujective} if for any generator $\bar{i}$ the arrow $\phi(\bar{i}) = (t,s)$ where $t$ and $s$ are either one of the generators $\bar{j}$ or an identity. In other words this is a path whose each movement adds $0$ or $1$ to each coordinate.  

\begin{lem}\label{cont} For each path $\phi:\langle n+1\rangle \to\langle p+1\rangle \times\langle q+1\rangle $ there is a canonical factorisation 
$$\langle n+1\rangle \stackrel{\phi'}{\to}\langle p'+1\rangle \times\langle q'+1\rangle \to\langle p+1\rangle \times\langle q+1\rangle $$ such that 
$\phi'$ is surjective and $\link(\phi') = \link(\phi).$
\end{lem}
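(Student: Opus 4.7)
The plan is straightforward: factor $\phi$ through the images of its two coordinate projections. Concretely, let $\phi_1 = pr_1\circ\phi:\langle n+1\rangle\to\langle p+1\rangle$ and $\phi_2 = pr_2\circ\phi:\langle n+1\rangle\to\langle q+1\rangle$, and denote their object-images by $P\subseteq\{0,\ldots,p+1\}$ and $Q\subseteq\{0,\ldots,q+1\}$. Endpoint preservation of $\phi$ forces $\{0,p+1\}\subseteq P$ and $\{0,q+1\}\subseteq Q$, so setting $p':=|P|-2$ and $q':=|Q|-2$, the unique order-preserving bijections $\{0,\ldots,p'+1\}\cong P$ and $\{0,\ldots,q'+1\}\cong Q$ induce canonical morphisms $\iota_P:\langle p'+1\rangle\to\langle p+1\rangle$ and $\iota_Q:\langle q'+1\rangle\to\langle q+1\rangle$ in $\Cat_{*,*}$, each sending a generator $\bar k$ to the unique composite of generators it determines. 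Since each $\phi_i$ factors on objects through its image, I obtain unique $\phi_i'$ with $\phi_i=\iota_{?}\circ\phi_i'$, and define $\phi':=(\phi_1',\phi_2')$, yielding the factorisation $\phi=(\iota_P\times\iota_Q)\circ\phi'$. Canonicity is automatic, since $P$ and $Q$ are intrinsic to $\phi$.

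Next I will check that $\phi'$ is surjective in the stated sense. For a generator $\bar i$ of $\langle n+1\rangle$, if the step $\phi_1'(i)\to\phi_1'(i+1)$ had length at least $2$, then surjectivity of $\phi_1'$ on objects would produce some $j$ with $\phi_1'(j)$ strictly between $\phi_1'(i)$ and $\phi_1'(i+1)$, and monotonicity would force $i<j<i+1$ --- impossible. The same argument applies to $\phi_2'$, so each step of $\phi'$ adds $0$ or $1$ to each coordinate.

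The main substance of the lemma is the equality $\link(\phi')=\link(\phi)$. The key observation is that $\iota_P$ and $\iota_Q$ preserve the distinction between identities and non-identities, so the support sets satisfy $\mathrm{supp}(\phi_i')=\mathrm{supp}(\phi_i)$ as subsets of the set of generators of $\langle n+1\rangle$. Because the definition of a shuffling --- and hence of $\link$ --- uses only these two support sets together with their inherited total order on the generators of $\langle n+1\rangle$, the two linking numbers must coincide. The only point requiring a moment of care is confirming from the definition that no further data about the codomains $\langle p+1\rangle$, $\langle q+1\rangle$ enters the shuffling condition, but this is immediate from inspection.
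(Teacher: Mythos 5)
Your proof is correct and is essentially the paper's argument: the factorisation through the object-images of $\phi_1,\phi_2$ is exactly the Joyal dual of the epi--mono factorisations of the corresponding maps in $\De$ that the paper uses, and the preservation of the linking number rests on the same observation (it depends only on the supports, respectively the images). The only difference is presentational --- you work directly on the interval side and verify $supp(\phi_i')=supp(\phi_i)$ by hand, where the paper instead cites Lemmas \ref{epimono} and \ref{link=link}.
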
 
\begin{proof} We can take Joyal's dual to the projections $\phi_1$ and $\phi_2$ then take their epi-mono factorisations and product of their Joyal's duals again. The result now follows from Lemmas  \ref{epimono} and \ref{link=link}.
\end{proof}
\begin{defin}
A surjective path is {\it sharp} if 
 $\phi(\bar{i}) \ne (\bar{s},\bar{t})$ for all $i,t,s.$
 \end{defin}

A path $\phi$ is called {\it injective} if $\phi(\bar{i}) \ne (id,id)$ for any $0\le i \le n.$  That is, such path has only $1$ or $0$ stops,     
\begin{lem}\label{nond} For each path $\phi:\langle n+1\rangle \to\langle p+1\rangle \times\langle q+1\rangle $ there is a %canonical map in $\mathbf{Int}$
%$$\pi:\langle n+1\rangle {\to}\langle n'+1\rangle$$ and a
canonical factorisation
$$ \langle n+1\rangle {\to}\langle n'+1\rangle \stackrel{\phi'}{\to}\langle p+1\rangle \times\langle q+1\rangle $$ such that $\phi'$ is injective and 
$\link(\phi') = \link(\phi).$
\end{lem}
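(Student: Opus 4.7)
The plan is to construct the desired factorisation by contracting precisely those generators at which $\phi$ does nothing, and then to check that deleting such ``dummy stops'' cannot affect the linking number because the linking number depends only on the supports of the two projections.

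First, let $S\subseteq\{\bar 0,\ldots,\bar n\}$ denote the set of generators with $\phi(\bar i)=(id,id)$, and set $n' = n - |S|$. Let $c:\langle n+1\rangle\to\langle n'+1\rangle$ be the unique functor in $\Cat_{*,*}$ that sends each $\bar i\in S$ to an identity and sends the remaining generators bijectively and in an order-preserving way onto the generators of $\langle n'+1\rangle$. (Equivalently, $c$ is the quotient poset map obtained by identifying $i$ with $i+1$ whenever $\bar i\in S$.) The factorisation $\phi=\phi'\circ c$ then defines $\phi'$ unambiguously: on a generator $\bar j$ of $\langle n'+1\rangle$ one sets $\phi'(\bar j):=\phi(\bar i)$ for the unique $\bar i\notin S$ with $c(\bar i)=\bar j$. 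By construction $\phi'$ is injective, and the construction is manifestly canonical as it depends only on $\phi$.

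Next I would verify that $\link(\phi')=\link(\phi)$. Writing $\phi_1,\phi_2$ and $\phi'_1,\phi'_2$ for the two projections, one has $\phi_r=\phi'_r\circ c$ for $r=1,2$. Since $\bar i\in S$ iff $\phi_1(\bar i)=id$ and $\phi_2(\bar i)=id$, one has $\mathrm{supp}(\phi_1)\cup\mathrm{supp}(\phi_2)\subseteq\{\bar 0,\ldots,\bar n\}\setminus S$, and $c$ restricts to an order-preserving bijection between this complement and the generating set of $\langle n'+1\rangle$. Hence $c$ induces order-preserving bijections
\begin{equation*}
\mathrm{supp}(\phi_1)\;\xrightarrow{\ \sim\ }\;\mathrm{supp}(\phi'_1),\qquad \mathrm{supp}(\phi_2)\;\xrightarrow{\ \sim\ }\;\mathrm{supp}(\phi'_2).
\end{equation*}

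Finally, since shufflings are defined purely in terms of ordered decompositions of these support sets, the bijections above give a length-preserving bijection between shufflings of $\phi$ and shufflings of $\phi'$. Taking minima yields $\link(\phi)=\link(\phi')$. There is no serious obstacle here; the only point that requires any care is verifying that the induced set maps on supports are genuinely order-preserving bijections, which is immediate because $c$ is the identity on generators outside $S$ after we make the evident identification of $\langle n'+1\rangle$ with the quotient poset.
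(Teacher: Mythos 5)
Your proposal is correct and follows essentially the same route as the paper: collapse the generators on which $\phi$ acts by $(id,id)$ to identities in a shorter interval, and observe that since such generators lie outside $\mathrm{supp}(\phi_1)\cup\mathrm{supp}(\phi_2)$, the supports (and hence the shufflings and the linking number) are unchanged. Your write-up merely makes explicit the order-preserving bijections on supports that the paper leaves implicit.
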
 
\begin{proof}
Indeed, if $\phi(\bar{i}) = (id,id)$ we then send it to $id$ in $\langle n'+1\rangle .$ In other words we construct a shorter path with exactly the same stopping points where we `cut off' all nontrivial loops in $\phi.$ Since generators $\bar{i}$ with $\phi(\bar{i}) = (id,id)$ do not contribute to the supports sets of $\phi_1$ and $\phi_2$ the linking number does not change. 
\end{proof}

\begin{defin}[\cite{BS}] A surjective and injective path is called a {\em Delannoy path}. \end{defin} 

%{\color{red} Regular is the same as Delannoy.}

A Delannoy path  $\phi:\langle n+1\rangle \to\langle p+1\rangle \times\langle q+1\rangle $ {\it has a low corner} at the point $(s+1,t)$ if there exists $0\le i\le n$ such that
$\phi(\bar{i}) = (\bar{s},id)$ and $\phi(\overline{i+1}) = (id,\bar{t}).$    It has an {\it upper corner} at $(t+1,s)$ if   there exists $0\le i\le n$ such that
$\phi(\bar{i}) = (id,\bar{t})$ and $\phi(\overline{i+1}) = (\bar{s},id).$ A Dellannoy path which does not have low or upper corners is called {\it smooth}.   
That is, smooth paths are `opposite' to sharp paths which have all possible corners. 
%A lattice path $\psi:\langle n+1\rangle \to \langle p +1\rangle\boxx\langle q+1\rangle $ is {\it sharp} if the composite 
%$$\psi:\langle n+1\rangle \to \langle p +1\rangle\boxx\langle q+1\rangle \to \langle p +1\rangle\times\langle q+1\rangle $$ 
%is sharp.
\begin{example}
This is an example of a sharp Dellannoy path (on the left) and a smooth path (on the right). 
$$\xygraph{
!{<0cm,0cm>;<0.6cm,0cm>:<0cm,0.6cm>::}
!{(-1,3)}*{\circ}  !{(0,3)}*{\circ}   !{(1,3)}*{\circ}  !{(2,3)}*{\circ} !{(3,3)}*{\circ}   !{(4,3)}*{\circ}  !{(5,3)}*{\bullet}="11"
!{(-1,2)}*{\circ}  !{(0,2)}*{\circ}   !{(1,2)}*{\circ}  !{(2,2)}*{\circ} !{(3,2)}*{\circ}   !{(4,2)}*{\circ}  !{(5,2)}*{\bullet}="10"
!{(-1,1)}*{\circ}  !{(0,1)}*{\circ}  !{(1,1)}*{\circ}  !{(2,1)}*{\bullet}="6" !{(3,1)}*{\bullet}="7"  !{(4,1)}*{\bullet}="8"  !{(5,1)}*{\bullet}="9"
!{(-1,0)}*{\circ}  !{(0,0)}*{\circ}   !{(1,0)}*{\circ}  !{(2,0)}*{\bullet}="5"  !{(3,0)}*{\circ}   !{(4,0)}*{\circ}  !{(5,0)}*{\circ}
!{(-1,-1)}*{\bullet} ="1" !{(0,-1)}*{\bullet}="2"   !{(1,-1)}*{\bullet}="3"  !{(2,-1)}*{\bullet}="4" !{(3,-1)}*{\circ}   !{(4,-1)}*{\circ}  !{(5,-1)}*{\circ}
%%%%%%%
"1":"2"     "2":"3"  "3":"4"  "4":"5" "5":"6"  "6":"7"  "7":"8"  "8":"9"  "9":"10"  "10":"11"
}\qquad\qquad\qquad\qquad
\xygraph{
!{<0cm,0cm>;<0.6cm,0cm>:<0cm,0.6cm>::}
!{(-1,3)}*{\circ}  !{(0,3)}*{\circ}   !{(1,3)}*{\circ}  !{(2,3)}*{\circ} !{(3,3)}*{\circ}   !{(4,3)}*{\circ}  !{(5,3)}*{\bullet}="8"
!{(-1,2)}*{\circ}  !{(0,2)}*{\circ}   !{(1,2)}*{\circ}  !{(2,2)}*{\circ} !{(3,2)}*{\circ}   !{(4,2)}*{\circ}  !{(5,2)}*{\bullet}="7"
!{(-1,1)}*{\circ}  !{(0,1)}*{\circ}  !{(1,1)}*{\circ}  !{(2,1)}*{\circ} !{(3,1)}*{\bullet}="5"  !{(4,1)}*{\bullet}="6"  !{(5,1)}*{\circ}
!{(-1,0)}*{\circ}  !{(0,0)}*{\circ}   !{(1,0)}*{\circ}  !{(2,0)}*{\bullet}="4"  !{(3,0)}*{\circ}   !{(4,0)}*{\circ}  !{(5,0)}*{\circ}
!{(-1,-1)}*{\bullet} ="1" !{(0,-1)}*{\bullet}="2"   !{(1,-1)}*{\bullet}="3"  !{(2,-1)}*{\circ} !{(3,-1)}*{\circ}   !{(4,-1)}*{\circ}  !{(5,-1)}*{\circ}
%%%%%%%
"1":"2"     "2":"3"  "3":"4"  "4":"5" "5":"6"  "6":"7"  "7":"8" 
}$$
\end{example}

\begin{lem}\label{sharplink}
Let $\phi:\langle n+1\rangle \to\langle p+1\rangle \times\langle q+1\rangle $ be a sharp path. Then 
\begin{enumerate} \item $supp(\phi_1)\cap supp(\phi_2)\ = \emptyset $  \item $supp(\phi_1)\cup supp(\phi_2)\ = \{\bar{0},\ldots,\bar{n}\}$ if $\phi$ is Delannoy;   \item There is a unique lifting $\psi$ of $\phi$ 
\item The linking number of $\phi$ is equal to complexity of $\psi.$ 
\end{enumerate}
\end{lem}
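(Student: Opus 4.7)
The plan is to unpack the sharpness hypothesis generator by generator for (1)--(3), then invoke the bijection from the preceding lemma (between shufflings of $\phi$ and liftings of $\phi$, with length equal to complexity) for (4). Each claim reduces to a direct observation about the possible values of $\phi(\bar i)$, so no clever construction is required.

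For (1), sharpness forbids $\phi(\bar i)$ from having the form $(\bar s, \bar t)$ with both coordinates nontrivial, so every $\phi(\bar i)$ has at most one non-identity coordinate. Hence no generator $\bar i$ can simultaneously lie in $supp(\phi_1)$ and $supp(\phi_2)$, proving disjointness. For (2), if $\phi$ is also injective (as a Delannoy path must be) then $\phi(\bar i)\neq (id,id)$; combined with sharpness this forces $\phi(\bar i)$ to be exactly one of $(\bar s, id)$ or $(id, \bar t)$. Each $\bar i$ then contributes to exactly one support, and together the two supports exhaust $\{\bar 0,\ldots,\bar n\}$.

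For (3), I use the fact that $\langle p+1\rangle \boxx \langle q+1\rangle$ and $\langle p+1\rangle \times \langle q+1\rangle$ share the same object set, but in the funny tensor product the horizontal and vertical generators no longer commute, so a horizontal-then-vertical composite is distinct from the vertical-then-horizontal one. Consequently an arrow of the form $(\bar s, id)$, $(id, \bar t)$, or an identity admits a unique lift to $\boxx$, whereas an arrow $(\bar s, \bar t)$ with both coordinates nontrivial admits exactly two lifts. Sharpness excludes the ambiguous case for every $\phi(\bar i)$, so concatenating the unique individual lifts yields the unique lifting $\psi$; composability at the joins is automatic because source and target objects in the two constructions agree.

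For (4), the preceding lemma provides a bijection between shufflings of $\phi$ and liftings of $\phi$ under which length matches complexity. By (3) there is a unique lifting $\psi$, hence a unique shuffling; its length is simultaneously the complexity of $\psi$ and, being the minimum over a one-element set, equals $\link(\phi)$. The only point requiring care, if any, is the bookkeeping check in (3) that the local unique lifts glue consistently, but this is immediate from the matching of objects between $\boxx$ and $\times$; no further obstacles are anticipated.
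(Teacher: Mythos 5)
Your proposal is correct and matches the paper's (unwritten) argument: the paper's proof is literally ``Obvious from definitions,'' and your write-up simply supplies the routine definition-unpacking for (1)--(3) plus the appeal to the shuffling--lifting bijection for (4), which is exactly the intended reasoning.
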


\begin{proof} Obvious from definitions.

\end{proof}

\begin{lem}\label{sharpening} Let   $\phi:\langle n+1\rangle \to\langle p+1\rangle \times\langle q+1\rangle $ be a Dellanoy path.
Then there exists a factorisation 
$$ \langle n+1\rangle {\to}\langle n'+1\rangle \stackrel{\phi'}{\to}\langle p+1\rangle \times\langle q+1\rangle $$
such that $\phi'$ is sharp and 
$\link(\phi') = \link(\phi).$
\end{lem}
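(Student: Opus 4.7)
The plan is to construct $\phi'$ by ``resolving'' each diagonal step of $\phi$ into a pair of axis-parallel steps, using an optimal lifting of $\phi$ to decide in which order (horizontal-then-vertical or vertical-then-horizontal) each resolution is performed. This will make the resulting path sharp while preserving the underlying lattice trajectory, and hence the linking number.

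In detail, choose a lifting $\psi:\langle n+1\rangle\to\langle p+1\rangle\boxx\langle q+1\rangle$ of $\phi$ whose complexity realises $\link(\phi)$ (this exists by the bijective correspondence between shufflings and liftings established in the previous lemma). Because $\phi$ is Delannoy, for each generator $\bar i$ the value $\phi(\bar i)$ is one of $(\bar s,id)$, $(id,\bar t)$ or $(\bar s,\bar t)$; in the last (diagonal) case $\psi(\bar i)$ must equal either $(\bar s,id)\circ(id,\bar t)$ or $(id,\bar t)\circ(\bar s,id)$ inside the funny tensor product. Let $D\subseteq\{0,\ldots,n\}$ be the set of diagonal indices and set $n':=n+|D|$. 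Define the factoring functor $\langle n+1\rangle\to\langle n'+1\rangle$ on objects by $i\mapsto i+|\{j\in D\colon j<i\}|$; this sends each non-diagonal generator to a single generator of $\langle n'+1\rangle$ and each diagonal generator to a composite of two consecutive ones. Define $\phi':\langle n'+1\rangle\to\langle p+1\rangle\times\langle q+1\rangle$ on the generators of $\langle n'+1\rangle$ by: the generator coming from a non-diagonal $\bar i$ is sent to $\phi(\bar i)$, and the two generators replacing a diagonal $\bar i$ are sent (in order) to the two factors appearing in $\psi(\bar i)$, according to the choice made by $\psi$.

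By construction $\phi'$ is surjective, injective and $\phi'(\bar k)$ is never of the form $(\bar s,\bar t)$, so $\phi'$ is a sharp Delannoy path; also by construction the composite of $\phi'$ with the factoring functor equals $\phi$ (in the commutative product each pair $(\bar s,id),(id,\bar t)$ composes to $(\bar s,\bar t)$, irrespective of the order). It remains to prove $\link(\phi')=\link(\phi)$. Since $\phi'$ is sharp, Lemma~\ref{sharplink} gives a unique lifting $\psi'$ of $\phi'$ and $\link(\phi')$ equals the complexity of $\psi'$. On the other hand, lifting $\phi'$ and then composing with the factoring map produces a lifting of $\phi$ which, by our choice of orderings, coincides with $\psi$. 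The underlying lattice trajectories of $\psi$ and $\psi'$ are literally the same sequence of unit NE steps in $\langle p+1\rangle\boxx\langle q+1\rangle$; the factoring map only regroups consecutive steps into single generators and cannot create or destroy direction changes. Hence the complexities of $\psi$ and $\psi'$ agree, giving $\link(\phi')=\mathrm{complexity}(\psi')=\mathrm{complexity}(\psi)=\link(\phi)$.

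The potentially delicate point is the last equality of complexities: one must verify that the notion of ``number of direction changes'' in a lattice path is insensitive to the parametrisation, i.e.\ that inserting an extra vertex in the middle of two consecutive moves going the same direction does not change the count, while the split of a diagonal step into two moves agrees with a genuine corner (or absence thereof) already present in $\psi$. This is where the careful matching with the ordering dictated by $\psi$ is essential---any other resolution of the diagonals could introduce spurious corners and raise the complexity above $\link(\phi)$, so the appeal to a \emph{minimal} lifting is what drives the argument.
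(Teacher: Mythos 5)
Your proof is correct and follows essentially the same route as the paper: both take a minimal lifting $\psi$ realising $\link(\phi)$, split exactly the diagonal generators into two consecutive ones in the order dictated by $\psi$, and observe that the resulting sharp path $\phi'$ has the same lifting trajectory and hence the same linking number. The paper simply writes out the splitting functor $\delta$ and the path $\psi'$ by explicit formulas, while you supply a slightly more careful justification of the final equality of complexities via the uniqueness of liftings of sharp paths.
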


\begin{proof} Let $\psi$ be a lifting of $\phi$ of complexity $\link(\phi).$  Let $\{\bar{i}_1,\ldots,\bar{i}_k\}, \ k \le \link(\phi)$  be a subset of generators of $\langle n+1\rangle$
which get sent by $\psi$ to the the composites  of the form $(\bar{s},id)\circ(id,\bar{t})$ or $(id,\bar{t})\circ(\bar{s},id)$ (these are exactly corners of $\psi$ which are cut in $\phi$). 
Let $n'= n+k$ and let $\delta:\langle n+1\rangle \to \langle n'+1\rangle$ is defined as follows:

 $$\delta(\bar{j}) = \left\{
\begin{array}{lcc}
\bar{j}& , & 0\le j<i_1  \\
\overline{j+m}& , &i_m< j < i_{m+1}\\
\overline{i_m+m-1}\circ \overline{i_m+m}& , & j=i_m \\
\overline{j+k}& , & i_k < j < n+1
\end{array}
\right.$$
\noindent and $\psi':\langle n'+1\rangle \stackrel{\phi'}{\to}\langle p+1\rangle \boxx \langle q+1\rangle $ is the lattice path defined on generators by 
$\psi'(\overline{j+m}) = \psi(\bar{j})$ for $i_m<j<i_{m+1}$ and $\psi'(\overline{i_m+m-1}) = (\bar{s},id) \ , \ \psi'(\overline{i_m+m}) = (id, \bar{t})$ if $\psi(\bar{i}_m) = (\bar{s},id)\circ(id,\bar{t}), $ and 
$\psi'(\overline{i_m+m-1}) = (id,\bar{t}) \ , \ \psi'(\overline{i_m+m}) = (\bar{s},id)$ if $\psi(\bar{i}_m) = (id,\bar{t})\circ(\bar{s},id).$

It is obvious from the construction that we have a factorisation of the lattice path $\psi$ as 
$$ \langle n+1\rangle \stackrel{\delta}{\to}\langle n'+1\rangle \stackrel{\psi'}{\to}\langle p+1\rangle \boxx \langle q+1\rangle .$$
Then the composite:
$$\phi': \langle n'+1\rangle \stackrel{\psi'}{\to}\langle p+1\rangle \boxx \langle q+1\rangle \to \langle p+1\rangle \times\langle q+1\rangle$$
is a sharp path and we have a factorisation of $\phi$ as $\delta$ followed by $\phi'$. It is also obvious that $\link(\phi') = \link(\phi).$
  
\end{proof}

%\begin{defin} A path 
%$$\phi:\langle n+1\rangle \to \langle n_1 +1\rangle\times\ldots\boxx\times n_k+1\rangle $$ 
%is called sharp if every projection $\phi_{ij}$ is sharp.

%A lattice path
%$$\psi:\langle n+1\rangle \to \langle n_1 +1\rangle\boxx\ldots\boxx\langle n_k+1\rangle $$ 
%is called sharp if every projection $\psi_{ij}$ is sharp.
%\end{defin}

\begin{lem}\label{sharpening2} Any path  $\phi:\langle n+1\rangle \to\langle p+1\rangle\times\langle q+1\rangle $ admits a factorisation 
\begin{equation}\label{sharpfactor} \langle n+1\rangle \to \langle n'+1\rangle \stackrel{\phi'}{\to} \langle p'+1\rangle \times \langle q'+1\rangle {\to} \langle p+1\rangle \times \langle q+1\rangle \end{equation} such that $\phi'$ is sharp Delannoy and 
$\link(\phi') = \link(\phi).$  

%Any lattice path  $\phi:\langle n+1\rangle \to\langle p+1\rangle \boxx\langle q+1\rangle $ admits a unique factorisation 
%\begin{equation}\label{sharpfactor} \langle n+1\rangle \to \langle n'+1\rangle \stackrel{\phi'}{\to} \langle p'+1\rangle \boxx \langle q'+1\rangle {\to} \langle p+1\rangle \boxx \langle q+1\rangle \end{equation} such that $\phi'$ is sharp and 
%$\cc(\phi') = \cc(\phi).$  

\end{lem}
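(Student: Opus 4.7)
The plan is to simply chain together the three preceding lemmas \ref{cont}, \ref{nond}, and \ref{sharpening}, each of which accomplishes exactly one of the three properties (surjective, injective, sharp) that together make up ``sharp Delannoy'', while preserving the linking number.

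First, I would apply Lemma \ref{cont} to $\phi$ to obtain a factorisation
$$\langle n+1\rangle \stackrel{\bar\phi}{\to} \langle p'+1\rangle \times \langle q'+1\rangle \to \langle p+1\rangle \times \langle q+1\rangle$$
with $\bar\phi$ surjective and $\link(\bar\phi) = \link(\phi)$. Note that the second map in this factorisation is an honest product of maps coming from taking epi-mono factorisations on each factor, so the target $\langle p'+1\rangle \times \langle q'+1\rangle$ is really of product form, as required by the statement.

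Next, I would apply Lemma \ref{nond} to the surjective path $\bar\phi$, producing a further factorisation
$$\langle n+1\rangle \to \langle \tilde n+1\rangle \stackrel{\tilde\phi}{\to} \langle p'+1\rangle \times \langle q'+1\rangle$$
with $\tilde\phi$ injective and $\link(\tilde\phi) = \link(\bar\phi)$. The key observation here is that the construction in Lemma \ref{nond} only collapses generators on which $\bar\phi$ evaluates to $(id,id)$; it never alters the values of $\bar\phi$ on the remaining generators. Consequently $\tilde\phi$ inherits surjectivity from $\bar\phi$, so $\tilde\phi$ is both surjective and injective, i.e.\ it is a Delannoy path.

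Finally, I would apply Lemma \ref{sharpening} to the Delannoy path $\tilde\phi$, obtaining
$$\langle \tilde n+1\rangle \to \langle n''+1\rangle \stackrel{\phi'}{\to} \langle p'+1\rangle \times \langle q'+1\rangle$$
with $\phi'$ sharp Delannoy and $\link(\phi') = \link(\tilde\phi)$. Composing the first three arrows of the three steps into a single map $\langle n+1\rangle \to \langle n''+1\rangle$ (and relabelling $n''$ as $n'$), and concatenating with the projection $\langle p'+1\rangle \times \langle q'+1\rangle \to \langle p+1\rangle \times \langle q+1\rangle$ from step one, yields the required factorisation \eqref{sharpfactor}. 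The chain of equalities $\link(\phi') = \link(\tilde\phi) = \link(\bar\phi) = \link(\phi)$ gives the linking number claim.

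There is essentially no obstacle beyond checking the compatibility of the three steps; the one point worth flagging is verifying that Lemma \ref{nond} preserves surjectivity (so that its output is genuinely Delannoy, making Lemma \ref{sharpening} applicable). This is immediate from inspecting the construction in the proof of Lemma \ref{nond}: cutting out $(id,id)$-loops does not introduce any new non-surjective generators.
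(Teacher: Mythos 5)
Your proposal is correct and is exactly the paper's argument: the paper's proof consists of the single line ``We use Lemmas \ref{cont}, \ref{nond} and \ref{sharpening} consecutively,'' which is precisely the chain you spell out. Your extra check that the construction of Lemma \ref{nond} preserves surjectivity (so that its output is genuinely Delannoy before Lemma \ref{sharpening} is invoked) is a worthwhile detail the paper leaves implicit.
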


\begin{proof}
%It is clear that it will be enough to prove the existence of such a factorisation for any pair pf indices $i<j.$ 
We  use  Lemmas \ref{cont}, \ref{nond} and \ref{sharpening} consecutively. 

\end{proof}

 The following Lemma describes the relations of shuffle paths and linking numbers of their images under $p:\LL\to \MM.$

 \begin{lem}\label{shuflink}
 Let $$\psi: \langle n+1\rangle \stackrel{}{\to} \langle n_1 +1\rangle\boxx\ldots\boxx\langle n_k+1\rangle$$ be a shuffle lattice path.
Then 
\begin{enumerate}
\item The projection $\pi_{ij}(p(\psi))$ is a sharp Dellanoy path for any $1\le i<j\le k;$ 
 \item $supp(\pi_i(p(\psi)))\cap\ldots\cap supp(\pi_i(p(\psi)))\ = \emptyset $  and $supp(\pi_1(p(\psi))\cup\ldots\cup supp(\pi_k(p(\psi)))\ = \{\bar{0},\ldots,\bar{n}\},$ where $\pi_i$ is the projection on $i$-th coordinate;  
\item   $\link(\pi_{ij}(p(\psi))=\cc_{ij}(\psi).$  
\end{enumerate}
\end{lem}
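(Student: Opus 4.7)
My plan is to derive all three items essentially from the defining property of a shuffle lattice path, that each generator $\bar{k}$ of $\langle n+1\rangle$ is sent by $\psi$ to an elementary move in exactly one of the $k$ tensor factors $\langle n_\ell +1\rangle$ (this is what it means to be a shuffle in the lattice path operad $\LL$ of Appendix A). Passing to the cartesian projection $p$ only replaces $\boxx$ by $\times$ without altering this one-factor-at-a-time structure, so each generator $\bar{k}$ lies in $\text{supp}(\pi_\ell(p(\psi)))$ for exactly one $\ell$. This observation already gives (2): the sets $\text{supp}(\pi_\ell(p(\psi)))$ form a partition of $\{\bar{0},\ldots,\bar{n}\}$, so any two of them are disjoint and their union is everything.

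For (1), fix $1\le i<j\le k$ and look at $\pi_{ij}(p(\psi))$. By the shuffle property, each generator $\bar{s}$ is sent to $(\bar{u},\mathrm{id})$, $(\mathrm{id},\bar{v})$, or $(\mathrm{id},\mathrm{id})$ (depending on whether the unique non-identity coordinate is $i$, $j$, or neither), but never to $(\bar{u},\bar{v})$. Thus $\pi_{ij}(p(\psi))$ is surjective (each step adds $0$ or $1$ to each coordinate) and satisfies the sharpness condition $\pi_{ij}(p(\psi))(\bar{s})\neq(\bar{u},\bar{v})$. Applying the canonical injective factorization of Lemma \ref{nond} to remove the $(\mathrm{id},\mathrm{id})$-steps produces a path which is simultaneously surjective, injective, and sharp, hence a sharp Delannoy path, since collapsing identity steps cannot create a diagonal $(\bar{u},\bar{v})$-step.

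For (3), I plan to use Lemma \ref{sharplink}: a sharp Delannoy path has a unique lifting to the funny product, and its linking number equals the complexity of that lifting. The candidate lifting is the restriction of $\psi$ to the $ij$-factor, $\pi_{ij}(\psi)\colon\langle n'+1\rangle\to\langle n_i+1\rangle\boxx\langle n_j+1\rangle$, taken after the same identity-removal as in part (1). By construction it projects onto $\pi_{ij}(p(\psi))$ under $\boxx\to\times$, and its elementary moves alternate between $(\bar{u},\mathrm{id})$-type and $(\mathrm{id},\bar{v})$-type edges. By uniqueness, this is the lifting referred to in Lemma \ref{sharplink}(4). The complexity of this lifting, i.e.\ the number of times it changes direction between the two factors, is precisely $\cc_{ij}(\psi)$ by the definition of the complexity function of a shuffle lattice path (the number of times $\psi$ switches between the $i$-th and $j$-th coordinate of the $k$-tuple). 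Therefore $\link(\pi_{ij}(p(\psi)))=\cc_{ij}(\psi)$, as required.

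The main point requiring care is the bookkeeping around the injective factorization used in (1): one must verify that deleting $(\mathrm{id},\mathrm{id})$-steps in $\pi_{ij}(p(\psi))$ matches up bijectively with deleting the corresponding "other-coordinate" steps in $\pi_{ij}(\psi)$, so that the unique lift really is produced from $\psi$ itself and so that the direction-change count is preserved. Once this is cleanly stated, (1), (2), (3) all fall out of the shuffle property together with Lemma \ref{sharplink}, so no genuine combinatorial difficulty remains.
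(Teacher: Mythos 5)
Your proof is correct and follows exactly the route the paper intends: the paper's own proof is the one-line "easily follows from Lemma \ref{sharplink}," and your write-up simply supplies the details (the one-coordinate-per-generator property of shuffle paths, the injective factorization of Lemma \ref{nond} to dispose of the $(\mathrm{id},\mathrm{id})$-steps when $k>2$, and the identification of the unique lifting with $\psi_{ij}$). No substantive difference.
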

 
\begin{proof} 
Easily follows from Lemmas  \ref{sharplink}.  \end{proof}

%\subsection{Smooth paths}

We will need several Lemmas about smooth paths behaviour. 

This Lemma shows that connects our notion of linking numbers to the Steenrod-McClure-Smith notion of overlapping subdivisions. 

\begin{lem} \label{overlap} Let $\phi:\langle n+1\rangle \to\langle p+1\rangle \times\langle q+1\rangle $ be a smooth path. Then
$$|supp(\phi_1)\cap supp(\phi_2)|\ = \link(\phi)\ .$$ \end{lem}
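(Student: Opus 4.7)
The general inequality $|supp(\phi_1)\cap supp(\phi_2)|\le \link(\phi)$ was observed earlier, so what the lemma really asks for is the reverse inequality $\link(\phi)\le |supp(\phi_1)\cap supp(\phi_2)|$ under the smoothness hypothesis. Using the bijection between shufflings of length $n$ and liftings of complexity $n$ proved above, it is enough to construct one lifting $\psi$ of $\phi$ to $\langle p+1\rangle\boxx\langle q+1\rangle$ whose complexity equals $k:=|supp(\phi_1)\cap supp(\phi_2)|$.

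The first step is to read off the canonical decomposition of $\phi$ into homogeneous blocks. Let $i_1<\ldots<i_k$ be the positions of the $k$ diagonal steps, i.e.\ those $i$ with $\phi(\bar i)=(\bar s,\bar t)$. Because $\phi$ is a smooth Delannoy path, the non-diagonal steps strictly between two consecutive diagonals (and also before $i_1$ or after $i_k$) are all of one and the same type — purely horizontal or purely vertical — since any $H$-step adjacent to a $V$-step inside such a segment would create a low or upper corner. Hence I can write $\phi$ as a concatenation
\[T_0,\ D_1,\ T_1,\ D_2,\ \ldots,\ D_k,\ T_k,\]
where each $T_j$ is either empty, a pure $H$-run, or a pure $V$-run.

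Next I will build $\psi$ by lifting each non-diagonal step by its unique lift and orienting each diagonal $D_j=(\bar s_j,\bar t_j)$ either as $(\bar s_j,id)\circ(id,\bar t_j)$ or as $(id,\bar t_j)\circ(\bar s_j,id)$. The orientation is chosen so that the first elementary factor of the lifted $D_j$ agrees with the type of $T_{j-1}$ and the second factor agrees with the type of $T_j$; an empty $T_j$ imposes no constraint. With this rule the homogeneous blocks contribute no direction change, no change arises across the boundary between a diagonal and either adjacent block, and each lifted diagonal contributes exactly one internal direction change. Consequently $\cc(\psi)=k$ and $\link(\phi)\le k$, giving the desired equality.

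The only delicate point — and the one place where smoothness is used essentially — is the global consistency of the orientation prescription. It is precisely smoothness that guarantees that the runs on either side of each diagonal have well-defined, compatible types, so the local choices assemble into a well-defined lift of complexity exactly $k$ rather than being forced to pay extra corners at the diagonal boundaries. Once this consistency is verified, the lemma follows.
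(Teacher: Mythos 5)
Your reduction of the lemma to producing one lifting of complexity $k=|supp(\phi_1)\cap supp(\phi_2)|$, and your decomposition of $\phi$ into homogeneous runs $T_0,\dots,T_k$ separated by the $k$ diagonal steps $D_1,\dots,D_k$, are both correct and are in the same spirit as the paper's one-line proof (which asserts a minimal shuffling whose consecutive terms overlap in single elements). The gap is exactly at the point you flag and then wave through: the orientation prescription is \emph{not} always consistent, and smoothness as defined does not guarantee consistency. Your rule asks the first elementary factor of the lifted $D_j$ to have the type of $T_{j-1}$ and the second to have the type of $T_j$; since the two factors of a lifted diagonal are necessarily of different types (one horizontal, one vertical), this is impossible whenever $T_{j-1}$ and $T_j$ are both nonempty of the same type, and that configuration does occur in smooth paths. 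Concretely, take $\phi:\langle 3\rangle\to\langle 3\rangle\times\langle 1\rangle$ with $\phi(\bar 0)=(\bar 0,id)$, $\phi(\bar 1)=(\bar 1,\bar 0)$, $\phi(\bar 2)=(\bar 2,id)$ (a horizontal step, a diagonal, a horizontal step). This is a smooth Delannoy path: there is no generator mapped to $(\bar s,id)$ immediately followed by one mapped to $(id,\bar t)$, nor the other way round. Here $supp(\phi_1)=\{\bar 0,\bar 1,\bar 2\}$ and $supp(\phi_2)=\{\bar 1\}$, so $k=1$; but its only two liftings are, in time order, $H,H,V,H$ and $H,V,H,H$, each with two corners, and no shuffling of length $1$ exists since neither $A\le B$ nor $B\le A$ holds for $A=\{\bar 0,\bar 1,\bar 2\}$, $B=\{\bar 1\}$. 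Hence $\link(\phi)=2>1=k$, so the construction cannot be completed and the lemma itself fails for this path.

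This is therefore not a repairable defect of your write-up but a missing hypothesis in the statement. Propagating your own constraints (the exit direction of $D_j$ must equal the entry direction of $D_{j+1}$, and a nonempty $T_j$ pins that common value to its type) shows that a complexity-$k$ lifting exists if and only if any two nonempty runs $T_i,T_j$ have the same type exactly when $j-i$ is even; this alternation condition is what is really needed, and it does hold for the smooth paths the paper actually uses afterwards (the projections of normal lattice paths in $\slp^{(n)}$, which arise from alternating shuffle words), but not for all smooth paths: the smoothing procedure of Lemma \ref{smoothfactorisation} applied to the sharp path $H,V,H$ produces precisely the counterexample above. The paper's own proof rests on the same unproved assertion ("there exists a minimal shuffling for which any two consecutive terms have exactly one element in their intersection"), which fails for the same example, so you have faithfully reproduced the intended argument, gap included; to make either version correct you must either add the alternation of run types to the hypotheses or restrict the lemma to the class of smooth paths for which it is later invoked.
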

\begin{proof}
It is clear that for a smooth path there exists a minimal shuffling for which any two consecutive terms have exactly one element in their intersection. This proves the statement.
\end{proof}

\begin{remark} This Lemma shows that for any smooth path $\phi$ with $\link(\phi) = m$ the set of generating morphisms $\{\bar{0},\ldots,\bar{n}\}$ admits an overlapping partition with $m+1$ pieces in the sense of \cite[Definition 2.3]{ms}. \end{remark}

Similar to the sharp case there  is  a useful factorisation involving smooth paths. Such a factorisation is not unique sough. 

\begin{lem}\label{smoothfactorisation} For any Delannoy path $\phi:\langle n+1\rangle \to\langle p+1\rangle \times\langle q+1\rangle $  there exists (not necessary unique)  a  factorisation
 $$\phi: \langle n+1\rangle \stackrel{\phi'}{\to}   \langle p'+1\rangle \times\langle q'+1\rangle  \to     \langle p+1\rangle \times\langle q+1\rangle $$
such that $\phi'$  is smooth and  $\link(\phi') = \link(\phi).$
  
\end{lem}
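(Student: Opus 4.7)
The plan is to exploit Lemma \ref{overlap}: for a smooth Delannoy path the linking number equals the number of diagonal steps. So I aim to build a smooth $\phi'$ with exactly $m := \link(\phi)$ diagonals, together with a monotone surjection $\sigma$ of its codomain onto $\langle p+1\rangle\times\langle q+1\rangle$ such that $\sigma\circ\phi'=\phi$.

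First, I fix a minimal lifting $\psi:\langle n+1\rangle\to \langle p+1\rangle \boxx \langle q+1\rangle$ of $\phi$, so $\cc(\psi)=m$. Each of its $m$ direction changes is of one of two types: either \emph{internal} to some $\psi(\bar i)$ when $\phi(\bar i)$ is itself a diagonal step (each diagonal of $\phi$ forces exactly one such internal change, since its lifting is a two-move composite in opposite directions), or a \emph{boundary} change occurring between the last elementary move of $\psi(\bar i)$ and the first of $\psi(\bar{i+1})$. If $\phi$ has $d$ diagonals, there are $d$ internal and $b=m-d$ boundary direction changes.

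Next, I construct $\phi'$ by promoting to a diagonal the generator $\bar{i+1}$ at each boundary change of $\psi$: if the change is from R to U, with $\phi(\bar i)=(\bar s,id)$ and $\phi(\bar{i+1})=(id,\bar t)$, I insert a fresh generator $\bar{s'}$ into the first-coordinate interval at the appropriate position and set $\phi'(\bar{i+1})=(\bar{s'},\bar t)$; the U-to-R case is symmetric, using a fresh generator in the second coordinate. All other $\phi(\bar j)$, in particular every original diagonal, are kept unchanged up to relabeling coordinates around the insertions. The enlarged codomain is $\langle p'+1\rangle\times\langle q'+1\rangle$, and $\sigma$ is the product of the two monotone surjections that collapse precisely the inserted generators to identities.

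Finally, four short checks complete the proof: (i) $\sigma\circ\phi'=\phi$ is immediate from the construction; (ii) $\phi'$ is Delannoy since it uses only unit steps and has no stops; (iii) $\phi'$ is smooth — for any adjacent pair $\phi'(\bar j),\phi'(\bar{j+1})$, if either is a diagonal there is no corner, while if both are unchanged non-diagonals then $\bar{j+1}$ was not promoted, so $\psi$ has no boundary direction change between $\psi(\bar j)$ and $\psi(\bar{j+1})$, forcing $\phi(\bar j)$ and $\phi(\bar{j+1})$ to point in the same direction; (iv) by construction $\phi'$ has $d+b=m$ diagonals, so Lemma \ref{overlap} gives $\link(\phi')=m=\link(\phi)$. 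The main obstacle is step (iii): the smoothness argument depends crucially on the minimality of $\psi$, as any superfluous direction change in $\psi$ that had been promoted would create room for an unpromoted pair to still form a corner, and conversely any "missed" direction change would leave a corner intact. Minimality of $\psi$ guarantees that promotions and non-promotions align perfectly.
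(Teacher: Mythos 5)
Your construction has a genuine well-definedness gap. The promotion recipe, and the diagonal count in step (iv), assume that at every boundary direction change of $\psi$ the generator $\overline{i+1}$ is a non-diagonal step of $\phi$; a minimal lifting need not satisfy this. Take $\phi:\langle 3\rangle\to\langle 3\rangle\times\langle 1\rangle$ given by the steps $(\bar 0,id),\ (\bar 1,\bar 0),\ (\bar 2,id)$ (right, diagonal, right). Both liftings have complexity $2=\link(\phi)$, so both are minimal; but for the lifting that realises the diagonal as ``up, then right'' the unique boundary change lies between $\psi(\bar 0)$ and the diagonal $\psi(\bar 1)$, so the generator you would promote is already a diagonal, no promotion happens, and $\phi'$ ends up with $1<m$ diagonals. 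Minimality of $\psi$ alone --- the ingredient you flag as crucial --- does not exclude this. What you actually need is that a minimal lifting can be chosen so that each diagonal of $\phi$ is lifted starting in the direction of the elementary move immediately preceding it; a short local exchange argument shows such a choice never increases the number of direction changes and hence is still minimal, but this has to be stated and proved, since the whole count $d+b=m$ hinges on it.

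A second caution concerns step (iv): the overlap formula of Lemma \ref{overlap} is only reliable when $\phi'$ has no single diagonal separating two runs of the same direction --- for the path right, diagonal, right above the overlap is $1$ while the linking number is $2$ --- so even after repairing the construction you should check that the promoted path avoids this configuration (it does, once diagonals are lifted as above, because every promoted step changes the current direction). For comparison, the paper argues quite differently: it removes one corner of $\phi$ at a time, replacing the second leg of the corner by a diagonal into a freshly inserted column or row, factoring through the collapse map $\partial_{s+1}\times 1$, and asserting that each such step preserves the linking number; iterating kills all corners. Your one-shot construction, once repaired, is arguably more informative, since it exhibits $\link(\phi')$ directly as the number of diagonals of $\phi'$, but as written it does not go through.
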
 We will show that there is a factorisation of the Delannoy path which decrease the number of corners by one but does not change the linking numbers. We can then repeat this procedure until we get a Delannoy path without corners.

\begin{proof} 

Indeed, suppose   $\phi:\langle n+1\rangle \to\langle p+1\rangle \times\langle q+1\rangle $ has a corner at  some point. We can assume that this is a low corner  such that $\phi(\bar{i}) = (\bar{s},id)$ and $\phi(\overline{i+1}) = (id,\bar{t}).$ We then construct a new path
 $\phi':\langle n+1\rangle \to\langle p+2\rangle \times\langle q+1\rangle $ by 
 $$\phi'(\bar{j}) = \left\{
\begin{array}{lccc}
\phi(\bar{j})& , & 0\le j\le i  & \\
(\overline{s+1},\bar{t})& , &j = i+1 & \\
(\overline{a+1},\bar{b})& , & j > i+1, & \phi(j) = (a,b), a\ne id  \\
(id_{k+1},\overline{b})& , & j > i+1, & \phi(j) = (id_k,b), 
\end{array}
\right.$$ 
For example, given a Delannoy path as on the left the procedure applied to the first left corner  produces a path as an the right
$$\xygraph{
!{<0cm,0cm>;<0.6cm,0cm>:<0cm,0.6cm>::}
!{(-1,3)}*{\circ}  !{(0,3)}*{\circ}   !{(1,3)}*{\circ}  !{(2,3)}*{\circ} !{(3,3)}*{\circ}   !{(4,3)}*{\circ}  !{(5,3)}*{\bullet}="11"
!{(-1,2)}*{\circ}  !{(0,2)}*{\circ}   !{(1,2)}*{\circ}  !{(2,2)}*{\circ} !{(3,2)}*{\circ}   !{(4,2)}*{\circ}  !{(5,2)}*{\bullet}="10"
!{(-1,1)}*{\circ}  !{(0,1)}*{\circ}  !{(1,1)}*{\circ}  !{(2,1)}*{\bullet}="6" !{(3,1)}*{\bullet}="7"  !{(4,1)}*{\bullet}="8"  !{(5,1)}*{\bullet}="9"
!{(-1,0)}*{\circ}  !{(0,0)}*{\circ}   !{(1,0)}*{\circ}  !{(2,0)}*{\bullet}="5"  !{(3,0)}*{\circ}   !{(4,0)}*{\circ}  !{(5,0)}*{\circ}
!{(-1,-1)}*{\bullet} ="1" !{(0,-1)}*{\bullet}="2"   !{(1,-1)}*{\bullet}="3"  !{(2,-1)}*{\bullet}="4" !{(3,-1)}*{\circ}   !{(4,-1)}*{\circ}  !{(5,-1)}*{\circ}
%%%%%%%
"1":"2"     "2":"3"  "3":"4"  "4":"5" "5":"6"  "6":"7"  "7":"8"  "8":"9"  "9":"10"  "10":"11"
}\qquad\qquad\qquad\qquad
\xygraph{
!{<0cm,0cm>;<0.6cm,0cm>:<0cm,0.6cm>::}
!{(-1,3)}*{\circ}  !{(0,3)}*{\circ}   !{(1,3)}*{\circ}  !{(2,3)}*{\circ} !{(3,3)}*{\circ}   !{(4,3)}*{\circ}  !{(5,3)}*{\bullet}="10"
!{(-1,2)}*{\circ}  !{(0,2)}*{\circ}   !{(1,2)}*{\circ}  !{(2,2)}*{\circ} !{(3,2)}*{\circ}   !{(4,2)}*{\circ}  !{(5,2)}*{\bullet}="9"
!{(-1,1)}*{\circ}  !{(0,1)}*{\circ}  !{(1,1)}*{\circ}  !{(2,1)}*{\bullet}="5" !{(3,1)}*{\bullet}="6"  !{(4,1)}*{\bullet}="7"  !{(5,1)}*{\bullet}="8"
!{(-1,0)}*{\circ}  !{(0,0)}*{\circ}   !{(1,0)}*{\circ}  !{(2,0)}*{\bullet}="4"  !{(3,0)}*{\circ}   !{(4,0)}*{\circ}  !{(5,0)}*{\circ}
!{(-1,-1)}*{\bullet} ="1" !{(0,-1)}*{\bullet}="2"   !{(1,-1)}*{\bullet}="3"  !{(2,-1)}*{\circ} !{(3,-1)}*{\circ}   !{(4,-1)}*{\circ}  !{(5,-1)}*{\circ}
%%%%%%%
"1":"2"     "2":"3"  "3":"4"  "4":"5" "5":"6"  "6":"7"  "7":"8"  "8":"9"  "9":"10"  
}$$
\\

Now the map $\partial_{s+1}: \langle p+2 \rangle \to \langle p+1 \rangle$ which sends $\overline{s+1}$ to the identity and other generating morphims to the generating morphisms provides a factorisation of $\phi:$
$$\langle n+1\rangle \stackrel{\phi'}{\to}\langle p+2\rangle \times\langle q+1\rangle \stackrel{\partial_{s+1}\times 1}{\longrightarrow}\langle p+1\rangle \times\langle q+1\rangle $$ 
Clearly, the linkage numbers of $\phi$ and $\phi'$ are equal. 
\end{proof}

\begin{cor} \label{corsmooth} Any path  $\phi:\langle n+1\rangle \to\langle p+1\rangle\times\langle q+1\rangle $ admits a factorisation 
\begin{equation}\label{sharpfactor} \langle n+1\rangle \to \langle n'+1\rangle \stackrel{\phi'}{\to} \langle p'+1\rangle \times \langle q'+1\rangle {\to} \langle p+1\rangle \times \langle q+1\rangle \end{equation} such that $\phi'$ is smooth path and 
$\link(\phi') = \link(\phi).$  
\end{cor}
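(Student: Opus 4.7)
The plan is to mimic the proof of Lemma \ref{sharpening2} (which used Lemmas \ref{cont}, \ref{nond}, and \ref{sharpening} in sequence), replacing the sharpening step with its smooth analogue Lemma \ref{smoothfactorisation}.

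More precisely, I would proceed in three successive reductions, each of which produces a factorisation preserving the linking number. First, apply Lemma \ref{cont} to obtain a factorisation
$$\langle n+1\rangle \stackrel{\phi_1}{\to} \langle p''+1\rangle \times \langle q''+1\rangle \to \langle p+1\rangle \times \langle q+1\rangle$$
with $\phi_1$ surjective and $\link(\phi_1)=\link(\phi)$. Second, apply Lemma \ref{nond} to $\phi_1$ to obtain a further factorisation
$$\langle n+1\rangle \to \langle n'+1\rangle \stackrel{\phi_2}{\to} \langle p''+1\rangle \times \langle q''+1\rangle$$
with $\phi_2$ injective and $\link(\phi_2)=\link(\phi_1)$. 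Since $\phi_2$ is both surjective (the first factor in the factorisation of $\phi_1$ was an isomorphism on the relevant target and surjectivity is preserved when we only drop trivial loops) and injective, it is a Delannoy path by definition.

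Third, apply Lemma \ref{smoothfactorisation} to the Delannoy path $\phi_2$ to obtain a factorisation
$$\langle n'+1\rangle \stackrel{\phi'}{\to} \langle p'+1\rangle \times \langle q'+1\rangle \to \langle p''+1\rangle \times \langle q''+1\rangle$$
with $\phi'$ smooth and $\link(\phi')=\link(\phi_2)$. Composing all three factorisations and combining the two rightmost maps into a single map $\langle p'+1\rangle \times \langle q'+1\rangle \to \langle p+1\rangle \times \langle q+1\rangle$ yields the desired factorisation, and the chain of equalities $\link(\phi')=\link(\phi_2)=\link(\phi_1)=\link(\phi)$ gives the linking number condition.

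The only mild subtlety, and the one step I would want to double-check carefully, is that the intermediate path $\phi_2$ produced after the first two reductions is genuinely Delannoy (surjective and injective simultaneously), so that Lemma \ref{smoothfactorisation} applies; this is where one has to verify that the second reduction does not destroy surjectivity produced by the first. This is immediate from the explicit construction in Lemma \ref{nond}: one only collapses generators sent to $(id,id)$, which are not in the image of either projection, so surjectivity in each coordinate is preserved.
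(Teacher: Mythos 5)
Your proof is correct and follows essentially the same route as the paper, which simply cites Lemma \ref{sharpening2} together with Lemma \ref{smoothfactorisation}; you have merely unfolded Lemma \ref{sharpening2} into its constituents (Lemmas \ref{cont} and \ref{nond}) and observed that the sharpening substep can be skipped, since Lemma \ref{smoothfactorisation} applies to any Delannoy path, not only a sharp one. Your check that the injective reduction of Lemma \ref{nond} preserves surjectivity (so that the intermediate path is genuinely Delannoy) is exactly the right point to verify, and your justification of it is correct.
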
 

\begin{proof} It follows easily from Lemmas  \ref{sharpening2} and \ref{smoothfactorisation}.
\end{proof}

We will use Lemma \ref{sharpening2} and Corollary \ref{corsmooth} to obtain two slightly different descriptions of the operad $\MM^{(n)}.$ These descriptions will be very useful later.  

\begin{defin} 
A sharp $n$-commutative cosimplicial monoid is a cosimplicial monoid which satisfies the $n$-commutativity condition with respect to any pair of maps in $\De$ represented by a sharp Delannoy path. 
\nl
Similarly, a smooth $n$-commutative cosimplicial monoid is a cosimplicial monoid which satisfies the $n$-commutativity condition with respect to any pair of maps in $\De$ represented by a smooth path.
\end{defin}

 Let $(\phi,\sigma)$ be an element of $\MM^{(0)}(p,q;k).$
 We  say that $(\phi,\sigma)$ is {\em sharply $n$-equivalent} to $(\phi,\tau\sigma)$ if 
 $\phi$ is a sharp Delannoy path and $\link(\phi) \le n.$  We  say that $(\phi,\sigma)$ is {\em smoothly $n$-equivalent} to $(\phi,\tau\sigma)$ if $\phi$ is a sharp path and $\link(\phi) \le n.$  

Let $\MM^{(n)}_{sh}$ be an operad constructed as a quotient  of $\MM^{(0)}$ by the equivalence relation generated by sharp $n$-equivalence relation and $\MM^{(n)}_{sm}$ be an operad constructed as a quotient  of $\MM^{(0)}$ by the equivalence relation generated by smooth $n$-equivalence relation. 
 
\begin{thm} \label{sharpeq} 
The operads $\MM^{(n)}_{sh}, \MM^{(n)}_{sm}$ and  $\MM^{(n)}$ are canonically isomorphic.  
\end{thm}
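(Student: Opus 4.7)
The plan is to show that the canonical surjections $p_{sh}:\MM^{(n)}_{sh}\twoheadrightarrow\MM^{(n)}$ and $p_{sm}:\MM^{(n)}_{sm}\twoheadrightarrow\MM^{(n)}$, which exist automatically because sharp Delannoy paths and smooth paths form subclasses of arbitrary paths (so both refined equivalences are specialisations of the general $n$-equivalence), are in fact isomorphisms. This amounts to verifying that every generating relation of the general quotient $\MM^{(n)}$ already holds in each of $\MM^{(n)}_{sh}$ and $\MM^{(n)}_{sm}$.

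Concretely, given a defining $n$-equivalence $(\phi,e)\sim(\phi,t)$ of $\MM^{(n)}$, where $\phi$ is an arbitrary arity-$2$ path with $\link(\phi)\le n$, I would invoke Lemma \ref{sharpening2} in the sharp case and Corollary \ref{corsmooth} in the smooth case to obtain a factorisation
$$\phi:\langle n{+}1\rangle\xrightarrow{\alpha}\langle n'{+}1\rangle\xrightarrow{\phi'}\langle p'{+}1\rangle\times\langle q'{+}1\rangle\xrightarrow{\beta_1\times\beta_2}\langle p{+}1\rangle\times\langle q{+}1\rangle,$$
in which $\phi'$ is sharp (resp.\ smooth) and $\link(\phi')=\link(\phi)\le n$. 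The maps $\alpha,\beta_1,\beta_2$ are unary morphisms of $\MM$, so this factorisation lifts to an operadic identity in $\MM^{(0)}=\MM\times\Ass$ expressing $(\phi,\sigma)$ as a composition of $(\phi',\sigma)$ with the unary operations $(\alpha,e)$ and $(\beta_i,e)$ for each $\sigma\in\Sigma_2$; crucially, all the surrounding unary factors carry trivial permutation components, so they are the same on both sides $\sigma=e$ and $\sigma=t$.

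Since $(\phi',e)\sim(\phi',t)$ is by construction a defining relation of $\MM^{(n)}_{sh}$ (resp.\ $\MM^{(n)}_{sm}$), applying the above operadic identity to both sides of this relation and using the compatibility of operadic composition with the quotient (which is automatic, since $\MM^{(n)}_{sh}$ and $\MM^{(n)}_{sm}$ are built as pushouts in $\mathbb{N}$-coloured operads exactly as $\MM^{(n)}$ was in the excerpt) yields $(\phi,e)\sim(\phi,t)$ in both refined quotients. This shows that $p_{sh}$ and $p_{sm}$ have trivial kernel as operad maps, hence are isomorphisms, and the three operads agree canonically.

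The main obstacle I anticipate is purely bookkeeping: writing the factorisation produced by Lemma \ref{sharpening2} and Corollary \ref{corsmooth} as a genuine operadic composite in $\MM\times\Ass$ with the correct attachment of unary operations at the input and output slots, and verifying that no spurious permutation enters through the pre- and post-composition. Once that identity is in place, propagating the sharp or smooth equivalence along it is a formal consequence of the quotient construction.
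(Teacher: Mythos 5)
Your proposal is correct and takes essentially the same route as the paper: both arguments rest on the factorisations supplied by Lemma \ref{sharpening2} and Corollary \ref{corsmooth}, which replace an arbitrary path of linking number at most $n$ by a sharp (resp.\ smooth) one of the same linking number, and then transport the commutation relation along the surrounding unary operations. The only difference is cosmetic: you verify the extra relations directly in the operadic presentation, whereas the paper checks them on algebras in $\bSet$ by showing that a sharp (smooth) $n$-commutative cosimplicial monoid is automatically $n$-commutative.
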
 
\begin{proof}  
It is obvious that there is a morphism 
$\MM^{(n)}_{sh}\to  \MM^{(n)}$ and so every $n$-commutative cosimplicial monoid is a sharp $n$-commutative cosimplicial monoid. To prove that  $\MM^{(n)}_{sh}\to  \MM^{(n)}$ is an isomorphism is enough to show that a sharp $n$-commutative cosimplicial monoid $E$ in $\bSet$ is also an $n$-commutative cosimplicial monoid. For that, let $$ \phi:\langle k+1\rangle \to \langle p+1\rangle \times \langle q+1\rangle $$ be a path such that $\link(\phi) = n.$ 
By Lemma \ref{sharpening2} we have a factorisation
$$\langle k+1\rangle \to \langle k'+1\rangle \stackrel{\phi'}{\to} \langle p'+1\rangle \times \langle q'+1\rangle {\to} \langle p+1\rangle \times \langle q+1\rangle $$
where $\phi'$ is sharp and $\link(\phi)=\link(\phi').$ Then  the images of two elements from $E(p)$ and $E(q)$ commute in $E(k')$ and, hence, in $E(k).$ 
\nl
The result for smooth case follows from Corollary \ref{corsmooth} by a similar argument. 
\end{proof} 

\subsection{Smooth paths with linking numbers $1$ and $2$}\label{smooth paths lk=1.2} 

It is instructive and will be useful in applications to see what Theorem \ref{sharpeq} gives us for $n$ equal one and two. 

There are exactly two  smooth paths of complexity one on a rectangle. The first one is given by a map  $\eta_{m,n}:\langle m+n+1\rangle\to \langle m+1\rangle \times \langle n+1\rangle$, sending 
\begin{equation}\label{etanm}\mbox{$ i\mapsto \left\{\begin{array}{cc} (i,0),& 0\leq i\leq m\\  (m+1,i-m),& m+1\leq i\leq m+n+1\end{array}\right.$} \end{equation}
It has a (unique, for $m+n>0$) lifting $\langle m+n+1\rangle\to \langle m+1\rangle \boxx \langle n+1\rangle$ of complexity 1. 
Graphically:
\bigskip
$$\xygraph{
!{<0cm,0cm>;<1cm,0cm>:<0cm,1cm>::}
!{(-2,2)}*{\circ} ="uull" !{(-1,2)}*{\circ} ="uul" !{(0,2)}*{\hdots} ="uu"  !{(1,2)}*{\circ}="uur"  !{(2,2)}*{\circ}="urr"
!{(-2,1)}*{\circ} ="ull" !{(-1,1)}*{\circ} ="ul" !{(0,1)}*{} ="u"  !{(1,1)}*{\circ}="ur"  !{(2,1)}*{\circ}="rur"
!{(-2,0)}*{\circ} ="ll" !{(-1,0)}*{\circ} ="l" !{(0,0)}*{} ="m"  !{(1,0)}*{\circ}="r"  !{(2,0)}*{\circ}="rr"
!{(-2,-1)}*{\circ} ="dll" !{(-1,-1)}*{\circ} ="dl" !{(0,-1)}*{\hdots} ="d"  !{(1,-1)}*{\circ}="dr"  !{(2,-1)}*{\circ}="drr"
%!{(-1,-.78)}*+{\scriptstyle 1}  !{(-.18,0)}*+{\scriptstyle 1} !{(1,.78)}*+{\scriptstyle 1}  !{(2.2,2)}*+{\scriptstyle 1}
%!{(-.15,1.15)}*+{\scriptstyle 0}  !{(.15,-1.15)}*+{\scriptstyle 0}  !{(2.15,.85)}*+{\scriptstyle 0}
!{(-2,.6)}*+{\vdots}  !{(2,.6)}*+{\vdots}
%%%%%%%
"dll":"dl"     "dr":"drr" 
"drr":"rr"     "rur":"urr"
"dr"-@{..>}"rr"   
}
$$
Here the path  $\eta_{m,n}$ is in dotted arrows and its lifting is in unbroken arrows with the multiplicity of the corner point being zero and the multiplicities of all other points being one.

The other path $\eta^t_{n,m}$ goes around boundary of the rectangle in the clockwise direction. Observe that the Joyal dual to the projection on the first coordinate of the path $\eta_{n,m}$ is the map $\tau_{n,m}$ in $\De$ introduced in the Example \ref{elno}. The Joyal dual to the second projection is $\pi_{n,m}$. Similarly for the path $\eta^t_{n,m}.$ By Corollary \ref{corsmooth} and Theorem \ref{sharpeq} we have the following:

\ble\lb{lno}
For  any two maps in $\De$ $\tau:[p]\to [r],\ \pi:[q]\rightarrow [r]$ with linking number one there are $n$ and order preserving maps $\phi:[p]\to [n],\ \psi:[q]\to [r-n]$ such that $\tau = \tau_{r-n,n}\phi$ and $\pi = \pi_{r-n,n}\psi$.

Thus a cosimplicial monoid is $1$-commutative if and only if it satisfies $1$-commutativity condition with respect to all maps $\tau_{n,m}$ and $\pi_{n,m}.$
\ele
%\bpf
%Since the linking number of $\tau$ and $\pi$ is one we can assume that $Im(\tau)\leq Im(\pi)$ (swapping $\tau$ and $\pi$ if necessary). 
%Let $n$ be the minimum of $Im(\pi)$. 
%Then $Im(\tau)\subset \{0,...,n\} = Im(\tau_{r-n,n})$ and $Im(\pi)\subset \{n,...,r\} = Im(\pi_{r-n,n})$.
%Since $\tau_{r-n,n}$ and $\pi_{r-n,n}$ are monomorphisms there are order preserving maps $\phi:[p]\to [n], \psi:[q]\to [r-n]$ such that $\tau = \tau_{r-n,n}\phi$ and $\pi = \pi_{r-n,n}%%\psi$.
%\epf

A typical smooth path with linking number two has the following form 

\begin{equation} \label{etanmi}
\xygraph{
!{<0cm,0cm>;<1cm,0cm>:<0cm,1cm>::}
!{(-2,2)}*{\circ} ="uull" !{(-1,2)}*{\circ} ="uul" !{(0,2)}*{\hdots} ="uu"  !{(1,2)}*{\circ}="uur"  !{(2,2)}*{\circ}="urr"
!{(3,2)}*{\circ} ="u3" !{(4,2)}*{\hdots} ="u4"  !{(5,2)}*{\circ}="u5"  !{(6,2)}*{\circ}="u6"
!{(-2,1)}*{\circ} ="ull" !{(-1,1)}*{\circ} ="ul" !{(0,1)}*{} ="u"  !{(1,1)}*{\circ}="ur"  !{(2,1)}*{\circ}="rur"
!{(3,1)}*{\circ} ="um3"   !{(5,1)}*{\circ}="um5"  !{(6,1)}*{\circ}="um6"
!{(-2,0)}*{\circ} ="ll" !{(-1,0)}*{\circ} ="l" !{(0,0)}*{} ="m"  !{(1,0)}*{\circ}="r"  !{(2,0)}*{\circ}="rr"
!{(3,0)}*{\circ} ="bm3"   !{(5,0)}*{\circ}="bm5"  !{(6,0)}*{\circ}="bm6"
!{(-2,-1)}*{\circ} ="dll" !{(-1,-1)}*{\circ} ="dl" !{(0,-1)}*{\hdots} ="d"  !{(1,-1)}*{\circ}="dr"  !{(2,-1)}*{\circ}="drr"
!{(3,-1)}*{\circ} ="b3" !{(4,-1)}*{\hdots} ="b4"  !{(5,-1)}*{\circ}="b5"  !{(6,-1)}*{\circ}="b6"
%!{(-1,-.78)}*+{\scriptstyle 1}  !{(-.18,0)}*+{\scriptstyle 1} !{(1,.78)}*+{\scriptstyle 1}  !{(2.2,2)}*+{\scriptstyle 1}
%!{(-.15,1.15)}*+{\scriptstyle 0}  !{(.15,-1.15)}*+{\scriptstyle 0}  !{(2.15,.85)}*+{\scriptstyle 0}
!{(-2,.6)}*+{\vdots}  !{(2,.6)}*+{\vdots}  !{(6,.6)}*+{\vdots}
%%%%%%%
"dll":"dl"     "dr":"drr" 
"drr":"rr"     "rur":"urr"
"urr":"u3"  "u5":"u6"
"dr"-@{..>}"rr"    "rur"-@{..>}"u3"
}
\end{equation} 
\noindent where the bottom diagonal map goes from  $(i,0)$ to $(i+1,1).$ Again it is not hard to see that the Joyal duals for the corresponding projections are 
$\tau_{m,n}^i$ and $\pi_{m,n}^i$  and we have:

\ble\lb{lnt}
For any two maps in $\Delta$ $\tau:[p]\to [r-1],\ \pi:[q]\rightarrow [r-1]$ with linking number two there are $i, m$ and order preserving maps $\phi:[p]\to [r-m],\ \psi:[q]\to [m]$ such that $\tau = \tau_{m,r-m}^i\phi$ and $\pi = \pi_{m,r-m}^i\psi$.

Thus a $1$-commutative cosimplicial monoid is $2$-commutative if and only if it satisfies $2$-commutativity condition with respect to all maps $\tau^i_{n,m}$ and $\pi^i_{n,m}.$
\ele

 \subsection{Lattice paths action on $n$-commutative cosimplicial  monoids}

\begin{thm} \label{LtoM}

There are  morphisms of operads $p^{(n)}: \LL^{(n)}\to \MM^{(n-1)} \, n\ge 1$ %and $\MM^{(n-1)}\to \MM$
  making the following diagram commutative:

\begin{equation*}
    \xymatrix@C = +2em{
      \LL^{(1)}  \ar[d]^{p^{(1)}} \ar[r] & \LL^{(2)} \ar[d]^{p^{(2)}}  \ar[r] &\ldots   \ar[r]     & \LL^{(n)}\ar[d]^{p^{(n)}} \ar[r] &\ldots   \ar[r]   & \LL \ar[d]^{p^{(\infty)}}             \\
        \MM^{(0)} \ar[r]^{} & \MM^{(1)}    \ar[r] &\ldots   \ar[r]  &  \MM^{(n-1)} \ar[r] &\ldots \ar[r]  & \MM
 }
\end{equation*}

\end{thm}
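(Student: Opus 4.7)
The plan is to extend the construction of $p^{(1)}$ from Theorem \ref{L^1M^0} level by level. Given a shuffle lattice path $\psi \in \LL^{(n)}(n_1,\ldots,n_k;n)$, the composite $\LL^{(n)} \to \LL \xrightarrow{p} \MM$ produces an underlying commutative path $p(\psi) \in \MM(n_1,\ldots,n_k;n)$, so the only remaining datum needed for an element of $\MM^{(n-1)}$ is a permutation $\sigma(\psi) \in \Sigma_k$. I would set
$$p^{(n)}(\psi) \;=\; \bigl[\,p(\psi),\,\sigma(\psi)\,\bigr] \in \MM^{(n-1)}(n_1,\ldots,n_k;n),$$
choosing $\sigma(\psi)$ as any linear extension of the partial order on $\{1,\ldots,k\}$ induced by $\psi$: if $\cc_{ij}(\psi)=n$ the two coordinates appear in $\psi$ in a forced order and this order is imposed on $\sigma$, whereas pairs with $\cc_{ij}(\psi) < n$ are not constrained.

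The main step, and the chief obstacle, is to show that this definition is independent of the choice of linear extension. Any two admissible choices $\sigma,\sigma'$ differ by a finite sequence of transpositions of consecutive entries $i,j$ for which $\cc_{ij}(\psi) < n$. By Lemma \ref{shuflink}(3), $\cc_{ij}(\psi) = \link\bigl(\pi_{ij}(p(\psi))\bigr)$, so each such transposition occurs precisely when the projected path $\pi_{ij}(p(\psi))$ has linking number at most $n-1$. Lemma \ref{eqshuffle} then says that the two pairs $(p(\psi),\sigma)$ and $(p(\psi),\sigma')$ are $(n-1)$-equivalent in $\MM^{(0)}$, hence represent the same class in $\MM^{(n-1)}$. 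This is where the combinatorial translation between complexity of lattice paths and linking numbers of commutative paths does all the work.

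Granting well-definedness, the remaining verifications are routine. The map $p^{(n)}$ respects operadic composition because the underlying-path assignment $\psi \mapsto p(\psi)$ is operadic (it is the restriction of $p:\LL\to\MM$), and the chosen total order composes by concatenation on disjoint input sets in a way that is compatible with composition in $\Sigma$; compatibility with unit and symmetric group actions is by construction. Commutativity of the two squares in the displayed diagram
$$\xymatrix@C=+2em{
\LL^{(n)} \ar[r] \ar[d]_{p^{(n)}} & \LL^{(n+1)} \ar[d]^{p^{(n+1)}} \\
\MM^{(n-1)} \ar[r] & \MM^{(n)}
}$$
is then immediate: both composites send $\psi$ to the class of $(p(\psi),\sigma(\psi))$, where passing from $\MM^{(n-1)}$ to $\MM^{(n)}$ only enlarges the equivalence relation (it identifies more pairs), and passing from $\LL^{(n)}$ to $\LL^{(n+1)}$ only relaxes the complexity bound, without altering the underlying recipe $\psi \mapsto (p(\psi),\sigma(\psi))$. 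The rightmost square involving $\LL$ and $\MM$ is the one already established in Theorem \ref{L^1M^0}.
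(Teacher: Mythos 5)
Your construction is essentially the paper's: the paper defines $p^{(n)}$ as the composite $q^{(n-1)}\circ(p\times\varpi^{(n)})$, where $\varpi$ is the first-movement order of the lattice path, and your ``any linear extension of the order forced on pairs with $\cc_{ij}(\psi)=n$'' is just a mildly relaxed version of that choice (the first-movement order is one such extension, and your well-definedness argument via Lemma \ref{eqshuffle} correctly shows all such extensions give the same class in $\MM^{(n-1)}$). The commutativity of the squares is indeed immediate once the maps exist.

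The genuine gap is in the sentence where you declare operadicity ``routine'' because ``the chosen total order composes by concatenation\dots in a way that is compatible with composition in $\Sigma$.'' This is exactly the point that fails at the level of $\MM^{(0)}$ and constitutes the entire content of the paper's proof. The first-movement order is \emph{not} an operad map to $\Ass$ for complexity $\ge 2$: the appendix exhibits an explicit lattice path $a$ of complexity $2$ with $\varpi(a)=(12)$ whose composite with unary operations has first-movement order $(21)$. So after a substitution $\psi\circ_i\omega$, the permutation $\sigma(\psi)\circ_i\sigma(\omega)$ obtained by block composition can genuinely disagree with any linear extension of the partial order of the composite path. What rescues the construction is a nontrivial combinatorial fact: whenever the relative order of a pair of coordinates flips under substitution, the pairwise complexity $\cc_{ab}$ of that pair \emph{in the composite} drops to at most $n-1$, so the discrepancy is absorbed by the $(n-1)$-equivalence via Lemma \ref{eqshuffle}. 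Proving this requires the counting argument with sketch expansions (bounding the number of occurrences of each variable in the reduced two-variable word after substitution by $n/2+1$, resp.\ $(n+1)/2$) that occupies most of the paper's proof of this theorem. Your proposal asserts the conclusion of that lemma without proving it, and as stated the assertion that the orders compose compatibly is false; the correct statement is only that the failures of compatibility are confined to pairs of complexity $<n$, which is what must be established.
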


\begin{proof} 
We construct a map of collections $p^{(n)}$ as follows
$$ \LL^{(n)}(n_1,\ldots,n_k;m)\stackrel{p\times\varpi^{(n)}}{\longrightarrow} \MM^{(0)}(n_1,\ldots,n_k;m) \stackrel{q^{(n-1)}}{\longrightarrow}   \MM^{(n-1)}(n_1,\ldots,n_k;m).$$
The first map in this composite is not an operadic map for $n> 1$ so we will need to prove that the composite is a map of operads. 

Let $\circ_i$ be the usual operadic circle product. We have to prove then that
$(p\times\varpi^{(n)})(\psi\circ_i \omega)$ is  $(n-1)$-equivalent to  the composite   $(p\times\varpi^{(n)})(\psi)\circ_i (p\times\varpi^{(n)})( \omega)$ for any lattice paths. Due to the Lemma \ref{sharplink} point 3, Theorem \ref{sharpeq} and Lemma \ref{40} 
it will be  enough to prove this statement for two shuffle paths $\psi$ and $\omega.$ 

Without loss of generality we can also assume that $\varpi(\psi) = id$ and $\varpi(\omega) = id.$ This is because we can apply appropriate permutations to $\psi$ and $\omega.$ 
Moreover, we will assume that  $i=1.$ This is because substitution to the $i$-th variable affects the first  movement order of the variables $i,i+1, \ldots,k$ only. So, $i=1$ is the most general case.

Now, the first movement order and pairwise linking numbers of projection depend only on the sketch of the path  $\psi\circ_i \omega.$ Before we proceed let consider an example.

Let $\omega$ of complexity $2$ has a following  presentation: $t = t_1 t_2  t_3 t_2  t_1.$  It is easy to see that all pairwise complexity indices are $2$ as well. 
And $\varpi(t) = (123).$ 

 Let $\psi$ be such that $\tr_1(\psi) = (s)_1 = s_1 s_2 s_3 s_2 s_1 s_1 s_1 s_1.$  Then again $\varpi(s) = (123)$ and  any pairwise complexity is $2.$
  
   Then after substitution to the first variable we have the following sketch:
$$(s)_1\circ t  = (t_1 ) s_2 s_3  s_2  (t_2)  (t_3) (t_2) ( t_1) = s_1 s_4 s_5  s_2  s_2 s_3 s_2 s_1  \ .$$
Then $\varpi((s)_1\circ t) = (14523) \ne (12345).$ 

But the complexities of $\cc_{24}((s)_1\circ t ) =  \cc_{25}((s)_1\circ t )  =  \cc_{34}((s)_1\circ t ) = \cc_{35}((s)_1\circ t ) = 1 <2 .$ Thus in $\MM^{(0)}$ the pairs 
$(p(\psi\circ_1 \omega), (12345))$ and $(p(\psi\circ_1 \omega), (14523))$ are $1$-equivalent buy Lemma \ref{eqshuffle}. 

We will see that such behaviour holds in general. That is if the first movement order on a pair of indices changes on opposite after substitution then the corresponding complexity index also drops and so the resulting elements of $\MM^{(0)}$ are $(n-1)$-equivalent.  

Here is a general recipe how to compute the complexity index $\cc_{ij}((s)_1\circ t )$ in $\TT.$  It is not hard to see that  for $t\in FM(d)$ and $d< i < j \le d+m-1$   
 the complexity  $\cc_{ij}(s\circ_1 (t))$ is equal to $\cc_{i-d\ j-d}(s).$ And if $1\le i < j \le d$  the complexity is $\cc_{ij}(s\circ_1 (t)) = \cc_{i j}(t).$ A nontrivial case is where 
 $1\le i \le d$ and $d+1 < j \le d+m-1.$ This corresponds to the case where $s_{j-d}\ne s_1$ is one of the variables in the sketch $s$ and $t_{i}$ is a variable from $t.$ 
 
 Then we do the following: 
 \begin{enumerate}\item Out of the expansion $(s)_1$ we construct a new sketch expansion $(s')_1$  by putting all  variables except for $s_1$ and $s_{j-d}$ to be $e$ and then do reduction by variable $s_{j-d}$ only;  
 \item  in the  shuffle path $t$ we put all variables except for $t_{i}$ to be $e$ and then obtain a word $t'$ on variables  $e$ and $t_i;$
 \item  we  insert $p$-th letter from $t'$ to the $p$-th copy of  $s_1 $ in $(s')_1;$ 
 \item we renumber, do reduction and obtain  a sketch  on two variables $s_2$ and $s_{1};$
 \item  The number of letter  of this sketch minus one is exactly  $\cc_{ij}((s)_1\circ t) .$ 
 
 \end{enumerate} 
 
 Let us see how it works in the example above. And suppose we have to compute the index $\cc_{24}((s)_1\circ t)  = \cc_{24}(s_1 s_4 s_5  s_2  s_2 s_3 s_2 s_1).$
 Then we do the following    : 
 \begin{enumerate}\item $(s')_1 =  s_1 s_2 e s_2 s_1 s_1 s_1 s_1 = s_1 s_2  s_1 s_1 s_1 s_1        ;$ 
 \item  $t'=  e t_2  e t_2  e; $  

 \item $(s')_1\circ t'  = e s_2 t_2 e t_2 e = s_2  s_1  ;$ 
 \item  $\cc_{24}((s)_1\circ t) =   \cc(s_2  s_1) = 1.$ 

 \end{enumerate} 
     
In general, let a sketch expansion $(s)_1$ of complexity $n$   have the following form:
$$\underbrace{{\bold s}_1\ldots {\bold s}_1}_l {\hat s}_2 {\hat s}_3 \ldots {\hat s}_{i}\ldots {\hat s}_{p} \ldots {\bold s}_1\ldots {\bold s}_1 \ldots, $$ 
where: 
\begin{enumerate}
\item $\varpi(s)$ = id; 
 \item There is no other $s_1$ in the interval between bold letters;
\item  ${\hat s_j}$ is the first appearance of $s_j$ in $s.$ 
\end{enumerate}
%One can assume without loss of generality that the number $m$ of occurrence  of $s_1$ in $s$ is equal or greater then $n/2+1$ if $n$ is even or $(n+1)/2$ if $n$ is odd.  

Let $t\in FM(d)$ be  such that the composite $(s)_1\circ t$ is defined.  The only possibility to have an inverse order in $\varpi((s)_1\circ t)$ 
for a variable $s_j \ , j\le d$ and $s_{2+d},\ldots,s_{p+d}$ is if $t_j$ appears in $t$ in a place $l+b$ for some  $b>0.$ We want to compute $\cc_{j i+d}((s)_1\circ t).$  Then, after making 
the first two steps described  above we obtain an expansion 
$$(s')_1 = \underbrace{{\bold s}_1\ldots {\bold s}_1}_l  {\hat s}_{i}{\bold s}_1\ldots {\bold s}_1 \ldots $$ and the word 
$$t' = \underbrace{e\ldots e}_{> l}t_j\ldots .$$   
Since the complexity $\cc_{1i}(s) \le n$ the number of variables of  $s_1$ in $s'$ can not exceed $n/2 +1 $ (if $n$ is even) or  $(n+1)/2$ (if $n$ is odd) and the number of variables  $s_i$ in $s'$ is strictly less then $n/2 +1 $ (if $n$ is even) or  less or equal to $(n+1)/2$ (if $n$ is odd).

 Then after the substitution and reduction we have a sketch from $\TT(2)$ which starts from $s_2$ (which corresponds to the variable $s_{i+d}$) followed by $s_1$ (which corresponds to $s_j$) and the number of occurrences of $s_1$ variable can not exceed the number of $s_1$ variables in $s'$ minus one. So the overall number of letters in this word can not exceed $(n/2 +1-1) + (n/2 +1 -1) = n$ (if $n$ is even) and $(n+1)/2 + (n+1)/2 -1 = n$ if $n$ is odd. So the complexity of   $(s')_1\circ t'$ can not exceed $n-1$ as was claimed.    
 
Now the result follows from iterated application of Lemma \ref{eqshuffle}.

\end{proof}

\subsection{The little $n$-cubes operad action}

Let now $(\bV,\otimes, I)$ be a symmetric monoidal model category.
Recall that  a
\emph{standard system of simplices} in $(\bV,\otimes, I)$ is a cosimplicial object $\delta$ in $\bV$ satisfying the following properties \cite[Definition A.6]{BergerMoerdijk0}:
\begin{itemize}
\item[(i)]
$\delta$ is cofibrant for the Reedy model structure on
  $\bV^{\De}$,
\item[(ii)] 
$\delta^0$ is the unit object $I$ of $\bV$ and the
  simplicial operators $[m]\to[n]$ act via weak equivalences
  $\delta(m)\to\delta(n)$ in $\bV$, and
\item[(iii)]
the simplicial realization
  functor $|-|_\delta=(-)\otimes_\De\delta:\bV^{\Delta^{op}}\to
  \bV$ is a symmetric monoidal functor whose structural maps
  \[
|X|_\delta\otimes |Y|_\delta\to|X\otimes Y|_{\delta}
\]
are weak
  equivalences for Reedy-cofibrant objects $X,Y \in \bV^{\De^{op}}$.\end{itemize}  

Recall also that the totalization of a cosimplicial object $X:\De\to \bV$ with respect to $\delta$ is given by the end
$$Tot_{\delta}(X). = \int_{[n]\in \De} X(n)^{\delta(n)}\ $$  

Let also $\cal P$ be a $\bN$-coloured operad in $\bSet$ whose underlying category is $\De.$
  The operad $\coend_{\cal P}(\delta)$ is the single colour operad whose degree $n$ component is
  $$\coend_{\cal P}(\delta)(n) = Tot_{\delta}(\otimes^n_{\cal P}(\delta,\ldots,\delta)),$$   
where $\otimes^n_{\cal P}$ is the $n$-th convolution tensor product of $\cal P.$ The operad $\coend_{\cal P}(\delta)$ acts on  $\delta$-totalization $Tot_{\delta}(X)$ of a
$\cal P$-algebra $X$ \cite{BB}. 

We therefore have the following theorem:

\bth\lb{main} \begin{enumerate} \item
The totalization $Tot_{\delta}(E)$  of an $(n-1)$-commutative cosimplicial monoid $E$ in $V$  has a natural $\coend_{\MM^{(n-1)}}(\delta)$-algebra structure;
\item By restriction this totalization is also a $\coend_{\LL^{(n)}}(\delta)$-algebra;
\item If $\LL$ is strongly $\delta$-reductive then $Tot_{\delta}(E)$ is naturally an $E_{n}$-algebra.
\end{enumerate}

\eth
\bpf This follows promptly from Theorem \ref{LtoM} and Theorem 4.8 from \cite{BB}. 
\epf

\begin{cor}\label{coraction}  The totalization $Tot_{\delta}(E)$  of an $(n-1)$-commutative cosimplicial monoid $E$ has a natural  structure of $e_{n}$-algebra in any of the following cases:
\begin{enumerate}\item $\bV$ is the category of topological spaces with Quillen model structure and $\delta_{top}(k), k\ge 0$ is the geometric realisation of the representables  $\Delta(k):\bSet^{\De^{op}}\to \bSet,  \ k\ge 0$ 
\item $\bV$ is the category of chain complexes over a commutative ring and $\delta(k), k\ge 0$ is the chain complex of simplicial chains of $C_*(\Delta(k));$

\item $\bV$ is the category of chain complexes over a commutative ring and $\delta(k), k\ge 0$ is the chain complex of normalized simplicial chains $Nor_*(\Delta(k)).$

\end{enumerate}

\end{cor}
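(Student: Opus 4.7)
The plan is to reduce Corollary \ref{coraction} directly to part (3) of Theorem \ref{main} by verifying, in each of the three listed situations, that the standard system of simplices $\delta$ is one for which the lattice path operad $\LL$ is strongly $\delta$-reductive. Once this technical hypothesis is checked, Theorem \ref{main}(3) delivers the sought $E_n$-algebra (i.e.\ $e_n$-algebra) structure on $Tot_\delta(E)$ for every $(n-1)$-commutative cosimplicial monoid $E$, and no further work is needed.

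My first step would be to record that each $\delta$ appearing in (1)--(3) is a standard system of simplices in the sense of \cite{BergerMoerdijk0}: in case (1), Reedy cofibrancy of $\delta_{top}$ and the monoidal properties of geometric realization are classical; in cases (2) and (3), the normalized and unnormalized chain functors are symmetric monoidal (via the Eilenberg--Zilber/shuffle maps), and the resulting cosimplicial chain complexes are Reedy cofibrant. This verifies the ambient hypothesis needed to apply Theorem \ref{main}.

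The second and main step is to verify the strongly $\delta$-reductive property for $\LL$ in each case. This is precisely where one appeals to the Batanin--Berger machinery of the lattice path operad \cite{BB}: the construction there is designed so that $\coend_{\LL^{(n)}}(\delta)$ is weakly equivalent to the little $n$-cubes operad on the nose when $\delta = \delta_{top}$, and to a chain-level $E_n$-operad when $\delta$ is taken to be (normalized or unnormalized) simplicial chains on $\Delta(k)$. Concretely, one checks that the filtration quotients $\LL^{(n)}/\LL^{(n-1)}$ have contractible totalizations against these particular $\delta$'s; this is exactly the content of the strong $\delta$-reductivity of $\LL$ in the three cases listed, and it is established in \cite{BB}.

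With these two ingredients in hand, the proof is essentially a citation: by Theorem \ref{main}(2), $Tot_\delta(E)$ carries a natural $\coend_{\LL^{(n)}}(\delta)$-algebra structure, and by strong $\delta$-reductivity together with Theorem \ref{main}(3), this operad is weakly equivalent to the little $n$-cubes operad (resp.\ its chain version), giving the $e_n$-algebra structure claimed. The main obstacle, if any, is purely the bookkeeping of verifying strong $\delta$-reductivity for the normalized chain complex in case (3), since the Alexander--Whitney/shuffle comparison between normalized and unnormalized chains is only lax symmetric monoidal; however, this is handled by the fact that the shuffle map is a weak equivalence on Reedy-cofibrant cosimplicial objects, so case (3) reduces to case (2).
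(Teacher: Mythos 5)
Your proposal is correct and takes essentially the same route as the paper: the paper's proof likewise reduces the corollary to Theorem \ref{main}(3), citing \cite[Example 4.10]{BB} and the Appendix of \cite{BBM} for the strong $\delta$-reductivity of $\LL$ in each of the three cases. Your additional remarks (standard-system verification, reduction of case (3) to case (2)) are elaborations of what those citations already cover.
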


\begin{proof} It follows from \cite[Example 4.10]{BB} and \cite[Appendix]{BBM} that in all these cases the condition of strong reductivity of $\LL$ is satisfied. 

\end{proof}

\subsection{Steenrod $\scup_i$-products and Poisson bracket}

In this section we consider case (3) of Corrollary \ref{coraction} in more details. Without loss of generality we can assume here that our base commutative ring is $\mathbb{Z},$ so we work in the category of abelian groups. 
As it was stated $\delta(k)$ in this case consists of the integer normalised simplicial chains of $\Delta(k).$ 
For convenience let us denote the operad $\coend_{\LL^{(n)}}(\delta)$ as $\ll,$ so that $\ll_c(k)$ means the $c$-th term of the cochain complex $\ll(k)$ in the operadic degree $k.$ 
Similarly, we introduce notation $\mm$ for the operad $\coend_{\MM^{(n-1)}}(\delta).$

%We will also use the notation $Tot(E)$ for normalized totalization of a cosimplicial chain complex to simplify the notation. 
We are going to describe more explicitly the components of  the operad $\ll.$ 
 By definition this chain complex is the normalised cosimplicial totalization of the  cosimplicial chain complex   $\otimes^k_{\LL^{(n)}}(\delta,\ldots,\delta).$  Even more explicitely $\ll_c(k)$  is the intersection of kernels of codegeneracy operators  in the product 
$$\prod_{m\ge 0} \bigoplus_{m= n_1+\ldots n_k -c} \mathcal{Z}^{(n)}(n_1,\ldots,n_k;m),$$
where $ \mathcal{Z}^{(n)}(n_1,\ldots,n_k;m)$ is the quotient of the free abelian group generated by  $\LL^{(n)}(n_1,\ldots,n_k;m)$ by images of simplicial degeneracies \cite[Section 3]{BBM}.

We can  explain how the simplicial operators work in terms of labelled lattice paths. The action of a face operator $\partial_i^s , \ 0\le i \le n_s , \ 1\le s \le k$ can be seen as follows. In the hypercube $\langle n_1 +1\rangle\boxx\ldots\boxx\langle n_k+1\rangle$ take the `layer'  $$\langle n_1 +1\rangle\boxx\ldots\boxx \underbrace{\langle i,i+1\rangle}_{s}\boxx  \ldots\boxx\langle n_k+1\rangle$$
and 'collapse' it.  This will produce a hypercube $$\langle n_1 +1\rangle\boxx\ldots\boxx \langle n_s \rangle \boxx  \ldots\boxx\langle n_k+1\rangle.$$ 
If   $\langle n_1 +1\rangle\boxx\ldots\boxx\langle n_k+1\rangle$ was labelled according to the lattice path   $\psi$  then  $\langle n_1 +1\rangle\boxx\ldots\boxx \langle n_s \rangle \boxx  \ldots\boxx\langle n_k+1\rangle$ acquires a labelling which is the result of adding the labels of the vertices with the $s$-th coordinates $i+1$ to the labels of the vertices with $s$-cordinates $i.$  

Analogously, a simplicial degeneracy works by `expantion' of the corresponding layer and labelling $0$ all new points (see \cite[Section 2.4]{BBM} for examples). 

Cosimplicial coface and codegeneracy operators do not change the hypercube and the path.  A coface just adds $1$ to the corresponding label whereas a codegeneracy sucbstracts $1$ (see \cite[Section 2.5]{BBM}).

It is not hard to see from this description of simplicial operators that an %individual 
lattice path is  degenerate if it has a stop labelled $0$ at some internal point (but may have nonzero stops in the corners. %So we can choose as basis of normalised realisation the set of lattice paths which have no $0$ in the internal points. 

We then  take a normalised cosimplicial totalization  of the corresponding cosimplicial abelian group. The differential is the usual sum of alternating coface operators and  we also take intersection of kernels of all codegeneracies. A lattice path is in this intersection if all stops at internal points  are labelled $1.$ Since we want it to be a degenerate path it is necessary and sufficient for this label to be $1$ at internal point.  But such a path also has to have $0$ labels at any corner since if it is greater than $0$ the corresponding codegeneracy can never produce a degenerate path after subtracting $1$ from this label. We will call such lattice path {\it normal}. Let $\nlp^{(n)}_m(n_1,\ldots,n_k)\subset \ll_m(k)$ be  the set of normal lattice paths of complexity  equal exactly  $n.$
 
\begin{defin} A lattice path 
$\psi: \langle m+1\rangle \stackrel{}{\to} \langle p+1\rangle\boxx\langle q+1\rangle$
is called smooth if its image $p(\psi)$ is a smooth path in $\MM.$ 
\end{defin} 
Let $\slp^{(n)}_m(p,q)$ be the set of all smooth lattice paths of complexity exactly $n.$
\begin{lem} \label{smooth-normal}
The following is true:
\begin{enumerate} 
\item Any smooth path $\phi$ such that $\link(\phi) = n$ has at least one normal lifting of complexity $n;$
\item If  $\psi$ is a lifting of $\phi$ with $\link(\phi) = n$ and $\cc(\psi)>n$ then $\psi$ is not normal. 
\item  The set $\nlp^{(n)}_m(p,q)$ is nonempty only if  $m=p+q-n+1.$ 
\item The set $\slp^{(n)}_m(p,q)$ consists of all normal liftings of smooth paths in $\MM$ with linking numbers  exactly $n.$  
\end{enumerate} 
\end{lem}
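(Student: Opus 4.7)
My plan is to treat the four parts together, since they rest on a single combinatorial picture: a normal lifting of a smooth path $\phi$ with $\link(\phi)=n$ should package each element of $supp(\phi_1)\cap supp(\phi_2)$---equivalently, each diagonal step in its Delannoy factorisation---together with its adjacent monotone runs into exactly one generator of $\langle n\rangle$, carrying the resulting L-shape strictly inside that generator's image.

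For part (1), I start from Lemma~\ref{overlap}, which gives $|supp(\phi_1)\cap supp(\phi_2)|=n$, and from Lemma~\ref{smoothfactorisation}, which lets me reduce to the case where $\phi$ is a smooth Delannoy path whose $n$ diagonals are exactly these overlaps. I would translate the minimal shuffling of length $n$ into a lattice path via the correspondence between shufflings and liftings, then compress the domain so that each L-shape is absorbed together with the adjacent monotone arrows into a single generator of $\langle n\rangle$. In the resulting $\psi$ the $n$ zigzag-corner points lie strictly inside the composites $\psi(\bar i)$---so they are not visited by any object of the domain and carry label $0$---while the $n+1$ generator-vertices sit on Delannoy image points and carry label $1$; this gives a normal lifting of complexity exactly $n$. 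For part (2) I argue by contradiction: if $\cc(\psi)>n$, then since $\phi$ has only $n$ diagonals at least one elbow of the image path must coincide with a generator vertex $\psi(j)$ rather than lie strictly inside a composite $\psi(\bar i)$. That vertex is then simultaneously visited (so carries label $\ge 1$) and an image-path corner (which, in the cochain complex, must carry label $0$), preventing $\psi$ from lying in the intersection of codegeneracy kernels. Part (3) then follows: normality forces one generator per diagonal, hence domain $\langle n\rangle$, domain parameter $n-1$, and cochain degree $p+q-(n-1)=p+q-n+1$. Part (4) combines parts (1)--(3) with Lemma~\ref{shuflink}(3): the forward inclusion is immediate from the construction, and for the reverse, a smooth lattice path of complexity $n$ projects to a smooth path of linking number $n$ and the minimality of its complexity forces the labelling discipline of normality.

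The main obstacle I anticipate is in part (1), when $\phi$ has two or more consecutive diagonal steps with empty monotone runs between them: the corresponding L-shapes must still be distributed into distinct generators of $\langle n\rangle$, with the L-orientations globally consistent with an interleaving shuffling. Resolving this will require a careful local case analysis on the directions of the flanking runs at each diagonal, which simultaneously determines the L-orientation and the choice of dividing generator-vertex among the nearby Delannoy image points.
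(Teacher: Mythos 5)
Your proposal is correct in substance and rests on the same combinatorial dichotomy that drives the paper's (much terser) proof: in a normal lifting every point of the unit staircase is either a $0$-labelled corner or a $1$-labelled stop. The routes differ in detail. The paper dismisses part (1) as immediate --- and indeed the lifting attached to a \emph{minimal} shuffling of a smooth (hence Delannoy, hence injective and surjective) path already has its $n$ corners strictly inside the images of the diagonal generators (so labelled $0$) and all vertices labelled $1$, so your extra ``compress the domain'' step is unnecessary and your worry about consecutive diagonals dissolves, since the shuffling itself dictates the L-orientations. For part (3) the paper passes through the bijection between normal paths and shuffle paths (add/subtract $1$ at the corners) and the classical fact that $(p,q)$-shuffle paths exist only in degree $p+q+1$; your direct count of Delannoy steps ($p+q+2-n$ steps, hence $m+2=p+q+3-n$ stops) is an equivalent computation, although the intermediate phrase ``domain $\langle n\rangle$'' is a slip. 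Your arguments for parts (2) and (4) --- any corner beyond the $n$ hidden inside diagonal steps must sit at a visited vertex, which then carries label $\ge 1$ and breaks normality; then $\link(\phi)\le\cc(\psi)$ together with part (2) pins $\cc(\psi)=n$ --- are exactly what the paper leaves implicit, and they are sound. One caution: part (3) concerns \emph{all} normal lattice paths of complexity $n$, not only liftings of smooth paths (the paper's example after Theorem \ref{bracketE} shows a normal path whose image is not smooth), so the ``one generator per diagonal'' count must be replaced either by the general corner/stop count or by the paper's shuffle-path bijection; both give the same formula, so this is a matter of scope rather than a genuine gap.
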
 
\begin{proof} First part of the Lemma is immediate from the definitions.  

Any normal path can be completed to  a shuffle path by adding $1$ to all labels $0.$ The complexity of these two paths remains the same.  For a fixed $m\ge 0$ the set of shuffle paths of all complexities $\langle k+1\rangle \stackrel{}{\to} \langle p+1\rangle\boxx\langle q+1\rangle$ is in one to one correspondence with the  set of all $(p,q)$-shuffles, hence, it is nonempty if and only if $k=p+q+1.$   Complexity $n$ paths have exactly $n$-corners. To get back a normal path we have to substruct $1$ from the labels in the corners. Hence, for the  set of normal lattice paths $\nlp^{(n)}_m(p,q)$ to be nonempty it is necessary that   $m = p+q +1 -n.$      \end{proof}

\begin{remark} The condition $m = p+q -n +1$ does not guaranty that there exists a normal or smooth path with this $m$ thus the implication (3) can not be inverted.   
\end{remark}
\begin{remark} Due to the points (3) and (4) of the Lemma it is tempting to skip the subscript $m$ in the notations for $\nlp^{(n)}_m(p,q)$ and $\slp^{(n)}_m(p,q).$ 
But we leave these notations like they are because of convenience to remember the number $m$ in some formulas involving summations over different $p$ and $q.$ 
\end{remark}

%This lemma suggests that one can drop $m$ in the notation $\slp^{(n)}(p,q) :=   \slp^{(n)}_m(p,q).$ 
Call a lattice path from  $\nlp^{(n)}(p,q) $ {\it even}     if its first movement order is $(12)$ and odd otherwise. Let $\nlp_{+m}^{(n)}(p,q)$ be the set of even paths and  $\nlp_{-m}^{(n)}(p,q)$ be the set of odd paths. Analogously,  one can give a similar definition for smooth lattice paths. Let $\slp_{+m}^{(n)}(p,q)$ be the set of even smooth lattice paths and  $\slp_{-m}^{(n)}(p,q)$ be the set of odd smooth lattice paths.
%Hence, to understand the action of $\ll^n$ it suffices to exhibit this action on the lattice paths with smooth images in $\MM.$  
%Smooth lattice paths will be used to get explicit formulas for $\cup$-product and bracket  operations on the normalised total complex of an $n$-commutative monoid. 

\begin{defin} The {\em sign} $\sgn(\psi)$ of a lattice path $\psi$ is the sign of  the shuffle permutation $\mu(\psi^\dag)$  determined by the shuffle path ${\psi}^\dag$ (see Lemma \ref{classical}).
\end{defin}

\begin{lem}\label{even=odd} For a  lattice path $\psi: \langle m+1\rangle \to \langle p+1\rangle\boxx \langle q+1\rangle $  the following is true :$${\sgn}(\psi) = (-1)^{(p-1)(q-1)}{\sgn}(\psi^{t})$$
where $\psi^{\mathrm{t}}$ is obtained from $\psi$ by the action of involution in $\LL$, i.e.
$$\langle m+1\rangle \stackrel{\psi}{\to} \langle p+1\rangle\boxx \langle q+1\rangle \stackrel{\tau}{\to} \langle q+1\rangle\boxx \langle p+1\rangle .$$  
 \end{lem}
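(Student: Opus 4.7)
The plan is to reduce the statement about arbitrary lattice paths to the corresponding statement about their associated shuffle paths, and then to carry out a direct sign computation for the block swap of a shuffle permutation. Since by definition $\sgn(\psi)$ depends only on $\psi^\dag$ via $\mu(\psi^\dag)$, the first step is to check that the association $\psi\mapsto\psi^\dag$ is compatible with the involution $\tau$, i.e.\ that $(\psi^t)^\dag = (\psi^\dag)^t$. This is essentially tautological: applying $\tau$ to the codomain $\langle p+1\rangle\boxx\langle q+1\rangle$ only permutes the two coordinate projections, and the procedure that promotes a lattice path to its underlying shuffle path (by $0\mapsto 1$ relabelling of internal stops, cf.\ the discussion preceding Lemma \ref{smooth-normal}) commutes with swapping the two factors.

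Next, I would identify the shuffle permutation attached to a shuffle path $\psi^\dag\colon\langle m+1\rangle\to\langle p+1\rangle\boxx\langle q+1\rangle$. Such a path has exactly $(p+1)+(q+1)=p+q+2$ non-identity generating steps, each of which is either a ``horizontal'' move in the first factor or a ``vertical'' move in the second; the interleaving pattern gives a $(p+1,q+1)$-shuffle $\mu(\psi^\dag)\in\Sigma_{p+q+2}$. Applying the involution $\tau$ turns every horizontal move into a vertical one and vice versa, so $\mu((\psi^\dag)^t)=\beta_{p+1,q+1}\cdot\mu(\psi^\dag)$, where $\beta_{p+1,q+1}\in\Sigma_{p+q+2}$ is the block transposition that sends $1,\dots,p+1$ to $q+2,\dots,p+q+2$ and $p+2,\dots,p+q+2$ to $1,\dots,q+1$.

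The final step is a direct sign computation: $\beta_{p+1,q+1}$ is a product of $(p+1)(q+1)$ elementary transpositions (one per crossing pair), hence
\[
\sgn(\beta_{p+1,q+1})=(-1)^{(p+1)(q+1)}.
\]
Since $(p+1)(q+1)-(p-1)(q-1)=2p+2q$ is even, $(-1)^{(p+1)(q+1)}=(-1)^{(p-1)(q-1)}$, and multiplicativity of the sign homomorphism yields
\[
\sgn(\psi^t)=\sgn(\mu((\psi^\dag)^t))=\sgn(\beta_{p+1,q+1})\sgn(\mu(\psi^\dag))=(-1)^{(p-1)(q-1)}\sgn(\psi).
\]

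I expect the only potentially delicate point to be the compatibility $(\psi^t)^\dag=(\psi^\dag)^t$; everything else is the classical block-swap computation. If the ``dag'' construction involves a choice (e.g.\ when $\psi$ has multiple extensions to a shuffle path), one may need to verify that the parity of $\mu(\psi^\dag)$ is well defined, but the sign is invariant under the $0\mapsto 1$ relabelling because that step does not alter the order in which horizontal and vertical generators appear along the path.
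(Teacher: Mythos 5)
Your proof is correct, but it takes a genuinely different (and cleaner) route than the paper. The paper reduces to the case of an odd shuffle path, records the corner coordinates $j_0<\dots<j_{l+1}$ and $i_0<\dots<i_{l+1}$, writes explicit inversion counts $inv(\psi)=\sum_s(i_{s+1}-i_s)((p+1)-j_s)$ and $inv(\psi^t)=\sum_s(j_{s+1}-j_s)((q+1)-j_{s+1})$, and sums them modulo $2$ to get $(p+1)(q+1)$. You instead observe that, by the explicit formula of Lemma \ref{classical}, the swap $\tau$ acts on the shuffle permutation by left multiplication with the fixed block transposition $\beta_{p+1,q+1}$, so that $\mu((\psi^\dag)^t)=\beta_{p+1,q+1}\cdot\mu(\psi^\dag)$, and then you invoke multiplicativity of the sign together with $\sgn(\beta_{p+1,q+1})=(-1)^{(p+1)(q+1)}=(-1)^{(p-1)(q-1)}$. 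This avoids the case analysis (even versus odd path) and the corner bookkeeping entirely, and in particular sidesteps the somewhat error\-/prone inversion-sum manipulation in the paper's version. What the paper's computation buys is an explicit inversion count for each path separately, which is reused implicitly in the sign verifications of Theorem \ref{formulal}; your argument only gives the relative sign, which is all the lemma asserts.

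One small point of alignment with the paper: the sign $\sgn(\psi)$ is defined via the canonical factorization $\psi=\psi^\dag\circ\alpha$ through a shuffle path (the construction in the appendix), not via the ``add $1$ to all labels $0$'' completion you cite from the discussion before Lemma \ref{smooth-normal}. The compatibility $(\psi^t)^\dag=(\psi^\dag)^t$ that you flag as the delicate step is immediate for the former construction: $\tau$ is a post-composition, so $\tau\circ\psi=(\tau\circ\psi^\dag)\circ\alpha$ is a factorization through the shuffle path $\tau\circ\psi^\dag$, and uniqueness of the factorization gives $(\psi^t)^\dag=\tau\circ\psi^\dag$. So no additional well-definedness check is needed.
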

 \begin{proof}
 We can assume from the beginning that $\psi$ is a shuffle path. We can also assume that $\psi$ is an odd path. 
 Let $0=j_0 < j_1 <\ldots < j_l <j_{l+1} = p+1 $ be the first coordinates of the points where the change of directions takes place. Let also  $0=i_0 < i_1 <\ldots < i_l <i_{l+1} = q+1 $ be the second coordinates of the points where the change of directions happens.
$$\xygraph{
!{<0cm,0cm>;<1cm,0cm>:<0cm,.8cm>::}
!{(-1,5)}*{\circ}  !{(0,5)}*{...}   !{(1,5)}*{\circ}  !{(2,5)}*{...} !{(3,5)}*{\circ}   !{(4,5)}*{...}  !{(5,5)}*{\bullet}
!{(-1,4.1)}*{\vdots}  !{(1,4.1)}*{\vdots}  !{(3,4.1)}*+{\vdots}="10"     !{(5,4.1)}*{\vdots}
!{(-1,3)}*{\circ}  !{(0,3)}*{...}   !{(1,3)}*{\bullet}="7"  !{(2,3)}*+{...}="8" !{(3,3)}*{\bullet}="9"   !{(4,3)}*{...}  !{(5,3)}*{\circ}
!{(-1,2.1)}*{\vdots}  !{(1,2.1)}*+{\vdots}="6" !{(1,2.1)}*{\vdots}="6a"  !{(3,2.1)}*{\vdots} !{(5,2.1)}*{\vdots}
!{(-1,1)}*{\bullet}="3"  !{(0,1)}*+{...}="4"  !{(1,1)}*{\bullet}="5"  !{(2,1)}*{...} !{(3,1)}*{\circ}  !{(4,1)}*{...} !{(5,1)}*{\circ}
!{(-1,0.1)}*+{\vdots}="2"  !{(-1,0.1)}*{\vdots}="2a"  !{(1,0.1)}*{\vdots}   !{(3,0.1)}*{\vdots}  !{(5,0.1)}*{\vdots}
!{(-1,-1)}*{\bullet} ="1" !{(0,-1)}*{...}   !{(1,-1)}*{\circ}  !{(2,-1)}*{...} !{(3,-1)}*{\circ}   !{(4,-1)}*{...}  !{(5,-1)}*{\circ}
%%%%%%%
!{(1,-1.3)}*{\scriptstyle j_1}  !{(3,-1.3)}*{\scriptstyle j_2}
!{(-1.3,1)}*{\scriptstyle i_1}  !{(-1.3,3)}*{\scriptstyle i_2}
%%%%%%%
"1":"2"  "2a":"3"  "3":"4"  "4":"5" "5":"6"  "6a":"7"  "7":"8"  "8":"9"  "9":"10"  
}$$
We then have the following  formula for the number of inversions in $\mu(\psi):$ 
$$ inv(\psi) = \sum_{s=0}^{l}(i_{s+1}-i_s)((p+1)-j_s)$$
Analogously we have
$$ inv(\psi^{\mathrm{t}}) = \sum_{s=0}^{l}(j_{s+1}-j_s)((q+1)-j_{s+1}).$$ 
The sum of this numbers modulo $2$ is:
$$\sum_{s=0}^l (p+1)(i_{s+1} +i_s) + \sum_{s=0}^l (q+1)(j_{s+1} +j_s) + \sum_{s=0}^l i_sj_s + j_{s+1}j_{s+1} = (p+1)(q+1).$$ 
The results follows. 
\end{proof} 

Let now $A$ be an algebra of $\LL^{(n)}.$  
\begin{defin} Let $0\le i \le n-1.$ The Steenrod product $x\scup_{i} y\in A(p+q-i) \ 0\le i\le n-1 , x\in A(p),y\in A(q)$ in the normalised totalisation of $A$ 
is given by the formula
$$x\scup_{i} y =  \sum_{\psi\in \nlp_{+m}^{(i+1)}(p,q)} (-1)^{(p-1)(q-1)}{\sgn}(\psi)\psi(x\otimes y), $$
where $\psi(x\otimes y)$ is the value of $\psi$ on $x\otimes y.$  
\end{defin}
Consider also the following operation of degree $(n-1)$ on $Tot(A):$
$$ \gamma^{(n-1)}(x\otimes y) = x\scup_{n-1} y - (-1)^n (-1)^{(p-1)(q-1)} y\scup_{n-1} x .$$
\begin{lem} The operation $\gamma^{(n-1)}$ is equal to the operation given by the following  formula:
$$\gamma^{(n-1)}(x\otimes y) =   \sum_{\psi\in \nlp_{+m}^{(n)}(p,q)} (-1)^{(p-1)(q-1)}{\sgn}(\psi)\psi (x\otimes y) + $$ $$ +\sum_{\psi\in \nlp_{-m}^{(n)}(p,q)}  (-1)^{(n-1)}{\sgn}(\psi)\psi(x\otimes y).$$ 
\end{lem}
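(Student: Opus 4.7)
The plan is to prove the identity by direct substitution, using the involution $\psi\mapsto\psi^t$ and Lemma \ref{even=odd} to rewrite $y\scup_{n-1}x$ as a sum over the odd normal lattice paths $\nlp^{(n)}_{-m}(p,q)$. The even-path half of the target formula is immediate from the definition of $\scup_{n-1}$: taking $i=n-1$ (so that $i+1=n$) we get
\[
x\scup_{n-1}y \;=\; \sum_{\psi\in\nlp^{(n)}_{+m}(p,q)}(-1)^{(p-1)(q-1)}\sgn(\psi)\,\psi(x\otimes y),
\]
which is exactly the first sum in the claimed expression for $\gamma^{(n-1)}$. So only the second summand, $-(-1)^n(-1)^{(p-1)(q-1)}\,y\scup_{n-1}x$, has to be identified with the sum over $\nlp^{(n)}_{-m}(p,q)$.

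For this, I would first unfold the definition
\[
y\scup_{n-1}x \;=\; \sum_{\phi\in\nlp^{(n)}_{+m}(q,p)}(-1)^{(q-1)(p-1)}\sgn(\phi)\,\phi(y\otimes x),
\]
and then reindex via the involution $\phi\mapsto\phi^t=\tau\circ\phi$. Swapping the two coordinates turns a path whose first movement is in coordinate $1$ into one whose first movement is in coordinate $2$, so $\phi\mapsto\phi^t$ is a bijection $\nlp^{(n)}_{+m}(q,p)\longleftrightarrow\nlp^{(n)}_{-m}(p,q)$ (it preserves complexity and normality since both depend only on the unordered path data). Setting $\psi=\phi^t$, the symmetric $\Sigma_2$-action in the operad yields $\phi(y\otimes x) = (-1)^{pq}\psi(x\otimes y)$ by the Koszul sign rule, and Lemma \ref{even=odd} gives $\sgn(\phi)=(-1)^{(p-1)(q-1)}\sgn(\psi)$.

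Substituting everything back produces
\[
y\scup_{n-1}x \;=\; \sum_{\psi\in\nlp^{(n)}_{-m}(p,q)}(-1)^{(p-1)(q-1)}\cdot(-1)^{(p-1)(q-1)}\cdot(-1)^{pq}\,\sgn(\psi)\,\psi(x\otimes y),
\]
and multiplying by the prefactor $-(-1)^n(-1)^{(p-1)(q-1)}$ reduces, modulo $2$, to the exponent
$n + pq + (p-1)(q-1) + 1 \equiv n+1 \pmod 2$, which is precisely $(-1)^{n-1}$. This matches the coefficient on the odd-path sum in the statement of the lemma, completing the identification.

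The main obstacle is the careful bookkeeping of three interacting sign sources: the Koszul sign $(-1)^{pq}$ from exchanging the graded inputs $x$ and $y$, the topological sign $(-1)^{(p-1)(q-1)}$ from Lemma \ref{even=odd} controlling $\sgn(\psi)$ versus $\sgn(\psi^t)$, and the prescribed prefactor $(-1)^n(-1)^{(p-1)(q-1)}$ in $\gamma^{(n-1)}$. The miracle of the formula is that all the $(p,q)$-dependence cancels and only the purely dimensional sign $(-1)^{n-1}$ survives; verifying this cancellation is essentially the whole content of the proof.
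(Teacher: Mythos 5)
Your overall strategy is exactly the one the paper intends (its proof is the single sentence ``This follows promptly from Lemma \ref{even=odd}''): the even half is the definition of $x\scup_{n-1}y$, and the odd half comes from reindexing $y\scup_{n-1}x$ along the involution $\phi\mapsto\phi^{t}$, which does give a bijection $\nlp^{(n)}_{+m}(q,p)\leftrightarrow\nlp^{(n)}_{-m}(p,q)$ preserving normality and complexity, together with Lemma \ref{even=odd}. So the architecture is right.

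The gap is in the final sign count, which you yourself flag as ``the whole content of the proof,'' and it does not close as written. Modulo $2$ one has $pq+(p-1)(q-1)=2pq-p-q+1\equiv p+q+1$, so your exponent $n+pq+(p-1)(q-1)+1$ is congruent to $n+p+q$, not to $n+1$; these agree only when $p+q$ is odd, e.g.\ they disagree already for $p=q$. With the three signs exactly as you have listed them ($(-1)^{(p-1)(q-1)}$ from the definition of $y\scup_{n-1}x$, $(-1)^{(p-1)(q-1)}$ from Lemma \ref{even=odd}, and a Koszul sign $(-1)^{pq}$ from the swap) the odd-path coefficient comes out as $(-1)^{n+p+q}\sgn(\psi)$ rather than the asserted $(-1)^{n-1}\sgn(\psi)$. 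The cancellation the lemma requires occurs when the sign incurred in passing from $\phi(y\otimes x)$ to $\psi(x\otimes y)$ is $(-1)^{(p-1)(q-1)}$ — i.e.\ the Koszul sign for the shifted degrees $p-1,q-1$, which is the same grading already built into the symmetrization factor $(-1)^{n}(-1)^{(p-1)(q-1)}$ in the definition of $\gamma^{(n-1)}$. Then one has four factors of $(-1)^{(p-1)(q-1)}$ (definition of $\scup_{n-1}$, Lemma \ref{even=odd}, the swap, and the prefactor), which cancel in pairs and leave precisely $-(-1)^{n}=(-1)^{n-1}$. So either you must justify that the $\Sigma_{2}$-equivariance of the $\coend$-operad action on the normalised totalisation produces $(-1)^{(p-1)(q-1)}$ rather than $(-1)^{pq}$, or, if you insist on $(-1)^{pq}$, the computation as performed contradicts the statement. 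As it stands the arithmetic step is simply false, and the proof does not go through without correcting it.
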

\begin{proof}This follows promptly from Lemma \ref{even=odd}. 
\end{proof}

\bth \label{formulal} For $n \ge 2$  the  degree $1-n$ map $$\gamma^{(n-1)}:A(p)\otimes A(q)\to A(p+q-n+1)$$ 
induces  a  Poisson bracket  operation on $H^*(A).$  
\eth
\begin{proof}
The proof of this formula depends  on the combinatorics of $\LL^{(n)}.$ We will use the following fact about it proved in \cite[Theorem 3.10]{BBM}.
Let $\BB^{(n)}(k)$ be the normalised simplicial normalisation of the $k$-simplicial abelian group $\LL^{(n)}(\bullet_1,\ldots,\bullet_k,0)$ with the induced differential. One can define an operadic composition on this collection. This operad is called {\it the normalized brace operad of complexity $n$}. There is an operad map (whiskering)
$w:\BB^{n} \to \ll.$ Theorem 3.10 from \cite{BBM} states that this map is a quasi-isomorphism of operads. Moreover, for each $k\ge 0$ there is a quasiisomorphism of chain complexes
$$\BB^{(n)}(k)\stackrel{w}{\longrightarrow} \ll(k) \stackrel{i}{\longrightarrow}|\dot{\LL}^{(n)}|(k),$$
where $|\dot{\LL}^{(n)}|(k)$ is the un-normalized cosimpliocial realisation of the cosimplicial chain complex  $\otimes^k_{\LL^{(n)}}(\delta,\ldots,\delta).$
This composite admits a retraction $proj: |\dot{\LL}^{(n)}|(k)\to \BB^{(n)}(k),$ which is also a quasiisomorphism. In fact, the proof of this Theorem in \cite{BBM} consists exactly in proving that $proj$ is a quasiisomorphism. For this $|\dot{\LL}^{(n)}|(k)$ is expressed as a total complex of a double chain complex and $\BB^{(n)}(k)$ as its left column. The retraction is the projection on the left column of this double chain complex. 

Explicitly the differentials in this double chain complex look as follows. Given a lattice path of complexity $n$ $$\psi:\langle m+1\rangle\to \langle p+1\rangle \boxx \langle q+1\rangle$$
the horizontal differential  (which comes from the cosimplicial differential)  is equal to
$$(-1)^{n-1}\sum_{i=0}^{m+1}(-1)^i\delta_i$$
where $\delta_{i}(\psi)$ is the same path as $\psi$ except that the label $x$ at the point $\psi(i)$ becomes $x+1.$ 
\nl
There are also two  differentials $\partial^1$ and $\partial^2.$ 
$$\partial^1(\psi) = (-1)^{n+q-1}\sum_{i=0}^{p+1} (-1)^i\partial^1_i(\psi) \  \ \mbox{and} \  \
\partial^2(\psi) = \sum_{i=0}^{q+1} (-1)^i\partial^2_i(\psi),$$
where $\partial^2_i$ collapses the $i$-th row and $\partial^1_i$ collapses the $i$-th column.  
\nl
The vertical differential in the double chain complex is the sum $\delta + \partial,$ where $\partial = \partial^1 + \partial^2.$

To prove our Theorem it suffice to show that there is a cocycle in $\ll(2),$ mapped by $proj$ to a generator of the cohomology group $H^{n-1}(\BB^{(n)}(2)) = \mathbb{Z}.$ 
\nl
Consider the following linear combination of lattice paths for each $m\ge 0:$
\begin{equation}\label{formula2}  \lambda^{(n-1)}_m 
=  \sum_{p+q = m+n-1}( \sum_{\psi\in \nlp_{+m}^{(n)}(p,q)} (-1)^{(p-1)(q-1)}{\sgn}(\psi)(\psi)+ \sum_{\psi\in \nlp_{-m}^{(n)}(p,q)}  (-1)^{(n-1)}{\sgn}(\psi))(\psi)).     \end{equation} 
This gives an element $\lambda^{(n-1)}$ of the total complex of the double complex. 
The fact that $proj(\lambda)$ is a generator of $H^{n-1}(\BB^{(n)}(2))$ follows immediately as this projection is exactly the chain in $\BB^{(n)}(2)$ of degree $n-1$ corresponding to the generator in the cohomology group of the chains complex of the standard globular subdivision of the sphere $S^{n-1}$ (see \cite{ms}).    
\nl
Now we have to prove that $\lambda^{(n-1)}$ is a cocycle. 
\nl
Let $\psi\in \nlp_{+m}^{(n)}(p,q)$ be a normal path. Recall that the horizontal cosimplicial differential $\delta_s(\psi)$ adds $1$ to the corresponding label. Thus it creates a point labelled by $2.$ 
We have two possibilities for the location of this points. It either belongs to a horizontal piece of the path as in picture below or to a vertical piece. 
\beq\lb{dsp}
\xygraph{
!{<0cm,0cm>;<1cm,0cm>:<0cm,.8cm>::}
!{(-1,5)}*{\circ}  !{(0.5,5)}*{...}    !{(2,5)}*{\circ}    !{(3.5,5)}*{...}  !{(5,5)}*{\bullet}
!{(-1,4.1)}*{\vdots}  !{(2,4.1)}*{\vdots}     !{(5,4.1)}*{\vdots}
!{(-1,3)}*{\circ}  !{(0,3)}*{...}   !{(1.3,3)}*{}="1"  !{(2,3)}*{\bullet}="2" !{(2.7,3)}*{}="3"   !{(4,3)}*{...}  !{(5,3)}*{\circ}
!{(-1,1.5)}*{\vdots}  !{(2,1.5)}*+{\vdots} !{(5,1.5)}*{\vdots}
!{(-1,0)}*{\bullet} !{(0.5,0)}*{...}   !{(2,0)}*{\circ}     !{(3.5,0)}*{...}  !{(5,0)}*{\circ}
%%%%%%%
!{(2,-.3)}*{\scriptstyle i}  !{(5,-.3)}*{\scriptstyle p+1}  
!{(-1.3,3)}*{\scriptstyle j}  !{(-1.4,5)}*{\scriptstyle q+1}
!{(2,2.7)}*{\scriptstyle 2}
%%%%%%%
"1":"2"  "2":"3"   
}
\eeq
Assume the first (the argument in the second case is completely analogous). 
\nl
Let the coordinates of the point be $(i,j).$  It divides the path $\psi$ in to two parts. The first part is starting at $(0,0)$ and ending at $(i,j)$ and the second is going from $(i,j)$ to $(p+1,q+1).$ Both parts are normal paths. Suppose also that the complexity of the first paths is  $n_1$ and the second has complexity $n_2, \ n_1+n_2=n.$ 
By Lemma \ref{smooth-normal} applied to the first path we have that the number of non-zero stops along this path is $m_1 = (i-1)+(j-1) -n_1 +1.$ 
\nl
Hence, the $s= m_1+1 = i+j - n_1.$ And $\delta^s(\psi)$ enter the alternating sum with the coefficient $(-1)^{n-1}(-1)^{s} = (-1)^{n-n_1 + i + j -1} = (-1)^{n+ i + j -1}.$ The last equality takes place since $n_1$ is an even number. This in turn is because the path is even and approaches $(i,j)$ from the left. 
We thus described the effect of horizontal differential to $\psi.$ Hence, the overall sign of this path in the formula (\ref{formula2}) is $(-1)^{(p-1)(q-1) + n +i +j  -1}\sgn(\psi).$ 
\nl
The vertical differential applied to a path from  $\psi\in \nlp_{+m}^{(n)}(p+1,q)$ can also create a path as in \eqref{dsp} above and there is exactly one such normal path $\psi_i$. Locally, around the point $(i,j)$, it has the form     
$$\xygraph{
!{<0cm,0cm>;<1cm,0cm>:<0cm,.8cm>::}
!{(-1,5)}*{\circ}  !{(0.5,5)}*{...}    !{(2,5)}*{\circ}  !{(3,5)}*{\circ}    !{(4.5,5)}*{...}  !{(6,5)}*{\bullet}
!{(-1,4.1)}*{\vdots}  !{(2,4.1)}*{\vdots}  !{(3,4.1)}*{\vdots}     !{(6,4.1)}*{\vdots}
!{(-1,3)}*{\circ}  !{(0,3)}*{...}   !{(1.3,3)}*{}="1"  !{(2,3)}*{\bullet}="2" !{(3,3)}*{\bullet}="3" !{(3.7,3)}*{}="4"   !{(5,3)}*{...}  !{(6,3)}*{\circ}
!{(-1,1.5)}*{\vdots}  !{(2,1.5)}*+{\vdots}  !{(3,1.5)}*+{\vdots}  !{(6,1.5)}*{\vdots}
!{(-1,0)}*{\bullet} !{(0.5,0)}*{...}   !{(2,0)}*{\circ}   !{(3,0)}*{\circ}   !{(4.5,0)}*{...}  !{(6,0)}*{\circ}
%%%%%%%
!{(2,-.3)}*{\scriptstyle i} !{(3,-.3)}*{\scriptstyle i+1}  !{(6,-.3)}*{\scriptstyle p+2}  
!{(-1.3,3)}*{\scriptstyle j}  !{(-1.4,5)}*{\scriptstyle q+1}
%%%%%%%
"1":"2"  "2":"3" "3":"4"    
}$$
The differential $\partial^1_i$ sends this path to the path $\delta_s(\psi).$ We need to show that $\partial^1_i$ enters the alternating sum with the sign opposite to the one of $\delta_s(\psi).$ This sign is easy to compute. It is $(-1)^{p(q-1)}(-1)^{n+q-1}(-1)^i(-1)^{j}\sgn(\psi).$ This is because $\sgn(\psi_i) = (-1)^j\sgn(\psi)$ since the shuffle $\mu(\psi_i^\dag)$ contains extra $j$ inversions comparing to $\mu(\psi^\dag).$  Then the overall sign is $(-1)^{p(q-1)+n+q +i +j -1}\sgn(\psi)$ which is obviously opposite to  $(-1)^{(p-1)(q-1) + n +i +j  -1}\sgn(\psi).$   
\nl
We need to check that for odd paths a similar property holds. But this is reduced to the even paths again due to Lemma \ref{even=odd}. 

Finally the vertical differential in the bicomplex has components which reduce the number of corners in a lattice path or create corners with the label $1.$ It is not too hard (but rather long) to check that those components cancel each other (it is quite obvious modulo $2$ but requires some signs verification in the spirit above in general).  
\end{proof} 

Let now $E$ be an $(n-1)$-commutative cosimplicial monoid. 

\bth \label{bracketE} The Steenrod product $\scup_i$ on the normalised totalisation of $E$ is given by the formula:
$$a\scup_{i} b =  \sum_{\psi\in \slp_{+m}^{(i+1)}(p,q)} (-1)^{(p-1)(q-1)}{\sgn(\psi)}p^{(n)}(\psi) (a\otimes b),  \ 0\le i\le n-1 , a\in E(p),b\in E(q), $$
where $p^{(n)}(\psi) (a\otimes b)$ is the action of the operation $p^{(n)}(\psi)$ on $a\otimes b$ (see (\ref{action})). 
%For $i=0$ we obviously obtain the  cup-product on $Tot(E)$ as described above. 
\nl
For  $n \ge 2$ the   degree $(1-n)$ bracket:
$$\beta^{(n-1)}: E(p)\otimes E(q) \to E(p+q-n+1)$$  
is given by   
$$\beta^{(n-1)}(a\otimes b) = a\scup_{n-1} b -  (-1)^{n+(p-1)(q-1)} b \scup_{n-1}a = $$ 
\begin{equation}
\label{formula}=   \sum_{\psi\in \slp_{+m}^{(n)}(p,q)} (-1)^{(p-1)(q-1)}{\sgn(\psi)}p^{(n)}(\psi) (a\otimes b)  +\sum_{\psi\in \slp_{-m}^{(n)}(p,q)}  (-1)^{(n-1)}{\sgn(\psi)}p^{(n)}(\psi)(a\otimes b) .
\end{equation} 
\eth
\begin{proof} 
Since the action of $\ll$ on $Tot(E)$ is factorised through $\mm$ by application of Lemma (\ref{smooth-normal}) we see that it is sufficient to show that if  a lattice path $\psi$ is normal but its projection is not a smooth path in $\MM$ then it is a degenerate element in $\mm$ and hence the action of $p^{(n)}(\psi)$ vanishes.
\nl
Indeed, $p^{(n)}(\psi)$ is a non-smooth path only if there are two successive points on the path $\psi$ labelled $0.$ In this case its image in $\mm$ is clearly in the image of a simplicial degeneracy which `expands' the corresponding row or column in the commutative lattice. 
\end{proof} 

\begin{example} Below we present a normal lattice path (on the left), whose image is not a smooth path. Solid dots correspond to the label $1$, empty dots to the label $0.$ 
The corresponding path (on the right) can be obtained by expansion of the first row and, hence, is degenerate.  
$$\xygraph{
!{<0cm,0cm>;<.7cm,0cm>:<0cm,.7cm>::}
!{(-2,2)}*{\circ} ="uull" !{(-1,2)}*{\circ} ="uul" !{(0,2)}*{\circ} ="uu"  !{(1,2)}*{\circ}="uur"  !{(2,2)}*{\bullet}="urr"
!{(-2,1)}*{\circ} ="ull" !{(-1,1)}*{\circ} ="ul" !{(0,1)}*{\circ} ="u"  !{(1,1)}*{\circ}="ur"  !{(2,1)}*{\bullet}="rur"
!{(-2,0)}*{\circ} ="ll" !{(-1,0)}*{\circ} ="l" !{(0,0)}*{\bullet} ="m"  !{(1,0)}*{\bullet}="r"  !{(2,0)}*{\circ}="rr"
!{(-2,-1)}*{\bullet} ="dll" !{(-1,-1)}*{\circ} ="dl" !{(0,-1)}*{\circ} ="d"  !{(1,-1)}*{\circ}="dr"  !{(2,-1)}*{\circ}="drr"
%!{(-1,-.78)}*+{\scriptstyle 1}  !{(-.18,0)}*+{\scriptstyle 1} !{(1,.78)}*+{\scriptstyle 1}  !{(2.2,2)}*+{\scriptstyle 1}
%!{(-.15,1.15)}*+{\scriptstyle 0}  !{(.15,-1.15)}*+{\scriptstyle 0}  !{(2.15,.85)}*+{\scriptstyle 0}
%%%%%%%
"dll":"dl"  "dl":"l"     "l":"m" 
"m":"r"     "r":"rr"  "rr":"rur" "rur":"urr"
}\qquad\qquad\qquad\qquad
\xygraph{
!{<0cm,0cm>;<.7cm,0cm>:<0cm,.7cm>::}
!{(-2,2)}*{\circ} ="uull" !{(-1,2)}*{\circ} ="uul" !{(0,2)}*{\circ} ="uu"  !{(1,2)}*{\circ}="uur"  !{(2,2)}*{\bullet}="urr"
!{(-2,1)}*{\circ} ="ull" !{(-1,1)}*{\circ} ="ul" !{(0,1)}*{\circ} ="u"  !{(1,1)}*{\circ}="ur"  !{(2,1)}*{\bullet}="rur"
!{(-2,0)}*{\circ} ="ll" !{(-1,0)}*{\circ} ="l" !{(0,0)}*{\bullet} ="m"  !{(1,0)}*{\bullet}="r"  !{(2,0)}*{\circ}="rr"
!{(-2,-1)}*{\bullet} ="dll" !{(-1,-1)}*{\circ} ="dl" !{(0,-1)}*{\circ} ="d"  !{(1,-1)}*{\circ}="dr"  !{(2,-1)}*{\circ}="drr"
%!{(-1,-.78)}*+{\scriptstyle 1}  !{(-.18,0)}*+{\scriptstyle 1} !{(1,.78)}*+{\scriptstyle 1}  !{(2.2,2)}*+{\scriptstyle 1}
%!{(-.15,1.15)}*+{\scriptstyle 0}  !{(.15,-1.15)}*+{\scriptstyle 0}  !{(2.15,.85)}*+{\scriptstyle 0}
%%%%%%%
"dll":"m"    
"m":"r"    "r":"rur" "rur":"urr"
}$$
\end{example} 

\begin{remark} The Theorem \ref{bracketE} shows that the total complex of an $(n-1)$-commutative monoid is somewhat special among algebras of the lattice path operad $\LL^{(n)}.$ For instance, there are more nontrivial terms in the classical formula for the Gershtenhaber bracket on Hoshcshild complex then for the analogous bracket on the deformation complex of a monoidal functor.  
\end{remark}

\subsection{Chain operations of low complexity}

%{\bf $e_1$-action:}

 %Since we have the morphism $p^{(1)}:\LL^{(1)} \to \MM^{(0)}$ we have a $\cup$-product in any cosimplicial monoid coming from the corresponding operation in $\ll^1(2).$  This operation is well understood.  
 
In this section we write explicitly the Steenrod $\scup_i$-products on an $n$-cosimplicial monoid for $n=0,1$ and $2$ in terms of smooth path action. Cosimplicial monoids in this section means cosimplicial monoids in the symmetric monoidal category of modules over a commutative ring. We consider this category as a full monoidal subcategory of chain complexes 
concentrated in the degree $0,$ thus, all formulas from the previous section make sense.

The operation $\scup_0$ is defined in any cosimplicial monoid and we simply refer to it as the $\scup$-product.
The degree 1 bracket $\beta^{(1)}$ will also be denoted  $\{\da,\da\}$, while we will use $\bl\da,\da\br$ for the degree 2 bracket $\beta^{(2)}$. 
 \nl
 Consider first a very low dimensional example.
 \bex
Let $\delta:\langle 1\rangle\to \langle 1\rangle \times \langle 1\rangle.$ 
Graphically
$$\xygraph{
!{<0cm,0cm>;<1cm,0cm>:<0cm,1cm>::}
 !{(0,1)}*{\circ} ="u"  !{(1,1)}*{\circ}="ur"
 !{(0,0)}*{\circ} ="m"  !{(1,0)}*{\circ}="r"
%%%%%%%
"m":"ur" 
}$$
The path $\delta$ has just two liftings $\langle 1\rangle\to \langle 1\rangle \boxx \langle 1\rangle$
$$\xygraph{
!{<0cm,0cm>;<1cm,0cm>:<0cm,1cm>::}
 !{(0,1)}*{\circ} ="u"  !{(1,1)}*{\circ}="ur"
 !{(0,0)}*{\circ} ="m"  !{(1,0)}*{\circ}="r"
%%%%%%%
"m":"u"     "u":"ur" 
}\qquad\qquad
\qquad\qquad
\xygraph{
!{<0cm,0cm>;<1cm,0cm>:<0cm,1cm>::}
 !{(0,1)}*{\circ} ="u"  !{(1,1)}*{\circ}="ur"
 !{(0,0)}*{\circ} ="m"  !{(1,0)}*{\circ}="r"
%%%%%%%
"m":"r"     "r":"ur" 
}$$
which both have complexity $1$. This gives two operations  $(\delta,e) \ne (\delta,t)$ in $\M^{(0)}$. 
Let $E$ be a  cosimplicial monoid.
To read off the operations $E(0)\ot E(0)\to E(0)$ corresponding to $(\delta,e), (\delta,\tau)$ write $\delta:\langle 1\rangle\to \langle 1\rangle \times \langle 1\rangle$ as the composite 
$$ \xymatrix{\langle 1\rangle\ar[r]^(.34)\delta & \langle 1\rangle\times \langle 1\rangle \ar[r]^{(\widetilde\tau,\widetilde\pi)} & \langle 1\rangle\times \langle 1\rangle}$$
with $\widetilde\tau = \widetilde\pi = 1_{\langle 1\rangle}$. Clearly the Joyal dual maps are $\tau = \pi = 1_{[0]}$. 
Thus the operations $$E(0)\ot E(0)\to E(0)$$ are the direct and the reverse products in $E(0).$ The direct product coincides with the $\scup$-product in this case. 
If $E$ is a $1$-commutative cosimplicial monoid  $a\scup b = b\scup a$  since $(\delta,e)$ and $(\delta,t)$ are $1$-equivalent and so determine the same operation in $\MM^{(1)}.$  \eex

In general there exactly two  smooth  paths of linking number one on a rectangle (see formula (\ref{etanm}) in Section \ref{smooth paths lk=1.2}). 
 The path $\eta_{n,m}$ is  even and $\eta^t_{n,m}$ is odd.  Thus the cup product   
$$a\scup b =(-1)^{(n-1)(m-1)} (\partial_{n+m}...\partial_{n+2}\partial_{n+1})(a)\cdot(\partial_0)^n(b),\quad a\in E(n),\ b\in E(m)\ $$
as expected.
This is the only nontrivial Steenrod product in general (i.e. $0$-commutative) cosimplicial monoid.
\\

%{\bf $e_2$-action:}
In a $1$-commutative monoid we will have one more product $a\cup_1b$ and, hence, the first nontrivial bracket operation.

\bex
Consider the map $\delta:\langle 2\rangle\to \langle 2\rangle \times \langle 2\rangle$, sending $0\mapsto (0,0),\ 1\mapsto (1,1),\ 2\mapsto (2,2)$. This is the only smooth path with linking number  $1$ on this square.
Graphically
$$\xygraph{
!{<0cm,0cm>;<1cm,0cm>:<0cm,1cm>::}
!{(-1,1)}*{\circ} ="ul" !{(0,1)}*{\circ} ="u"  !{(1,1)}*{\circ}="ur"
!{(-1,0)}*{\circ} ="l" !{(0,0)}*{\circ} ="m"  !{(1,0)}*{\circ}="r"
!{(-1,-1)}*{\circ} ="dl" !{(0,-1)}*{\circ} ="d"  !{(1,-1)}*{\circ}="dr"
!{(-1.15,-.85)}*+{\scriptstyle 1}  !{(-.15,.15)}*+{\scriptstyle 1} !{(.85,1.15)}*+{\scriptstyle 1}
%%%%%%%
"dl":"m"     "m":"ur" 
}$$
The map $\delta$ has four liftings $\langle 2\rangle\to \langle 2\rangle \boxx \langle 2\rangle$
$$\xygraph{
!{<0cm,0cm>;<1cm,0cm>:<0cm,1cm>::}
!{(-1,1)}*{\circ} ="ul" !{(0,1)}*{\circ} ="u"  !{(1,1)}*{\circ}="ur"
!{(-1,0)}*{\circ} ="l" !{(0,0)}*{\circ} ="m"  !{(1,0)}*{\circ}="r"
!{(-1,-1)}*{\circ} ="dl" !{(0,-1)}*{\circ} ="d"  !{(1,-1)}*{\circ}="dr"
!{(-1.2,-1)}*+{\scriptstyle 1}   !{(.13,-.13)}*+{\scriptstyle 1} !{(1,1.2)}*+{\scriptstyle 1}
!{(-.15,1.15)}*+{\scriptstyle 0}  !{(-1.15,.15)}*+{\scriptstyle 0}
%%%%%%%
"dl":"l"     "l":"m" 
"m":"u"     "u":"ur" 
}\qquad\qquad
\xygraph{
!{<0cm,0cm>;<1cm,0cm>:<0cm,1cm>::}
!{(-1,1)}*{\circ} ="ul" !{(0,1)}*{\circ} ="u"  !{(1,1)}*{\circ}="ur"
!{(-1,0)}*{\circ} ="l" !{(0,0)}*{\circ} ="m"  !{(1,0)}*{\circ}="r"
!{(-1,-1)}*{\circ} ="dl" !{(0,-1)}*{\circ} ="d"  !{(1,-1)}*{\circ}="dr"
!{(-1,-.78)}*+{\scriptstyle 1}  !{(-.18,0)}*+{\scriptstyle 1} !{(1,.78)}*+{\scriptstyle 1}
!{(-.15,1.15)}*+{\scriptstyle 0}  !{(.15,-1.15)}*+{\scriptstyle 0}
%%%%%%%
"dl":"d"     "d":"m" 
"m":"u"     "u":"ur" 
}\qquad\qquad
\xygraph{
!{<0cm,0cm>;<1cm,0cm>:<0cm,1cm>::}
!{(-1,1)}*{\circ} ="ul" !{(0,1)}*{\circ} ="u"  !{(1,1)}*{\circ}="ur"
!{(-1,0)}*{\circ} ="l" !{(0,0)}*{\circ} ="m"  !{(1,0)}*{\circ}="r"
!{(-1,-1)}*{\circ} ="dl" !{(0,-1)}*{\circ} ="d"  !{(1,-1)}*{\circ}="dr"
!{(-1.2,-1)}*+{\scriptstyle 1}  !{(0,.2)}*+{\scriptstyle 1} !{(1.2,1)}*+{\scriptstyle 1}
!{(-1.15,.15)}*+{\scriptstyle 0}  !{(1.15,-.15)}*+{\scriptstyle 0}
%%%%%%%
"dl":"l"     "l":"m" 
"m":"r"     "r":"ur" 
}\qquad\qquad
\xygraph{
!{<0cm,0cm>;<1cm,0cm>:<0cm,1cm>::}
!{(-1,1)}*{\circ} ="ul" !{(0,1)}*{\circ} ="u"  !{(1,1)}*{\circ}="ur"
!{(-1,0)}*{\circ} ="l" !{(0,0)}*{\circ} ="m"  !{(1,0)}*{\circ}="r"
!{(-1,-1)}*{\circ} ="dl" !{(0,-1)}*{\circ} ="d"  !{(1,-1)}*{\circ}="dr"
!{(-1.2,-1)}*+{\scriptstyle 1}  !{(-.15,.15)}*+{\scriptstyle 1} !{(1.2,1)}*+{\scriptstyle 1}
!{(.15,-1.15)}*+{\scriptstyle 0}  !{(1.15,-.15)}*+{\scriptstyle 0}
%%%%%%%
"dl":"d"     "d":"m" 
"m":"r"     "r":"ur" 
}$$
which all have complexity $\geq 2$. 
Only two middle liftings have complexity $2$ so they come as  images of the elements of $\LL^{(2)}$ (which are exactly liftings $\psi_1$ and $\psi_2$ exhibited) and give two distinct elements $(\delta,e)$ and $(\delta,t)$ of $\MM^{(1)}.$ 
Of course,  $(\delta,e) = (\delta,t)$ in $\M^{(2)}$. 

Let now $E$ be a $1$-commutative cosimplicial monoid.

To read off the operations $E(1)\ot E(1)\to E(1)$ corresponding to $(\delta,e), (\delta,\tau)$ write $\delta:\langle 2\rangle\to \langle 2\rangle \times \langle 2\rangle$ as the composite 
$$ \xymatrix{\langle 2\rangle\ar[r]^(.34)\delta & \langle 2\rangle\times \langle 2\rangle \ar[r]^{(\widetilde\tau,\widetilde\pi)} & \langle 2\rangle\times \langle 2\rangle}$$
with $\widetilde\tau = \widetilde\pi = 1_{\langle 2\rangle}$. Clearly the Joyal dual maps are $\tau = \pi = 1_{[1]}$. 
Thus these two operations $$E(1)\ot E(1)\to E(1)$$ are the direct and the reverse products and the $a\scup_1 b = a b.$ %Let $a\circ_1 b = ab$ be the direct product. 

%Of course they coincide for a 2-commutative cosimplicial monoid.

Then the degree $1$ bracket $\beta^{(1)}(a\otimes b) =   \{ a, b \}$ on a $1$-commutative $E(1)$ coincides with the algebra commutator 
$$\{a,b\} =  a b - b  a\ .$$ 
Indeed, in this case we have only two summands  in the formula (\ref{formula}) which correspond to two middle paths. %Since  for the first path the first movement order is $(12)$  and the corresponding shuffle $\mu(\psi^\dag) = (1342)$ is even  
%$\sgn(\varpi(\psi))^{p+q-n+1}\sgn(\mu(\psi^\dag))= 1^{1+1-2+1}\cdot 1 = 1$ and the product $ab$ enters (\ref{formula}) with the $(+)$-sign. On the other hand, for the second path the corresponding shuffle is even also but the first movement order is $(21).$ Thus $ba$ enters to (\ref{formula}) with the $(-)$-sign.     

\eex

\bex Consider the $\beta^{(1)}:E(2)\otimes E(1) \to E(2).$ It is determined by liftings of complexity $2$ of the smooth paths
$$\rho:\langle 3\rangle\to \langle 3\rangle \times \langle 2\rangle.$$ 
There are exactly three paths like this all with a unique lifting %{\color{red} CUT OFF THE TOP ROW AND  RIGHT COLUMN.} 
$$\xygraph{
!{<0cm,0cm>;<1cm,0cm>:<0cm,1cm>::}
%!{(-2,2)}*{\circ} ="uull" !{(-1,2)}*{\circ} ="uul" !{(0,2)}*{\circ} ="uu"  !{(1,2)}*{\circ}="uur"  !{(2,2)}*{\circ}="urr"
!{(-2,1)}*{\circ} ="ull" !{(-1,1)}*{\circ} ="ul" !{(0,1)}*{\circ} ="u"  !{(1,1)}*{\circ}="ur"  %!{(2,1)}*{\circ}="rur"
!{(-2,0)}*{\circ} ="ll" !{(-1,0)}*{\circ} ="l" !{(0,0)}*{\circ} ="m"  !{(1,0)}*{\circ}="r"  %!{(2,0)}*{\circ}="rr"
!{(-2,-1)}*{\circ} ="dll" !{(-1,-1)}*{\circ} ="dl" !{(0,-1)}*{\circ} ="d"  !{(1,-1)}*{\circ}="dr"  %!{(2,-1)}*{\circ}="drr"
%!{(-1,-.78)}*+{\scriptstyle 1}  !{(-.18,0)}*+{\scriptstyle 1} !{(1,.78)}*+{\scriptstyle 1}  !{(2.2,2)}*+{\scriptstyle 1}
%!{(-.15,1.15)}*+{\scriptstyle 0}  !{(.15,-1.15)}*+{\scriptstyle 0}  !{(2.15,.85)}*+{\scriptstyle 0}
!{(-3,.5)}*+{\rho_0\ =}
%%%%%%%
"dll":"dl"  "dl":"d"     "d":"m" 
"m":"u"     "u":"ur"  %"ur":"rur" "rur":"urr"
"dl"-@{..>}"m"  "m"-@{..>}"ur"  % "ur"-@{..>}"urr"   
}\qquad\qquad\qquad\qquad
\xygraph{
!{<0cm,0cm>;<1cm,0cm>:<0cm,1cm>::}
%!{(-1,2)}*{\circ} ="uul" !{(0,2)}*{\circ} ="uu"  !{(1,2)}*{\circ}="uur"  !{(2,2)}*{\circ}="urr"  !{(3,2)}*{\circ}="urrr"
!{(-1,1)}*{\circ} ="ul" !{(0,1)}*{\circ} ="u"  !{(1,1)}*{\circ}="ur"  !{(2,1)}*{\circ}="rur"  %!{(3,1)}*{\circ}="rurr"
!{(-1,0)}*{\circ} ="l" !{(0,0)}*{\circ} ="m"  !{(1,0)}*{\circ}="r"  !{(2,0)}*{\circ}="rr"  %!{(3,0)}*{\circ}="rrr"
!{(-1,-1)}*{\circ} ="dl" !{(0,-1)}*{\circ} ="d"  !{(1,-1)}*{\circ}="dr"  !{(2,-1)}*{\circ}="drr"  %!{(3,-1)}*{\circ}="drrr"
%!{(-1,-.78)}*+{\scriptstyle 1}  !{(-.18,0)}*+{\scriptstyle 1} !{(1,.78)}*+{\scriptstyle 1}  !{(2.2,2)}*+{\scriptstyle 1}
%!{(-.15,1.15)}*+{\scriptstyle 0}  !{(.15,-1.15)}*+{\scriptstyle 0}  !{(2.15,.85)}*+{\scriptstyle 0}
!{(-2,.5)}*+{\rho_2\ =}
%%%%%%%
"dl":"d"     "d":"m" 
"m":"u"     "u":"ur"  "ur":"rur" %"rur":"rurr"  "rurr":"urrr"
"dl"-@{..>}"m"  "m"-@{..>}"ur"   %"rur"-@{..>}"urrr"   
}$$
\bigskip\bigskip
$$\xygraph{
!{<0cm,0cm>;<1cm,0cm>:<0cm,1cm>::}
%!{(-2,2)}*{\circ} ="uull"  !{(-1,2)}*{\circ} ="uul" !{(0,2)}*{\circ} ="uu"  !{(1,2)}*{\circ}="uur"  !{(2,2)}*{\circ}="urr"
!{(-2,1)}*{\circ} ="ull"  !{(-1,1)}*{\circ} ="ul" !{(0,1)}*{\circ} ="u"  !{(1,1)}*{\circ}="ur"  %!{(2,1)}*{\circ}="rur"
!{(-2,0)}*{\circ} ="ll"  !{(-1,0)}*{\circ} ="l" !{(0,0)}*{\circ} ="m"  !{(1,0)}*{\circ}="r"  %!{(2,0)}*{\circ}="rr"
!{(-2,-1)}*{\circ} ="dll" !{(-1,-1)}*{\circ} ="dl" !{(0,-1)}*{\circ} ="d"  !{(1,-1)}*{\circ}="dr"  %!{(2,-1)}*{\circ}="drr"
%!{(-1.2,-1)}*+{\scriptstyle 1}  !{(0,.2)}*+{\scriptstyle 1} !{(1.2,1)}*+{\scriptstyle 1}  !{(2,2.2)}*+{\scriptstyle 1}
%!{(-1.15,.15)}*+{\scriptstyle 0}  !{(1.15,-.15)}*+{\scriptstyle 0}  !{(.85,2.15)}*+{\scriptstyle 0}
!{(-3,.5)}*+{\rho_1\ =}
%%%%%%%
"dll":"ll"  "ll":"l"     "l":"m" 
"m":"r"     "r":"ur" % "ur":"uur"  "uur":"urr"
"dll"-@{..>}"l"  "m"-@{..>}"ur"   %"ur"-@{..>}"urr"   
}$$
 
The first path has $\varpi(\rho_0) = (12)$ and $\mu(\rho_0^\dag) = (12453)$ even. So its entry to the formula is $a\partial_0(b).$ For the second path we again have the corresponding shuffle even and its entry is $a\partial_2(b).$ Finally for the third path the $\varpi(\rho_1) = (21)$ and $\mu(\rho_1^\dag) = (41235)$ is odd so it enters with $(-1)^0(-1)\partial_1(b)a.$ Thus the bracket operation is
$$\{a, b\} = a(\partial_0(b)+\partial_2(b)) - \partial_1(b) a .$$

\eex 

\bigskip

In general  we get the following formula for  $\beta^{(1)}: E(p)\otimes  E(q)\to E(p+q-1).$ A typical even smooth lattice path of complexity  two  is shown in (\ref{etanmi}).  %{\color{red} ADD RIGHT COLUMN}:

It is not hard to see that the parity of the corresponding shuffle is $(-1)^{(q+1)(p-i+1)} = (-1)^{(q+1)(p+1)-i(q+1)}$.
 Let us denote its action on $a\otimes b$ by $a\circ_i b.$ 
 We also have corresponding odd paths whose action on $a\otimes b$ we  denote by $b\circ_ i a.$ 
 Then the formula (\ref{formula}) gives:  
 $$\{a, b\} = \sum_{i=1}^p (-1)^{(p-1)(q-1)}(-1)^{(q+1)(p+1)-i(q+1)}a\circ_i b - \sum_{i=1}^q (-1)^{(q+1)(p+1)-i(p+1)}b\circ_i a.$$ 
We then obtain:
\begin{equation}\label{ger} \{a, b\} = \sum_{i=1}^p (-1)^{i(q-1)}a\circ_i b - (-1)^{(p-1)(q-1)}\sum_{i=1}^q (-1)^{i(p-1)}b\circ_i a.\end{equation} 

\begin{remark} To get  a bracket on the Hochschild complex of an algebra we can apply Theorem \ref{formulal} instead
and obtain exactly the classical  formula for Gershtenhaber bracket on Hochschild complex. %The bracket (\ref{ger}) contains less terms, but otherwise is very similar. 
\end{remark} 

%{\bf $e_3$-action:}

%secondary commutativity coboundary on a $2$-commutative $E^*$

%gives rise to a trivialising homotopy for the 1 bracket $\{ ,\}$
%\\

Finally, consider the case of $2$-commutative cosimplicial monoids. 

\bex
All liftings of the map $\delta:\langle 3\rangle\to \langle 3\rangle \times \langle 3\rangle$
$$\xygraph{
!{<0cm,0cm>;<1cm,0cm>:<0cm,1cm>::}
!{(-1,2)}*{\circ} ="uul" !{(0,2)}*{\circ} ="uu"  !{(1,2)}*{\circ}="uur"  !{(2,2)}*{\circ}="urr"
!{(-1,1)}*{\circ} ="ul" !{(0,1)}*{\circ} ="u"  !{(1,1)}*{\circ}="ur"  !{(2,1)}*{\circ}="rur"
!{(-1,0)}*{\circ} ="l" !{(0,0)}*{\circ} ="m"  !{(1,0)}*{\circ}="r"  !{(2,0)}*{\circ}="rr"
!{(-1,-1)}*{\circ} ="dl" !{(0,-1)}*{\circ} ="d"  !{(1,-1)}*{\circ}="dr"  !{(2,-1)}*{\circ}="drr"
!{(-1.15,-.85)}*+{\scriptstyle 1}  !{(-.15,.15)}*+{\scriptstyle 1} !{(.85,1.15)}*+{\scriptstyle 1}  !{(1.85,2.15)}*+{\scriptstyle 1}
%%%%%%%
"dl":"m"     "m":"ur"  "ur":"urr"
}$$
to $\langle 3\rangle\to \langle 3\rangle \boxx\langle 3\rangle$ have complexity $\geq 3$. 
The complexity $3$ liftings are
$$\xygraph{
!{<0cm,0cm>;<1cm,0cm>:<0cm,1cm>::}
!{(-1,2)}*{\circ} ="uul" !{(0,2)}*{\circ} ="uu"  !{(1,2)}*{\circ}="uur"  !{(2,2)}*{\circ}="urr"
!{(-1,1)}*{\circ} ="ul" !{(0,1)}*{\circ} ="u"  !{(1,1)}*{\circ}="ur"  !{(2,1)}*{\circ}="rur"
!{(-1,0)}*{\circ} ="l" !{(0,0)}*{\circ} ="m"  !{(1,0)}*{\circ}="r"  !{(2,0)}*{\circ}="rr"
!{(-1,-1)}*{\circ} ="dl" !{(0,-1)}*{\circ} ="d"  !{(1,-1)}*{\circ}="dr"  !{(2,-1)}*{\circ}="drr"
!{(-1,-.78)}*+{\scriptstyle 1}  !{(-.18,0)}*+{\scriptstyle 1} !{(1,.78)}*+{\scriptstyle 1}  !{(2.2,2)}*+{\scriptstyle 1}
!{(-.15,1.15)}*+{\scriptstyle 0}  !{(.15,-1.15)}*+{\scriptstyle 0}  !{(2.15,.85)}*+{\scriptstyle 0}
%%%%%%%
"dl":"d"     "d":"m" 
"m":"u"     "u":"ur"  "ur":"rur" "rur":"urr"
}\qquad\qquad\qquad\qquad
\xygraph{
!{<0cm,0cm>;<1cm,0cm>:<0cm,1cm>::}
!{(-1,2)}*{\circ} ="uul" !{(0,2)}*{\circ} ="uu"  !{(1,2)}*{\circ}="uur"  !{(2,2)}*{\circ}="urr"
!{(-1,1)}*{\circ} ="ul" !{(0,1)}*{\circ} ="u"  !{(1,1)}*{\circ}="ur"  !{(2,1)}*{\circ}="rur"
!{(-1,0)}*{\circ} ="l" !{(0,0)}*{\circ} ="m"  !{(1,0)}*{\circ}="r"  !{(2,0)}*{\circ}="rr"
!{(-1,-1)}*{\circ} ="dl" !{(0,-1)}*{\circ} ="d"  !{(1,-1)}*{\circ}="dr"  !{(2,-1)}*{\circ}="drr"
!{(-1.2,-1)}*+{\scriptstyle 1}  !{(0,.2)}*+{\scriptstyle 1} !{(1.2,1)}*+{\scriptstyle 1}  !{(2,2.2)}*+{\scriptstyle 1}
!{(-1.15,.15)}*+{\scriptstyle 0}  !{(1.15,-.15)}*+{\scriptstyle 0}  !{(.85,2.15)}*+{\scriptstyle 0}
%%%%%%%
"dl":"l"     "l":"m" 
"m":"r"     "r":"ur"  "ur":"uur"  "uur":"urr"
}$$
%Thus $(\delta,e) \not= (\delta,t)$ in $\M^{(3)}$. 
The corresponding operations $E(2)\ot E(2)\to E(2)$ on a 2-commutative cosimplicial monoid %$E(*)$
are the direct and the reverse product and $$a\scup_2 b = (-1)^{(2-1)(2-1)}ab = - a b.$$ 
\nl
The degree 2 bracket $\beta^{(2)}:E(2)\ot E(2)\to E(2)$ on a $2$-commutative $E$ coincides with the opposite to the algebra commutator 
$$\bl a,b\br =  ba - ab\ .$$
\eex
\bigskip

\bex
There are four smooth lattice paths $\langle 4\rangle\to \langle 4\rangle \boxx \langle 3\rangle$ of complexity $3$: 
$$\xygraph{
!{<0cm,0cm>;<1cm,0cm>:<0cm,1cm>::}
!{(-2,2)}*{\circ} ="uull" !{(-1,2)}*{\circ} ="uul" !{(0,2)}*{\circ} ="uu"  !{(1,2)}*{\circ}="uur"  !{(2,2)}*{\circ}="urr"
!{(-2,1)}*{\circ} ="ull" !{(-1,1)}*{\circ} ="ul" !{(0,1)}*{\circ} ="u"  !{(1,1)}*{\circ}="ur"  !{(2,1)}*{\circ}="rur"
!{(-2,0)}*{\circ} ="ll" !{(-1,0)}*{\circ} ="l" !{(0,0)}*{\circ} ="m"  !{(1,0)}*{\circ}="r"  !{(2,0)}*{\circ}="rr"
!{(-2,-1)}*{\circ} ="dll" !{(-1,-1)}*{\circ} ="dl" !{(0,-1)}*{\circ} ="d"  !{(1,-1)}*{\circ}="dr"  !{(2,-1)}*{\circ}="drr"
%!{(-1,-.78)}*+{\scriptstyle 1}  !{(-.18,0)}*+{\scriptstyle 1} !{(1,.78)}*+{\scriptstyle 1}  !{(2.2,2)}*+{\scriptstyle 1}
%!{(-.15,1.15)}*+{\scriptstyle 0}  !{(.15,-1.15)}*+{\scriptstyle 0}  !{(2.15,.85)}*+{\scriptstyle 0}
!{(-3,.5)}*+{\rho_0\ =}
%%%%%%%
"dll":"dl"  "dl":"d"     "d":"m" 
"m":"u"     "u":"ur"  "ur":"rur" "rur":"urr"
"dl"-@{..>}"m"  "m"-@{..>}"ur"   "ur"-@{..>}"urr"   
}\qquad\qquad\qquad\qquad
\xygraph{
!{<0cm,0cm>;<1cm,0cm>:<0cm,1cm>::}
!{(-1,2)}*{\circ} ="uul" !{(0,2)}*{\circ} ="uu"  !{(1,2)}*{\circ}="uur"  !{(2,2)}*{\circ}="urr"  !{(3,2)}*{\circ}="urrr"
!{(-1,1)}*{\circ} ="ul" !{(0,1)}*{\circ} ="u"  !{(1,1)}*{\circ}="ur"  !{(2,1)}*{\circ}="rur"  !{(3,1)}*{\circ}="rurr"
!{(-1,0)}*{\circ} ="l" !{(0,0)}*{\circ} ="m"  !{(1,0)}*{\circ}="r"  !{(2,0)}*{\circ}="rr"  !{(3,0)}*{\circ}="rrr"
!{(-1,-1)}*{\circ} ="dl" !{(0,-1)}*{\circ} ="d"  !{(1,-1)}*{\circ}="dr"  !{(2,-1)}*{\circ}="drr"  !{(3,-1)}*{\circ}="drrr"
%!{(-1,-.78)}*+{\scriptstyle 1}  !{(-.18,0)}*+{\scriptstyle 1} !{(1,.78)}*+{\scriptstyle 1}  !{(2.2,2)}*+{\scriptstyle 1}
%!{(-.15,1.15)}*+{\scriptstyle 0}  !{(.15,-1.15)}*+{\scriptstyle 0}  !{(2.15,.85)}*+{\scriptstyle 0}
!{(-2,.5)}*+{\rho_2\ =}
%%%%%%%
"dl":"d"     "d":"m" 
"m":"u"     "u":"ur"  "ur":"rur" "rur":"rurr"  "rurr":"urrr"
"dl"-@{..>}"m"  "m"-@{..>}"ur"   "rur"-@{..>}"urrr"   
}$$
\bigskip\bigskip
$$\xygraph{
!{<0cm,0cm>;<1cm,0cm>:<0cm,1cm>::}
!{(-2,2)}*{\circ} ="uull"  !{(-1,2)}*{\circ} ="uul" !{(0,2)}*{\circ} ="uu"  !{(1,2)}*{\circ}="uur"  !{(2,2)}*{\circ}="urr"
!{(-2,1)}*{\circ} ="ull"  !{(-1,1)}*{\circ} ="ul" !{(0,1)}*{\circ} ="u"  !{(1,1)}*{\circ}="ur"  !{(2,1)}*{\circ}="rur"
!{(-2,0)}*{\circ} ="ll"  !{(-1,0)}*{\circ} ="l" !{(0,0)}*{\circ} ="m"  !{(1,0)}*{\circ}="r"  !{(2,0)}*{\circ}="rr"
!{(-2,-1)}*{\circ} ="dll" !{(-1,-1)}*{\circ} ="dl" !{(0,-1)}*{\circ} ="d"  !{(1,-1)}*{\circ}="dr"  !{(2,-1)}*{\circ}="drr"
%!{(-1.2,-1)}*+{\scriptstyle 1}  !{(0,.2)}*+{\scriptstyle 1} !{(1.2,1)}*+{\scriptstyle 1}  !{(2,2.2)}*+{\scriptstyle 1}
%!{(-1.15,.15)}*+{\scriptstyle 0}  !{(1.15,-.15)}*+{\scriptstyle 0}  !{(.85,2.15)}*+{\scriptstyle 0}
!{(-3,.5)}*+{\rho_1\ =}
%%%%%%%
"dll":"ll"  "ll":"l"     "l":"m" 
"m":"r"     "r":"ur"  "ur":"uur"  "uur":"urr"
"dll"-@{..>}"l"  "m"-@{..>}"ur"   "ur"-@{..>}"urr"   
}\qquad\qquad\qquad\qquad
\xygraph{
!{<0cm,0cm>;<1cm,0cm>:<0cm,1cm>::}
!{(-1,2)}*{\circ} ="uul" !{(0,2)}*{\circ} ="uu"  !{(1,2)}*{\circ}="uur"  !{(2,2)}*{\circ}="urr"  !{(3,2)}*{\circ}="urrr"
!{(-1,1)}*{\circ} ="ul" !{(0,1)}*{\circ} ="u"  !{(1,1)}*{\circ}="ur"  !{(2,1)}*{\circ}="rur"  !{(3,1)}*{\circ}="rurr"
!{(-1,0)}*{\circ} ="l" !{(0,0)}*{\circ} ="m"  !{(1,0)}*{\circ}="r"  !{(2,0)}*{\circ}="rr"  !{(3,0)}*{\circ}="rrr"
!{(-1,-1)}*{\circ} ="dl" !{(0,-1)}*{\circ} ="d"  !{(1,-1)}*{\circ}="dr"  !{(2,-1)}*{\circ}="drr"  !{(3,-1)}*{\circ}="drrr"
%!{(-1.2,-1)}*+{\scriptstyle 1}  !{(0,.2)}*+{\scriptstyle 1} !{(1.2,1)}*+{\scriptstyle 1}  !{(2,2.2)}*+{\scriptstyle 1}
%!{(-1.15,.15)}*+{\scriptstyle 0}  !{(1.15,-.15)}*+{\scriptstyle 0}  !{(.85,2.15)}*+{\scriptstyle 0}
!{(-2,.5)}*+{\rho_3\ =}
%%%%%%%
"dl":"l"     "l":"m" 
"m":"r"     "r":"ur"  "ur":"uur"  "uur":"urr"  "urr":"urrr"
"dl"-@{..>}"m"  "m"-@{..>}"ur"   "ur"-@{..>}"urr"   
}$$
%Here all internal corners are of multiplicity zero and all other path points are of multiplicity one.
The dotted paths represent corresponding smooth paths.  %Let us  denote by $\rho_i:\langle 4\rangle\to \langle 3\rangle \times \langle 4\rangle$. %They have decompositions
%$\rho_i = (\widetilde{\partial_i}\times \widetilde 1)\delta_{\langle 4\rangle}\ .$
Thus the corresponding operations $E(3)\ot E(2)\to E(3)$ on a $2$-commutative cosimplicial monoid $E$
are sending $a\otimes b\in E(3)\ot E(2)$ to 
$\ a\partial_0(b),\ a\partial_2(b),\  \partial_1(b)a,\ \partial_3(b)a$ respectively.
\nl
The degree 2 bracket $\beta^{(2)}:E(3)\ot E(2)\to E(3)$  is given by 
\beq\lb{23b}\bl a, b\br = \beta^{(2)}(a\otimes b) = a(\partial_0(b) + \partial_2(b)) - (\partial_1(b)+\partial_3(b))a\ .\eeq
\eex
\bigskip

\bex
There are eight smooth lattice paths $\langle 5\rangle\to \langle 4\rangle \boxx\langle 4\rangle$ of complexity $3$, four of each are even: 
$$\xygraph{
!{<0cm,0cm>;<1cm,0cm>:<0cm,1cm>::}
!{(-2,2)}*{\circ} ="uuull" !{(-1,2)}*{\circ} ="uuul" !{(0,2)}*{\circ} ="uuu"  !{(1,2)}*{\circ}="uuur"  !{(2,2)}*{\circ}="uurr"
!{(-2,1)}*{\circ} ="uull" !{(-1,1)}*{\circ} ="uul" !{(0,1)}*{\circ} ="uu"  !{(1,1)}*{\circ}="uur"  !{(2,1)}*{\circ}="urr"
!{(-2,0)}*{\circ} ="ull" !{(-1,0)}*{\circ} ="ul" !{(0,0)}*{\circ} ="u"  !{(1,0)}*{\circ}="ur"  !{(2,0)}*{\circ}="rur"
!{(-2,-1)}*{\circ} ="ll" !{(-1,-1)}*{\circ} ="l" !{(0,-1)}*{\circ} ="m"  !{(1,-1)}*{\circ}="r"  !{(2,-1)}*{\circ}="rr"
!{(-2,-2)}*{\circ} ="dll" !{(-1,-2)}*{\circ} ="dl" !{(0,-2)}*{\circ} ="d"  !{(1,-2)}*{\circ}="dr"  !{(2,-2)}*{\circ}="drr"
%!{(-1,-.78)}*+{\scriptstyle 1}  !{(-.18,0)}*+{\scriptstyle 1} !{(1,.78)}*+{\scriptstyle 1}  !{(2.2,2)}*+{\scriptstyle 1}
%!{(-.15,1.15)}*+{\scriptstyle 0}  !{(.15,-1.15)}*+{\scriptstyle 0}  !{(2.15,.85)}*+{\scriptstyle 0}
!{(-3,0)}*+{\rho_{40}\ =}
%%%%%%%
"dll":"dl"  "dl":"d"     "d":"m" 
"m":"u"     "u":"ur"  "ur":"rur" "rur":"urr"  "urr":"uurr"
"dl"-@{..>}"m"  "m"-@{..>}"ur"   "ur"-@{..>}"urr"   
}\qquad\qquad\qquad\qquad
\xygraph{
!{<0cm,0cm>;<1cm,0cm>:<0cm,1cm>::}
!{(-1,2)}*{\circ} ="uul" !{(0,2)}*{\circ} ="uu"  !{(1,2)}*{\circ}="uur"  !{(2,2)}*{\circ}="urr"  !{(3,2)}*{\circ}="urrr"
!{(-1,1)}*{\circ} ="ul" !{(0,1)}*{\circ} ="u"  !{(1,1)}*{\circ}="ur"  !{(2,1)}*{\circ}="rur"  !{(3,1)}*{\circ}="rurr"
!{(-1,0)}*{\circ} ="l" !{(0,0)}*{\circ} ="m"  !{(1,0)}*{\circ}="r"  !{(2,0)}*{\circ}="rr"  !{(3,0)}*{\circ}="rrr"
!{(-1,-1)}*{\circ} ="ml" !{(0,-1)}*{\circ} ="md"  !{(1,-1)}*{\circ}="mdr"  !{(2,-1)}*{\circ}="mdrr"  !{(3,-1)}*{\circ}="mdrrr"
!{(-1,-2)}*{\circ} ="dl" !{(0,-2)}*{\circ} ="d"  !{(1,-2)}*{\circ}="dr"  !{(2,-2)}*{\circ}="drr"  !{(3,-2)}*{\circ}="drrr"
%!{(-1,-.78)}*+{\scriptstyle 1}  !{(-.18,0)}*+{\scriptstyle 1} !{(1,.78)}*+{\scriptstyle 1}  !{(2.2,2)}*+{\scriptstyle 1}
%!{(-.15,1.15)}*+{\scriptstyle 0}  !{(.15,-1.15)}*+{\scriptstyle 0}  !{(2.15,.85)}*+{\scriptstyle 0}
!{(-2,0)}*+{\rho_{13}\ =}
%%%%%%%
"dl":"d"     "d":"md" 
"md":"m"  "m":"u"   "u":"ur"  "ur":"rur" "rur":"rurr"  "rurr":"urrr"
"dl"-@{..>}"md"  "m"-@{..>}"ur"   "rur"-@{..>}"urrr"   
}$$
\bigskip\bigskip\bigskip
$$\xygraph{
!{<0cm,0cm>;<1cm,0cm>:<0cm,1cm>::}
!{(-1,3)}*{\circ} ="uuull" !{(0,3)}*{\circ} ="uuul" !{(1,3)}*{\circ} ="uuu"  !{(2,3)}*{\circ}="uuur"  !{(3,3)}*{\circ}="uurr"
!{(-1,2)}*{\circ} ="uul" !{(0,2)}*{\circ} ="uu"  !{(1,2)}*{\circ}="uur"  !{(2,2)}*{\circ}="urr"  !{(3,2)}*{\circ}="urrr"
!{(-1,1)}*{\circ} ="ul" !{(0,1)}*{\circ} ="u"  !{(1,1)}*{\circ}="ur"  !{(2,1)}*{\circ}="rur"  !{(3,1)}*{\circ}="rurr"
!{(-1,0)}*{\circ} ="l" !{(0,0)}*{\circ} ="m"  !{(1,0)}*{\circ}="r"  !{(2,0)}*{\circ}="rr"  !{(3,0)}*{\circ}="rrr"
!{(-1,-1)}*{\circ} ="dl" !{(0,-1)}*{\circ} ="d"  !{(1,-1)}*{\circ}="dr"  !{(2,-1)}*{\circ}="drr"  !{(3,-1)}*{\circ}="drrr"
%!{(-1,-.78)}*+{\scriptstyle 1}  !{(-.18,0)}*+{\scriptstyle 1} !{(1,.78)}*+{\scriptstyle 1}  !{(2.2,2)}*+{\scriptstyle 1}
%!{(-.15,1.15)}*+{\scriptstyle 0}  !{(.15,-1.15)}*+{\scriptstyle 0}  !{(2.15,.85)}*+{\scriptstyle 0}
!{(-2,1)}*+{\rho_{20}\ =}
%%%%%%%
"dl":"d"     "d":"dr" 
"dr":"r"     "r":"ur"  "ur":"uur" "uur":"urr"  "urr":"urrr"  "urrr":"uurr"
"d"-@{..>}"r"  "ur"-@{..>}"urr"   "urr"-@{..>}"uurr"   
}\qquad\qquad\qquad\qquad
\xygraph{
!{<0cm,0cm>;<1cm,0cm>:<0cm,1cm>::}
!{(-1,3)}*{\circ} ="uuull" !{(0,3)}*{\circ} ="uuul" !{(1,3)}*{\circ} ="uuu"  !{(2,3)}*{\circ}="uuur"  !{(3,3)}*{\circ}="uurr"
!{(-1,2)}*{\circ} ="uul" !{(0,2)}*{\circ} ="uu"  !{(1,2)}*{\circ}="uur"  !{(2,2)}*{\circ}="urr"  !{(3,2)}*{\circ}="urrr"
!{(-1,1)}*{\circ} ="ul" !{(0,1)}*{\circ} ="u"  !{(1,1)}*{\circ}="ur"  !{(2,1)}*{\circ}="rur"  !{(3,1)}*{\circ}="rurr"
!{(-1,0)}*{\circ} ="l" !{(0,0)}*{\circ} ="m"  !{(1,0)}*{\circ}="r"  !{(2,0)}*{\circ}="rr"  !{(3,0)}*{\circ}="rrr"
!{(-1,-1)}*{\circ} ="dl" !{(0,-1)}*{\circ} ="d"  !{(1,-1)}*{\circ}="dr"  !{(2,-1)}*{\circ}="drr"  !{(3,-1)}*{\circ}="drrr"
%!{(-1,-.78)}*+{\scriptstyle 1}  !{(-.18,0)}*+{\scriptstyle 1} !{(1,.78)}*+{\scriptstyle 1}  !{(2.2,2)}*+{\scriptstyle 1}
%!{(-.15,1.15)}*+{\scriptstyle 0}  !{(.15,-1.15)}*+{\scriptstyle 0}  !{(2.15,.85)}*+{\scriptstyle 0}
!{(-2,1)}*+{\rho_{42}\ =}
%%%%%%%
"dl":"d"     "d":"m" 
"m":"u"     "u":"ur"  "ur":"rur" "rur":"rurr"  "rurr":"urrr"  "urrr":"uurr"
"dl"-@{..>}"m"  "m"-@{..>}"ur"   "rur"-@{..>}"urrr"   
}$$
Another four odd paths can be obtained from the above by rotation by $180^\circ$ and the orientation change (which also coincides with the symmetry along the diagonal in this case).
%As before all internal corners are of multiplicity zero and all other path points are of multiplicity one.
%The dotted paths represent corresponding elements of $\MM$, which we also denote by $\rho_{ij}:\langle 5\rangle\to \langle 4\rangle \times \langle 4\rangle$. They have decompositions
%$\rho_{ij} = (\widetilde{\partial_i}\times \widetilde{\partial_j})\delta_{\langle 5\rangle}\ .$
The corresponding even operations $E(3)\ot E(3)\to E(4)$ on a $2$-commutative cosimplicial monoid $E$
are sending $a\otimes b\in E(3)\ot E(3)$ to 
$$\partial_2(a)\partial_0(b),\quad \partial_4(a)\partial_0(b),\quad \partial_4(a)\partial_2(b),\quad \partial_1(a)\partial_3(b)$$ respectively.
\nl
The degree 2 bracket $\beta^{(2)}:E(3)\ot E(3)\to E(4)$ is given by 
$$\bl a, b\br = \partial_2(a)\partial_0(b) + \partial_4(a)\partial_0(b) + \partial_4(a)\partial_2(b) - \partial_1(a)\partial_3(b) +$$
\beq\lb{33b}+ \partial_2(b)\partial_0(a) + \partial_4(b)\partial_0(a) + \partial_4(b)\partial_2(a) - \partial_1(b)\partial_3(a)\ .\eeq
\eex

\subsection{Symmetric cosimplicial monoids and Hodge decomposition}\lb{scm}

Let $\Fin$ be the skeletal category of pointed finite sets. The objects of it are finite ordinals $[n] = \{0,\ldots,n\},$ where we consider $0$ as a based point, and morphisms are any maps which preserve base points. There is a functor $D:\mathbf{Int}\to \Fin$ defined by: $$D(\langle n\rangle) = [n-1]$$ and for an interval map $\phi:\langle n\rangle \to \langle m \rangle:$  $$D(\phi)(i) = \phi(i) \ \mbox{if} \ \phi(i)\ne m \ , \ \mbox{and} \ D(\phi)(i) = 0 \ \mbox{otherwise}.$$   
Let $\Ga = \Fin^{op}$ be the Segal category.
    We have then the following functor $\mathbf{C}^{op}$ :
$$\De\stackrel{\widetilde{(-)}}{\to}\mathbf{Int}^{op}\stackrel{D^{op}}{\to}\Ga.$$ 
\bre
 This functor is the opposite to the functor $\mathbf{C}$ described by Loday in \cite[p.221]{lo}. Hence, the name.
 \ere 
  
\bde
A {\em symmetric cosimplicial monoid} is a functor $E:\Ga \to  \bMon(\bV).$ 
\ede
\bpr
A symmetric cosimplicial monoid is a cosimplicial monoid $E:\De\to \bMon(\bV)$ (the {\em underlying cosimplicial monoid}) equipped with the following extra structure. 
The components $E(n)$ come equipped with symmetric group actions $S_n\to Aut_{mon}(E(n))$ by monoid automorphisms, such that 
$$t_n...t_1\partial_0 = \partial_{n+1}\qquad\qquad t_i\partial_i = \partial_i,$$
and
$$ \partial_i t_j = \left\{\begin{array}{cc} t_j\partial_i & i>j+1\\ 
t_{i-1}t_i\partial_{i-1} & i=j+1\\
t_{i+1}t_i\partial_{i+1} & i=j\\
t_{j+1}\partial_i & i<j\\ 
\end{array}\right.\qquad \qquad
\sigma_i t_j = \left\{\begin{array}{cc} t_j\sigma_i & i>j+1\\ 
\sigma_{i-1} & i=j+1\\
\sigma_{i+1} & i=j\\
t_{j-1}\sigma_i & i<j\\ 
\end{array}\right.\quad ,$$
where $t_i $ is the transposition $(i, i+1).$
\epr
\bpf The restriction along $\mathbf{C}^{op}$ provide the cosimplicial structure. The rest of the structure is a presentation of $\Ga$ in terms of generators and relations.

\epf

\bre One can also consider braided cosimplicial monoids as functors $E:\mathbf{\Upsilon}\to \bMon(\bV)$ where $\mathbf{\Upsilon} = \mathbf{Vin}_*^{op}$ is the opposite to the category of pointed vines \cite{lavers}. The presentation of braided cosimplicial monoids is similar to the symmetric case. It is sufficient to replace symmetric groups by braid groups.   
\ere

\bde A symmetric (braided) cosimplicial monoid is {\em $n$-commutative} if its underlying cosimplicial monoid is $n$-commutative.
\ede

Let now $\bV$ be a symmetric tensor category over a field of characteristic zero.  
\nl
Dualising the arguments from \cite[sect. 6.4]{lo} we can see that the symmetric group actions on the components $E(n)$ of a symmetric cosimplicial monoid give rise to the so-called {\em Hodge decomposition} of the cohomology
$$H^n(E)\ =\ H^{n,0}(E)\ \op\ ...\ \op\ H^{n,n}(E)\ .$$ 
The decomposition is compatible with the cup-product
$$\cup: H^{m,s}(E)\ot H^{n,t}(E) \to H^{m+n,s+t}(E)\ .$$
In particular, the top components $H^{n,n}(E)$ is the cohomology of the sub-complex of $C^*(E)$ of its anti-symmetric elements.
Recall that an element $a\in E(n)$ is {\em anti-symmetric} if $t_i(a) = -a$ for $i=1,...,n-1$, equivalently $\sigma(a) = {\sgn(\sigma)}a$ for an arbitrary permutation $\sigma\in S_n$.

Here we give a more explicit description of $H^{n,n}(E)$. 
For an element $a\in E(n)$ of a symmetric cosimplicial monoid denote 
$$\hat a_i = t_{i}...t_1\partial_0(a)\in E(n+1)\ .$$ 
For example $\hat a_0 = \partial_0(a)$ and $\hat a_n = \partial_{n+1}(a)$. 
\bde 
We say that an element $a\in E(n)$ of a symmetric cosimplicial monoid is {\em poly-primitive} if 
$$\partial_i(a) = \hat a_{i-1} + \hat a_i ,\qquad i=1,...,n\ .$$ 
\ede
Denote by $P^n(E)\subset E(n)$ the subspace of anti-symmetric poly-primitive elements. 
\bth\lb{pac}
Let $E$ be a symmetric cosimplicial monoid in a symmetric tensor category $\bV$ over a field of characteristic zero.
Then
\begin{enumerate} \item
The  top component of the Hodge decomposition is the subspace of anti-symmetric poly-primitive elements
$$H^{n,n}(E)\ =\ P^n(E)\ .$$
\item If $E$ is  $k$-commutative for $k\ge 2$
then the top degree component of the bracket $\beta^{(k)}$ 
$$P^m(E)\otimes P^n(E)\to P^{m+n-k}(E)$$ 
is zero.
\end{enumerate} 
\eth
\bpf
First note that poly-primitive elements of a symmetric cosimplicial monoid are cocycles of its cochain complex.
Indeed
$$\partial(a) = \partial_0(a) - \partial_1(a) + ... + (-1)^n\partial_n(a) + (-1)^{n+1}\partial_{n+1}(a) = $$
$$\hat a_0 - (\hat a_0+\hat a_1) + ... + (-1)^n(\hat a_{n-1}+\hat a_n) + (-1)^{n+1}\hat a_n = 0\ .$$
In particular, $P^n(E)\subset Z^{n,n}(E)$.
\nl
Now note that $P^n(E)\cap B^n(E) = 0$.
Indeed, observe that $\alt_n\partial=0$, where 
$\alt_n:E(n)\to E(n)$ is the anti-symmetrisation $$\alt_n = \frac{\sum_{\sigma\in S_n}{\sgn}(\sigma)\sigma}{n!}.$$
Since $t_i\partial_i=\partial_i$ we have $\alt_n\partial_i=0$ for $i=1,...,n$. 
Since $t_n...t_1\partial_0 = \partial_{n+1}$ we have that $\alt_n\partial_0= (-1)^n \alt_n t_n...t_1\partial_0 = (-1)^n \alt_n \partial_{n+1}$, which  gives $\alt_n\partial=0$.
Since  $a = \alt_n(a)$ for any $a\in P^n(E)$, writing $a=\partial(b)$ for $a\in P^n(E)\cap B^n(E)$, we get $a = \alt_n(a) = \alt_n\partial(b)=0$. 
\nl
Finally the anti-symmetrisation of a cocycle is poly-primitive. That follows from the formula
$$(\partial_0-\partial_1+t_1\partial_0)\sum_{\sigma\in S_n}{\sgn}(\sigma)\sigma = \left(\sum_{\sigma\in S_{n+1}, \sigma^{-1}(1)<\sigma^{-1}(2)}{\sgn}(\sigma)\sigma\right)\sum_{i=0}^{n+1}(-1)^i\partial_i\ .$$
For example,
$$(\partial_0-\partial_1+t_1\partial_0)(1-t_1) = (1 + t_2t_1-t_2)(\partial_0-\partial_1+\partial_2-\partial_3)\ .$$
Thus the anti-symmetrisation induces a map $Z^n(E)\to P^n(E)$ giving an identification $P^n(E) = H^{n,n}(E)$. 

For a proof of the second part of the Theorem observe that for a $k$-commutative $E$ we have
$\alt_{m+n-k}\beta^{(k)}(a, b) = 0, \quad a\in P^m(E),\ b\in P^n(E) . $ 
Here are  some low dimensional examples of this behavior.  
\nl
Let $a,b\in P^2$. Then $t_1(ab) = t_1(a)t_1(b) = ab$ and so $\alt_2(ab) = 0$. Thus 
$$\alt_2\bl a, b\br = \alt_2[a,b] = \alt_2(ab-ba) = 0\ .$$
\nl
For $a\in P^2,\ b\in P^3$ the 2-bracket \eqref{23b} is the commutator
$$\bl a, b\br = [\hat a_0+\hat a_1+\hat a_2,b]\ .$$
Since $t_2(\hat a_0b) = \hat a_0b$ we have $\alt_3[\hat a_0,b]=0$. Similarly for the other terms
$$\alt_3\bl a, b\br = \alt_3[\hat a_0+\hat a_1+\hat a_2,b] = \alt_3[\hat a_0,b]+\alt_3[\hat a_1,b]+\alt_3[\hat a_2,b] = 0\ .$$
Let $a,b\in P^3$. The 2-bracket \eqref{33b} takes the form
$$\bl a, b\br = \hat a_1\hat b_0 + \hat a_2\hat b_0 + \hat a_3\hat b_0 + \hat a_3\hat b_1 + \hat a_3\hat b_2 - \hat a_0\hat b_2 - \hat a_0\hat b_3 - \hat a_1\hat b_2 - \hat a_1\hat b_3 + $$
$$+ \hat b_1\hat a_0 + \hat b_2\hat a_0 + \hat b_3\hat a_0 + \hat b_3\hat a_1 + \hat b_3\hat a_2 - \hat b_0\hat a_2 - \hat b_0\hat a_3 - \hat b_1\hat a_2 - \hat b_1\hat a_3\ .$$
Since $t_{24}(\hat a_1\hat b_0) = \hat a_1\hat b_0$ we have $\alt_4(\hat a_1\hat b_0) = 0$. Similarly for the other terms. 

In general, the $k$-bracket $ \beta^{(k)}(a, b)$ can be written as a sum of terms $\hat a_I\hat b_J$ (or $\hat b_J\hat a_I$). 
There is always $i$ and $j$ such that $t_{ij}(\hat a_I) = - \hat a_I$ and $t_{ij}(\hat b_J) = - \hat b_J$. Thus $t_i(\hat a_I\hat b_J) = \hat a_I\hat b_J$ and $\alt_{m+n-k}(\hat a_I\hat b_J) = 0.$

\epf

%\bex
%To see that the anti-symmetrisation turns a 2-cocycle into an element of $P^2(E)$ observe that 
%$$(\partial_0-\partial_1+t_1\partial_0)(1-t_1) = (1 + t_2t_1-t_2)(\partial_0-\partial_1+\partial_2-\partial_3)\ .$$
%\eex

\begin{remark}
Part 2 of Theorem \ref{pac} states that in characteristic $0$ the higher brackets are always trivial on the top component of the Hodge decomposition.  It will be useful for calculations in section \ref{liee}. It does not mean, in general, that the higher bracket is identically zero because it can be nontrivial on other Hodge components.
\end{remark}

%\bpr\lb{2bt}
%Let $E$ be a symmetric 2-commutative cosimplicial monoid in characteristic zero. 
%Then
%$$alt_{m+n-2}\bl a, b\br = 0 \qquad\qquad a\in P^m(E),\ b\in P^n(E)\ .$$
%\epr
%\bpf
%
%The 2-bracket $\bl a, b\br$ can be written as a sum of terms $\hat a_I\hat b_J$ (or $\hat b_J\hat a_I$). 
%
%There is always $i$ and $j$ such that $t_{ij}(\hat a_I) = - \hat a_I$ and $t_{ij}(\hat b_J) = - \hat b_J$. Thus $t_i(\hat a_I\hat b_J) = \hat a_I\hat b_J$ and $alt_{m+n-2}(\hat a_I\hat b_J) = 0$. 
%\epf

%%%%%%%%%%%%%%%%
\section{Deformation cohomology of tensor categories}

For a tensor functor $F:\C\to\D$ define its $n$-th tensor power by
$$F^{\ot n}:\C\times ...\times\C\to \D,\qquad F^{\ot n}(X_1,...,X_n) = F(X_1 \ot (X_2\ot( ...(X_{n-1} \ot X_{n})...))$$
For $n=0$ denote
$$F^{\otimes 0}:\Vect_k \to\D,\qquad F^{\otimes 0}(V) = V\otimes I $$
where $I\in\D$ is the unit object.

\subsection{Deformation complex of a tensor functor}\lb{dctf}

Following \cite{da,da0} consider the structure of a cosimplicial complex on the collection $E(F)(n) = End(F^{\otimes n}), \ n\ge 0$ of endomorphism algebras of tensor powers of a monoidal functor $F$.
\nl
More precisely the image of the coface map
$${\partial}_{i} : End(F^{\otimes n}) \rightarrow End(F^{\otimes n+1}) \qquad i = 0,...,n+1$$
of an endomorphism $a \in End(F^{\otimes n})$ has the following specialisation on objects $X_{1},...,X_{n+1}\in\C$:
$${\partial}_{i}(a )_{X_{1},...,X_{n+1}} = \left\{
\begin{array}{lcc}
\phi(1_{F(X_{1})} \otimes a_{X_{2},...,X_{n+1}})\phi^{-1} & , & i = 0 \\
F(\alpha_i)^{-1}(a_{X_{1},...,X_{i} \otimes X_{i+1},...,X_{n+1}})F(\alpha_i)& , & 1 \leq i \leq n \\
\psi(a_{X_{1},...,X_{n}} \otimes 1_{F(X_{n+1})})\psi^{-1} & , & i = n+1
\end{array}
\right.$$
Here $\phi$ is the tensor structure constraint  
$$F(X_1) \ot F(X_2\ot( ...(X_{n} \ot X_{n+1})...)\ \to\ F(X_1 \ot (X_2\ot( ...(X_{n} \ot X_{n+1})...)\ ;$$
$\alpha_i$ (for $1\leq i\leq n$) is the unique composition of associativity constraints  
$$X_1 \ot (X_2\ot( ...(X_{n} \ot X_{n+1})...)\ \to\  X_1 \ot (X_2\ot( ... \otimes (X_{i} \otimes X_{i+1})\ot(...(X_{n} \ot X_{n+1})...)\ ;$$ 
and $\phi$ is the unique composition of associativity and tensor structure constraints 
$$F(X_1\ot (X_2\ot( ...(X_{n-1} \ot X_{n+1})...)\ \to\  F(X_1 \ot (X_2\ot( ...(X_{n-1} \ot X_{n})...)\ot F(X_{n+1})\ .$$
\nl
The specialisation of the image of the codegeneration map
$${\sigma}_{i} : End(F^{\otimes n}) \rightarrow End(F^{\otimes n+1}) \qquad i = 0,...,n-1$$
is
$${\sigma}_{i}(a)_{X_{1},...,X_{n-1}} = {a}_{X_{1},...,X_{i},I,X_{i+1},...,X_{n-1}}.$$
The zero component of this complex as the  endomorphism algebra $End_\D(I)$ of the unit object $I$ of the category $\D$, which can be regarded as the endomorphism algebra of the functor $F^{\otimes 0}$.
\nl
The coface maps
$${\partial}_{i}:End_\D(I) \rightarrow End(F),\qquad i=0,1$$
have the form
\beq\lb{dz}{\partial}_{0}(a)_X = \rho_{F(X)} (a\otimes 1_{F(X)}){\rho}_{F(X)}^{-1}, \qquad {\partial}_{1}(a)_X = \lambda_{F(X)} (1_{F(X)}\otimes a){\lambda_{F(X)}}^{-1}\ ,
\eeq
here $\rho_{F(X)}:I\ot {F(X)}\to {F(X)}$ and $\lambda_{F(X)}:{F(X)}\ot I\to {F(X)}$ are the structural isomorphisms of the unit object $I$.

The components $ End(F^{\otimes n})$ of the cosimplicial object $E(F)$ are monoids under the composition.
It is straightforward to verify the cosimplicial maps ${\sigma}_i$ and ${\partial}_j$ are homomorphisms of monoids.
Thus we have the following.
\bpr
The maps ${\sigma}_i$ and ${\partial}_j$ make $E(F)$ a cosimplicial monoid.
\epr

\begin{defin} The (normalised) total cochain complex $(\EE^*(F),\partial) = Tot_{\delta}(E(F))$ is called  the {\em (normalised) deformation complex of the tensor functor} $F$.
Its cohomology $H^*(F)$  is  the {\em deformation cohomology of the tensor functor} $F$.
\end{defin}

\bex
The space of 1-cocycles $Z^1(F)$ coincides with the space
$$Der(F) = \{a\in End(F)|\  F_{X,Y}^{-1}a_{X\ot Y}F_{X,Y} = 1_{F(X)}\ot a_Y+ a_X\ot 1_{F(Y)},\quad X,Y\in \C\}$$
of {\em derivations} (or {\em primitive} endomorphisms) of $F$.
\nl
The subspace of 1-coboundaries $B^1(F)\subset Z^1(F)$ corresponds to the subspace $Der_{inn}(F)$ of {\em inner derivations} of $F$.
The first cohomology $H^1(F)$ is the space $OutDer(F) = Der(F)/Der_{inn}(F)$ of {\em outer derivations} of $F$.
\eex

\bth
The cosimplicial monoid $E(F)$  is 1-commutative.
\eth
\bpf
By lemma \ref{lno} we need to show that the images $E(\tau_{n,m})(a), E(\pi_{n,m})(b)$ commute for any $a\in E(F)(n)$ and $b\in E(F)(m)$.
By example \ref{elno} 
$$E(\tau_{m,n})(a) = \partial^{n+m}_{n+m-1}...\partial^{n+2}_{n+1}\partial^{n+1}_n(a) = a\ot1_m$$
and 
$$E(\pi_{m,n})(a) = \partial^{n-1}_{m+n-1}...\partial^1_{m+1}\partial^0_m(b) = 1_n\ot b\ .$$
Clearly $a\ot1_m$ commutes with $1_n\ot b$ by the naturality of the tensor product.
\epf

%We call the cochain complex of the cosimplicial complex $E^*(F) = End(F^{\otimes *})$ the {\em deformation complex} of the tensor functor $F$.
The corollary \ref{coraction} implies the following.
\bco
The deformation complex $\EE^*(F)$ of a tensor functor $F$ is an $E_2$-algebra.
\eco

The $\cup$-product on the deformation complex $\EE^*(F)$ takes the form
$$(a\cup b)_{X_1,...,X_m,X_{m+1},...,X_{m+n}} = (-1)^{(m-1)(n-1)}\phi(a_{X_1,...,X_m}\ot b_{X_{m+1},...,X_{m+n}})\phi^{-1}\ ,\qquad a\in \EE^m(F), b\in \EE^n(F) \ .$$
Here $\phi= F_{X_1\ot ... \ot X_{m},X_{m+1} \ot ...\ot X_{m+n}}$ is the %unique composition of 
coherence isomorphism
$$F(X_1 \ot  ... \ot X_{m})\ot F(X_{m+1} \ot ... \ot X_{m+n})\to  F(X_1 \ot  ... \ot X_{m+n})\ .$$
%$$F(X_1 \ot (X_2\ot( ...(X_{m-1} \ot X_{m})...))\ot F(X_{m+1} \ot (X_{m+2}\ot( ...(X_{m+n-1} \ot X_{m+n})...))\to $$
%$$\to F(X_1 \ot (X_2\ot( ...(X_{m+n-1} \ot X_{m+n})...))$$
The $\cup$-product induces an associative multiplication on the cohomology
$$\cup:H^m(F)\ot H^n(F) \to H^{m+n}(F)\ .$$
The induced $\cup$-product on cohomology is super-commutative
$$b\cup a = (-1)^{|a||b|}a\cup b\ .$$
The Steenrod $\cup_1$-product is equal to 
%Commutativity homotopy for the $\cup$-product can be chosen as
$$a\cup_1 b = a\circ b =  \sum_{i=1}^m(-1)^{(n-1)i}a\circ_ib\ ,$$
where 
$$(a\circ_ib)_{X_1,...,X_m,X_{m+1},...,X_{m+n-1}} = $$
$$= (1_{F(X_1\ot...\ot X_{i-1})}\ot b_{X_{i},...,X_{i+m-1}}\ot 1_{F(X_{i+m}\ot...\ot X_{m+n})})a_{X_1,...,X_{i-1},X_{i}\ot...\ot X_{i+m-1},X_{i+m},...,X_{m+n-1}}\ .$$
Thus according to the formula (\ref{ger}) the bracket operation on chain level is given by:
$$\{a,b\} =  a\circ b - (-1)^{(n-1)(m-1)}b\circ a\ ,\qquad a\in E^m(F), b\in E^n(F) \ .$$
which induces the 1-bracket on the cohomology
$$\{\ ,\ \}:H^m(F)\ot H^n(F) \to H^{m+n-1}(F)\ .$$

\bex
The bracket $\{ ,\}$ on $\EE^1(F)$ coincides with the commutator  
$$\{a,b\} =  ab - ba\ .$$
It induces the Lie algebra structure on the space 
$$Z^1(F)=Der(F) = \{\alpha\in End(F)|\ \alpha_{X\ot Y} = 1_X\ot \alpha_Y + \alpha_X\ot 1_Y\}$$ 
of {\em tensor derivations} of a tensor functor $F$.
The subspace 
$$B^1(F)=InnDer(F) = \{\alpha_X = a1_X - 1_Xa|\ a\in End(I)\}$$ 
of {\em inner derivations} is a Lie ideal in $Der(F)$ and the cohomology $H^1(F)=OutDer(F)$ is a Lie algebra. 
\eex

Let now $F:\C\to\D$ and $G:\D\to\K$ be tensor functors. The composition of natural endomorphisms defines a pairing 
of cosimplicial monoids
\beq\lb{pari}E(G)(n)\ot E(F)(n)\to E(G\circ F)(n)\ .\eeq
The pairing gives rise to the mixing cup product on the cohomology
$$\cup:H^m(G)\ot H^n(F) \to H^{m+n}(G\circ F)\ .$$
The associativity of composition implies that the pairing \eqref{pari} is determined by the homomorphisms of cosimplicial monoids
\beq\lb{cihc}E(G),\ E(F)\to E(G\circ F)\ .\eeq
As the result the mixing cup product is also determined by the ring homomorphisms 
\beq\lb{cih}H^*(G),\ H^*(F) \to H^*(G\circ F)\ .\eeq
The homomorphisms of cosimplicial monoids \eqref{cihc} make the homomorphisms \eqref{cih} compatible with the brackets, i.e. homomorphisms of Gerstenhaber algebras. 
Moreover, since the pairing \eqref{pari} implies that the images of \eqref{cihc} commute, we have 
\beq\lb{bti}\{H^*(G),\ H^*(F)\} = 0\eeq
in $H^*(G\circ F)$. 

In particular, when $F=G=Id$   the bracket operation on deformation complex of an identity functor is cohomologically trivial. We will see in the next section that in this case we have a secondary bracket which can be nontrivial.

\subsection{Deformation complex of a tensor category}\lb{dctc}

\begin{defin} 
The {deformation complex  $\EE^*(\C)$ of the tensor category} $\C$  is the deformation complex of the identity functor $Id_\C:\C\to\C.$ \end{defin} 
The cosimplicial complex $E(Id_\C) = E(\C)$  has a higher amount of commutativity.
%We denote it $E^*(\C)$ for short and call it the {\em deformation complex of the tensor category} $\C$.
\bth
The cosimplicial complex $E(\C) $ of a tensor category $\C$ is 2-commutative.
\eth
\bpf
By lemma \ref{lnt} it is sufficient to show  that the images $E(\tau)(a), E(\pi)(b)$ commute for any maps $\tau = \tau_{m,n}^i$ and $\pi = \pi_{m,n}^i$ and any $a\in E(\C)(n)$ and $b\in E(\C)(m).$
By example \ref{elnt} 
$$E(\tau_{m,n}^i)(a) = \partial^{i+m-1}_{n+m-2}...\partial^{i+2}_{n+1}\partial^{i+1}_n(a)$$
and 
$$E(\pi_{m,n}^i)(b) = \partial^{n+m-1}_{n+m-2}...\partial^{i+m+2}_{n+i+1}\partial^{i+m+1}_{n+i}\partial^{i-1}_{n+i-1}...\partial^1_{n+1}\partial^0_n(b) = 1_{i}\ot b\ot 1_{n-i-2}\ .$$
By naturality of $a$ the evaluation 
$$(\partial^{i+m-1}_{n+m-2}...\partial^{i+2}_{n+1}\partial^{i+1}_n(a))_{X_1,...,X_{m+n-1}} =  a_{X_1,...,X_i,X_{i+1}\ot...\ot X_{i+m},X_{i+m+1},...,X_{m+n-1}}$$ 
commutes with the evaluation 
$$(1_{i}\ot b\ot 1_{n-i-2})_{X_1,...,X_{m+n-1}} = 1_{X_1}\ot...\ot 1_{X_{i}}\ot b_{X_{i+1},...,X_{i+m}}\ot 1_{X_{i+m+1}}\ot...\ot 1_{X_{m+n-1}}\ .$$
\epf

Now the corollary \ref{coraction} implies the following.
\bco
The deformation complex $\EE^*(\C)$ of a tensor category $\C$ is an $E_3$-algebra.
\eco

\bex
The degree 2 bracket $\bl-,-\br:Z^2(\C)\ot Z^1(\C)\to Z^1(\C)$ is zero.
The degree 2 bracket $\bl-,-\br:Z^2(\C)\ot Z^2(\C)\to Z^2(\C)$ is the opposite to the commutator with respect to the product in $\EE^2(\C)$
$$\bl a,b\br = ba-ab\ .$$
\eex

\subsection{Deformation complex of a symmetric functor}\lb{dcsf}

Let $F:\C\to\D$ be a tensor functor.
Assume that the category $\C$ is braided. 
For $i=1,...,n-1$ define a monoid automorphism $t_i:End(F^{\otimes n})\to End(F^{\otimes n})$ by assigning to $a\in End(F^{\otimes n})$ the composite
$$\xymatrix{
F(X_1\ot...\ot X_n) \ar[d]_{F(1c_{X_{i+1},X_i}^{-1}1)} &&& F(X_1\ot...\ot X_n) \\
F(X_1\ot...\ot X_{i+1}\ot X_i\ot... \ot X_n) \ar[rrr]^{a_{X_1,..., X_{i+1}, X_i,... , X_n}} &&& F(X_1\ot...\ot X_{i+1}\ot X_i\ot... \ot X_n) \ar[u]_{F(1c_{X_{i+1},X_i}1)}  
}$$
A standard argument shows that $t_i$ and $t_j$ commute, whenever $|i-j|>1$ and that $t_it_{i+1}t_i = t_{i+1}t_it_{i+1}$. Thus we have an action of the braid group $B_n$ on $End(F^{\otimes n})$.
Moreover, these actions have the following properties with respect to the cosimplicial structure.
\ble
$$t_i\partial_i = \partial_i,\qquad \partial_i t_j = \left\{\begin{array}{cc} t_j\partial_i & i>j+1\\ 
t_{i-1}t_i\partial_{i-1} & i=j+1\\
t_{i+1}t_i\partial_{i+1} & i=j\\
t_{j+1}\partial_i & i<j\\ 
\end{array}\right.\qquad 
\sigma_i t_j = \left\{\begin{array}{cc} t_j\sigma_i & i>j+1\\ 
\sigma_{i-1} & i=j+1\\
\sigma_{i+1} & i=j\\
t_{j-1}\sigma_i & i<j\\ 
\end{array}\right.$$
\ele
\bpf
Follows directly from the definition. For example, the identity $t_i\partial_i = \partial_i$ is implied by the naturality of natural transformations in $End(F^{\otimes n})$. The identity $\partial_2 t_1 = t_1 t_2 \partial_1$ is a consequence of one of the hexagon axions for the braiding:
$$(\partial_2 t_1)(a)_{X_1,X_2,X_3} = t_1(a)_{X_1,X_2\ot X_3} = F(c_{X_2\ot X_3,X_1})a_{X_2\ot X_3,X_1}F(c_{X_2\ot X_3,X_1})^{-1} = $$
$$F(c_{X_2,X_1}\ot 1)F(1\ot  c_{X_3,X_1})a_{X_2\ot X_3,X_1}F(1\ot  c_{X_3,X_1})^{-1}F(c_{X_2,X_1}\ot 1)^{-1} = $$
$$F(c_{X_2,X_1}\ot 1)(t_2\partial_1)(a)_{X_2,X_1, X_3}F(c_{X_2,X_1}\ot 1)^{-1} = (t_1 t_2 \partial_1)(a)_{X_1,X_2,X_3}\quad .$$
The identity $\sigma_1 t_1 = \sigma_2$ follows from the unit normalisation condition for the braiding:
$$(\sigma_1 t_1)(a)_X = t_1(a)_{I,X} = F(c_{X,I})a_{X,I}F(c_{X,I})^{-1} = a_{X,I} = \sigma_2(a)_X\ .$$
\epf

Let $F:\C\to\D$ be a tensor functor into a braided category $\D$. 
For $i=1,...,n-1$ define a monoid automorphism $t_i:End(F^{\otimes n})\to End(F^{\otimes n})$ by assigning  $a\in End(F^{\otimes n})$ to the composite
$$\xymatrix{
F(X_1\ot...\ot X_n) \ar[d] &&& F(X_1\ot...\ot X_n) \\
F(X_1)\ot...\ot F(X_n) \ar[d]_{1\ot c_{F(X_{i+1}),F(X_i)}^{-1}1} &&& F(X_1)\ot...\ot F(X_n) \ar[u] \\
F(X_1)\ot...\ot F(X_{i+1})\ot F(X_i)\ot... \ot F(X_n) \ar[d] &&& F(X_1)\ot...\ot F(X_{i+1})\ot F(X_i)\ot... \ot F(X_n) \ar[u]_{1\ot c_{F(X_{i+1}),F(X_i)}1}  \ar[u]\\
F(X_1\ot...\ot X_{i+1}\ot X_i\ot... \ot X_n) \ar[rrr]^{a_{X_1,..., X_{i+1}, X_i,... , X_n}} &&& F(X_1\ot...\ot X_{i+1}\ot X_i\ot... \ot X_n) \ar[u] 
}$$
As above the automorphisms $t_i$ satisfy to the defining relations of the braid group $B_n$. 
Among compatibilities between the braid group action and the cosimplicial structure in this case we have the following.
\ble
Let $F:\C\to\D$ be a tensor functor into a braided category $\D$. 
Then the following is true in $End(F^{\otimes n})$
$$t_n...t_1\partial_0 = \partial_{n+1}\quad .$$
\ele
\bpf
This follows from the naturality and the hexagon axioms for braiding:
$$t_n...t_1\partial_0(a)_{X_1,...,X_{n+1}} = $$
$$(1\ot c_{F(X_{n+1}),F(X_1)})... (c_{F(X_{n+1}),F(X_n)}\ot 1)\partial_0(a)_{X_{n+1},X_1,...,X_{n}}(c_{F(X_{n+1}),F(X_n)}\ot 1)^{-1}...(1\ot c_{F(X_{n+1}),F(X_1)})^{-1} =
$$
$$c_{F(X_{n+1}),F(X_1\ot...\ot X_n)}(1\ot a_{X_1,...,X_n}) c_{F(X_{n+1}),F(X_1\ot...\ot X_n)}^{-1} =  a_{X_1,...,X_n}\ot 1 = \partial_{n+1}(a)_{X_1,...,X_{n+1}}\ .$$
\epf

Note that for a braided tensor functor $F:\C\to\D$ between braided categories the two braid group actions on $E(F)(n)$ coincide. Thus we have the following.
\bpr
The cosimplicial monoid $E(F)$ of a symmetric tensor functor $F:\C\to\D$ is a symmetric cosimplicial monoid (in the sense of section \ref{scm}).
\epr

\subsection{The change of scalars}

Here we discuss the change of scalars of a tensor category from the point of view of enriched category theory (see \cite{ke}). 

Let $A$ be a commutative $k$-algebra. 
For a $k$-linear category $\C$ denote by $\C_A$ the category with the same objects as $\C$ and with hom spaces $\C_A(X,Y) = \C(X,Y)\ot_kA$.  
For a $k$-linear functor $F:\C\to \D$ denote by $F_A:\C_A\to\D_A$ the $A$-linear functor $F_A(X) = F(X)$ with the effcct on morphisms $F_{X,Y}\ot 1:\C(X,Y)\ot_kA\to \C(F(X),F(Y))\ot_kA$.

\bpr
The canonical homomorphism
$End(F)\ot_k A\to End(F_A)$ 
is an isomorphism.
\epr
\bpf
Let $\alpha:F_A\to F_A$ be a natural transformation. Let $l:A\to k$ be a $k$-linear map. 
For an object $X\in \C$ consider the composite $\alpha(l)_X=(I\ot l)(\alpha_X)\in \C(X,X)$, 
where $\alpha_X\in \C(X,X)\ot_kA$ is the specialisation of $\alpha$. 
The components $\alpha(l)_X\in \C(X,X)$ are specialisations of natural transformations $\alpha(l):F\to F$. 
Indeed, the naturality condition $(f\ot 1)\alpha_X = \alpha_Y(f\ot 1)$ for a morphism $f:X\to Y$ gives $f \alpha(l)_X = \alpha(l)_Yf$ (assuming that $l(1)$ is invertible). 
\epf

\bco
The canonical homomorphism
$E^*(F)\ot_k A\to E^*(F_A)$ 
is an isomorphism.
\eco

The change of scalars allows us to look at sets of tensor structures as functors $\A lg_k\to \S et$, which we will following \cite{gr} consider as {\em functor of points} of moduli spaces of such structures.
From this point of view deformations of tensor structures are tangent spaces to the corresponding functor of points. 

Denote by $A_2 = A[\ve|\ve^2=0]$ the algebra of dual numbers over $k$.

\noindent The {\em tangent space} $T_xX$ of a functor of points $X:\A lg_k\to \S et$ at a point $x\in X(k)$ is the fibre $X(f)^{-1}(x)$ of the map $X(f):X(A_2)\to X(k)$ corresponding to the homomorphism $f:A_2\to k$ sending $\ve$ to zero.

\noindent Denote by $A_3 = k[\ve|\ve^3=0]$. Denote by $g:A_3\to k$ the homomorphism sending $\ve$ to zero.
\nl
The {\em (first) tangent cone} $Q_xX\subset T_xX$ of a functor of points $X:\A lg_k\to \S et$ at a point $x\in X(k)$ is the image in $T_xX$ of
the fibre $X(g)^{-1}(x)$ under the map $X(A_3)\to X(A_2)$ corresponding to the homomorphism $A_3\to A_2$ sending $\ve$ to $\ve$.

\subsection{Deformation theory of tensor functors}\lb{dttf}

Here we show how 1- and 2-brackets on low dimensional cohomology appear in deformation theory of tensor functors.

Recall that an automorphism $a:F\to F$ of a tensor functor $F:\C\to\D$ is {\em tensor} if 
$$a_{X\ot Y} = a_X\ot a_Y,\qquad X,Y\in \C\ .$$ 
Tensor automorphisms are closed under the composition and form a group $Aut_\ot(F)$.
\bex
Let $c\in Aut_\D(F(I))$ be an automorphism of the identity object of $\D$. Then $\partial_0(c)\partial_1(c)^{-1}$ is a tensor automorphism of $F$, which we call an {\em inner automorphism} (here $\partial_0(c)$ and $\partial_1(c)$ are as in \eqref{dz}). 
Note that inner automorphisms form a normal subgroup $InnAut_\ot(F)$ in $Aut_\ot(F)$. 
We denote the quotient group $Out_\ot(F)$. 
\eex

An endomorphism $a:F\to F$ of a tensor functor is a {\em tensor derivation} if 
$$\alpha_{X\ot Y} = \alpha_X\ot 1 + 1\ot \alpha_Y,\qquad X,Y\in \C\ .$$ 
It is straightforward to see that the commutator of tensor derivations is a tensor derivation. Denote by $Der(F)$ the Lie algebra of tensor derivations of $F$. 
\bex
Let $c\in End_\D(F(I))$ be an endomorphism of the identity object of $\D$. Then $\partial_0(c) -\partial_1(c)$ is a tensor derivation of $F$, which we call an {\em inner derivation}. 
Note that inner derivation form a Lie ideal $InnDer(F)$ in $Der(F)$. 
We denote the quotient Lie algebra $OutDer(F)$. 
\eex

Here for a tensor functor $F:\C\to\D$ we consider the following group valued functor of points $\A lg_k\to \G rp$
$$A\mapsto Aut_\ot(F_A), \ InnAut_\ot(F_A),\ Out_\ot(F_A) $$
which allow us to look at 
$$Aut_\ot(F),\qquad InnAut_\ot(F),\qquad Out_\ot(F)$$
as proalgebraic groups. 
\bpr
The Lie algebras  
$$Z^1(F)=Der(F),\qquad B^1(F)=InnDer(F),\qquad H^1(F)=OutDer(F)$$ are the tangent Lie algebras of the proalgebraic groups 
$$Aut_\ot(F),\qquad InnAut_\ot(F),\qquad Out_\ot(F)$$
correspondingly. 
\nl
The commutator Lie bracket is the degree 1 bracket on $Z^1(F)$. 
\epr
\bpf
A tensor automorphism of $F$ over $A_2=k[\ve|\ve^2=0]$, which descents to the identity under the reduction $A_2\to k$ has the form $1 + \ve\alpha$, with $\alpha$ being a tensor derivation of $F$.  
The standard computation over $A_3=k[\ve|\ve^3=0]$ shows that the group commutator of $1 + \ve\alpha+...$ and $1 + \ve\beta+...$ has the form $1 + \ve^2[\alpha,\beta]$. 
\epf

For a functor $F:\C\to\D$ between tensor categories denote by $Tens(F)$ the set of isomorphism classes of tensor structures on $F$.
The change of scalars makes it a functor of points
$$A\mapsto Tens(F_A)\ .$$

\bpr\lb{ttf}
The second cohomology $H^2(F)$ is the tangent space to the moduli space $Tens(F)$ of tensor structures of $F$.
\nl
The tangent cone to the moduli space $h^2(F)$ is the set of solutions
$$\{\phi,\phi\} = 0\ .$$
\epr
\bpf
Write a modification to the tensor constraint of the identity functor as 
$$1 + \ve\phi + \ve^2\phi^{(2)} +...\ . $$
The tangent equation (the coefficient equation for $\ve$) to the pentagon equation is 
$$\partial_0(\phi) + \partial_2(\phi) - \partial_1(\phi) - \partial_3(\phi) = 0 ,$$
which coincides with the coboundary condition $\partial(\phi) = 0$. 
\nl
The second tangent equation (the coefficient equation for $\ve^2$) to the pentagon equation is 
$$\partial_2(\phi)\partial_0(\phi) - \partial_1(\phi)\partial_3(\phi) = \partial(-\phi^{(2)})\ ,$$
which coincides with $\{\phi,\phi\} = \partial(-2\phi^{(2)})$. 
\epf

\subsection{Deformation theory of tensor categories}\lb{dttc}

Recall from \cite{da4} that a tensor autoequivalence $F:\C\to\C$ is {\em soft} if it is isomorphic as a plain functor to the identity functor $Id_\C$. 
It is straightforward to see (e.g. \cite{da}) that the composition of soft autoequivalences is soft and that a quisi-inverse to a soft autoequivalence is also soft. 
Denote by $Aut^1_\ot(\C)$ the group of isomorphism classes of soft autoequivalences of $\C$. 
The change of scalars makes it a group valued functor of points
$$A\mapsto Aut^1_\ot(\C_A)\ .$$

\bpr
The Lie algebra $(H^2(\C), \bl\ ,\ \br)$ is the tangent Lie algebra of the proalgebraic group $Aut^1_\ot(\C)$.
\epr
\bpf
By the proposition \ref{ttf} the tangent space at the identity of $Aut^1_\ot(\C)$ is $H^2(\C)$.
Note that the group commutator in $Aut^1_\ot(\C)$ of two soft autoequivalences with the tensor constraints
$$1 + \ve\phi + ...\qquad, \qquad 1 + \ve\psi + ...$$
is a soft autoequivalence with the tensor constraint
$1 + \ve^2\bl\phi,\psi\br$, where $\bl\phi,\psi\br$ is the degree 2 bracket on $H^2(\C)$.
\epf

For a $k$-linear category denote by $Tens(\C)$ the set of equivalence classes of tensor structures on $\C$. 
The change of scalars makes it a functor of points
$$A\mapsto Tens(\C_A)\ .$$

\bpr
The third cohomology $H^3(\C)$ is the tangent space to the moduli space $Tens(\C)$.
\nl
Let $k$ be a field of characteristic not 2.
The tangent cone to the moduli space $h^3(\C)$ is the set of solutions
$$\bl\alpha,\alpha\br = 0\ .$$
\epr
\bpf
Write a modification to the associativity constraint as 
$$1 + \ve\alpha + \ve^2\alpha^{(2)} +...\ . $$
The tangent equation to the pentagon equation is 
$$\partial_0(\alpha) + \partial_2(\alpha) + \partial_4(\alpha) - \partial_1(\alpha) - \partial_3(\alpha) = 0\ ,$$
which coincides with the coboundary condition $\partial(\alpha) = 0$. 
Let now 
$$1 + \ve\alpha +...\qquad 1 + \ve\beta +...$$
be two modifications to the associativity constraint. Write a modification to the tensor constraint of the identity functor as 
$1 + \ve\phi + ...\ . $
The tangent equation to the tensor constraint equation is 
$$\partial_0(\phi) + \partial_2(\phi) - \partial_1(\phi) - \partial_3(\phi) = \alpha - \beta\ ,$$
which coincides with the coboundary condition $\partial(\phi) = \alpha - \beta$. 
\nl
The second tangent equation to the pentagon equation is 
$$\partial_2(\alpha)\partial_0(\alpha) + \partial_4(\alpha)\partial_0(\alpha) + \partial_4(\alpha)\partial_2(\alpha) - \partial_1(\alpha)\partial_3(\alpha) = \partial(-\alpha^{(2)})\ ,$$
which coincides with $\bl\alpha,\alpha\br = \partial(-2\alpha^{(2)})$. 
\epf

%The bracket $\bl\ ,\ \br:E^2\ot E^3\to E^3$ has the form
%\beq\lb{sb23}\bl\phi,\alpha\br = (\partial_0(\phi) + \partial_2(\phi))\alpha - \alpha(\partial_1(\phi)+\partial_3(\phi))\ .\eeq

\bpr
%Let $k$ be a field of characteristic not 2.
The obstruction for $\phi\in H^2(C)$ to deform the tensor structure on the identity functor $Id_\C$ compatible with the deformation of the associativity of $\C$ corresponding to $\alpha\in H^3(\C)$ is
$$\bl\phi,\alpha\br = \{\phi,\phi\}\ .$$
\epr
\bpf
Write a modification to the associativity constraint as 
$$1 + \ve\phi + \ve^2\phi^{(2)} + ...\qquad 1 + \ve\alpha + ...\ . $$
The second tangent equation to the tensor constraint condition is 
$$(\partial_0(\phi) + \partial_2(\phi))\alpha + \partial_0(\phi)\partial_2(\phi) =
\alpha(\partial_1(\phi)+\partial_3(\phi)) + \partial_3(\phi)\partial_1(\phi) - \partial(\phi^{(2)})\ ,$$
which coincides with $\bl\phi,\alpha\br = \{\phi,\phi\}+\partial(...)$. 
\epf

%%%%%%%%%%%%%%%%
\section{Examples}\lb{exampl}

\subsection{Modules over bialgebra} 
Here we reproduce the computations from \cite{da} for deformation complexes of the forgetful and the identity functors on the category of modules over a bialgebra.
We then write the 1- and 2-brackets on them explicitly. 

Let $H$ be a bialgebra with coproduct $\Delta :H\rightarrow H\otimes H$ and counit $\varepsilon :H\rightarrow k$. 
Denote by $H\da\Mod$ the tensor category of $H$-modules.

%\subsection{Deformation complex of forgetful functor}

Let $F:H\da\Mod\to \Vect$ be the forgetful functor.

We start by computing the algebras of endomorphisms of tensor powers of $F$ (see \cite{da}).
\ble\lb{etp}
The algebra of endomorphisms $E(F)(n) = End(F^{\otimes n})$ of the $n$-th power of the forgetful functor is isomorphic to the tensor power $H^{\ot n}$ of the bialgebra.
\nl
The isomorphism is exhibited by two mutually inverse maps:
$$H^{\otimes n}\to End(F^{\otimes n}) ,$$
which associates to an element $x\in H^{\otimes n}$ the endomorphism of multiplication by $x$, and
$$End(F^{\otimes n})\to H^{\otimes n} ,\qquad a\mapsto a_{H,...,H} ,$$
which sends an endomorphism to its specialisation on the regular $H$-module $H$.
\ele
\bpf
Clearly the map $H^{\otimes n}\to End(F^{\otimes n}) ,$ defined in the statement of the lemma is a homomorphism of algebras.
All we need to do is to prove that this is an isomorphism.
\nl
Recall a well-known fact that the forgetful functor has the right adjoint
$$\Vect\to H\da\Mod ,\qquad V\mapsto H\otimes V$$
computing the free $H$-module on a vector space $V$.
Similarly the $n$-th power of the forgetful functor considered as a functor from the Deligne's tensor power
$$R_n:\xymatrix{H^{\ot n}\da\Mod = (H\da\Mod)^{\boxtimes n} \ar[r]^(.75){F^{\ot n}} & \Vect}$$
has the right adjoint
$$\Vect\to H^{\ot n}\da\Mod ,\qquad V\mapsto H^{\ot n}\otimes V\ .$$
The algebra of its endomorphisms is
$$End(R_n) = End_{H^{\ot n}}(H^{\ot n}) = (H^{\ot n})^{op}\ .$$
The adjunction identifies the endomorphism algebra $End(F^{\otimes n})$ with the opposite of the endomorphism algebra $End(R_n)$.
Finally it is straightforward to see that the isomorphism
$$End(F^{\otimes n}) \to End(R_n)^{op} = H^{\ot n}$$
is the specialisation on the regular $H$-module $H$.
\epf

\begin{prop}
The cosimplicial complex $E(F)$ 
%complex of endomorphisms $E^*(F) = End(F^{\otimes *})$ 
of the forgetful functor $F:H\da\Mod\to \Vect$ is isomorphic to the bar complex
$H^{\ot *}$ of $H$ with coface maps ${\partial}^{i}_{n}:H^{\ot n-1}\longrightarrow H^{\ot n}$ given by
$${\partial}_i(h_{1}\otimes ...\otimes h_{n}) =
\left\{
\begin{array}{ccc}
1\otimes h_{1}\otimes ...\otimes h_{n}&,& i=0\\
h_{1}\otimes ...\otimes\Delta (h_{i})\otimes ...\otimes h_{n}&,& 1\leq i\leq n\\
h_{1}\otimes ...\otimes h_{n}\otimes 1&,& i=n+1
\end{array}
\right.
$$
and codegeneration
$${\sigma}_i(h_{1}\otimes ...\otimes h_{n+1}) = h_{1}\otimes ...\otimes\varepsilon (h_{i})\otimes ...\otimes h_{n+1}$$
\epr
\bpf
Direct computation of the effect of coface and codegeneration maps on endomorphisms given by multiplication with elements of $H^{\ot n}$ provides the result.
\epf

The deformation complex of the forgetful functor $F:H\da\Mod\to \Vect$ is the {\em co-Hochschild complex} of $H$ \cite[Appendix]{da2}, i.e.
the complex $C^*(H) = (H^{\otimes *},\partial)$ with the differential $\partial:H^{\otimes n}\to H^{\otimes n+1}$ defined by
\begin{equation}\label{tangcoh}
\partial(a) = 1\otimes a +\sum_{i=1}^n(-1)^i(id^{\otimes i-1}\otimes\Delta\otimes id^{\otimes n-i-1})(a) + (-1)^{n+1}(a\otimes 1).
\end{equation}

The cup product on the co-Hochschild complex $C^*(H)$ is:
$$\scup:C^m(H)\otimes C^n(H)\to C^{m+n}(H),\qquad a\scup b = a\ot b.$$
The homotopy for commutativity is 
$$a\circ b = \sum_{i=1}^m(-1)^{(n-1)i}a\circ_ib\ ,$$
where 
$$a\circ_ib = (id^{\ot i-1}\ot\Delta^{(n-1)}\ot id^{\ot m-i})(a)(1^{\ot i-1}\ot b\ot  1^{\ot m-i})\ ,$$
where $\Delta^{(n-1)}:H\to H^{\ot n}$ is the iterated coproduct and $a\in H^{\ot m}$, $b\in H^{\ot n}$.
\nl
The 1-bracket 
$$\{a,b\} =  a\circ b - (-1)^{(n-1)(m-1)}b\circ a\ ,\qquad a\in H^{\ot m}, b\in H^{\ot n} \ .$$

\bex
The first cohomology
$$H^1(F) = \{a\in H|\ \Delta(a) = 1\ot a + a\ot 1\} = Prim(H)$$ coincides with the space of {\em primitive} elements of $H$.
The 1-bracket on $H^1(F)$ is the commutator bracket.
\eex

%\subsection{Deformation complex of identity functor}

The following was proved in \cite{da}. 
\bpr
The cosimplicial complex $E(H\da\Mod)$ %complex of endomorphisms $E^*(H\da\Mod) = End(Id_{H\da\Mod}^{\otimes *})$ of the identity functor of the category $H\da\Mod$ 
is isomorphic to the subcomplex of the bar complex of $H$, which consists of $H$-invariant elements (the subcomplex of centralisers $C_{H^{\otimes n}}(\Delta (H))$ of the images of diagonal embeddings).
\epr
\bpf
The isomorphism from lemma \ref{etp}
$$H^{\otimes n}\to End(F^{\otimes n})\ ,$$
sending an element $x\in H^{\otimes n}$ to the endomorphism of multiplication by $x$, induces an isomorphism 
$$C_{H^{\otimes n}}(\Delta (H))\to End(Id_{H\da\Mod}^{\otimes n})\ .$$
\epf

The cohomology $H^*(H\da\Mod)$ can be computed via the equavariant spectral sequence
\beq\lb{ess}E_2^{p,q} = H^p(H,H^q_{ch}(H))\ \Longrightarrow\ H^{p+q}(H\da\Mod)\eeq
with the second leave occupied by Sweedler's cohomology $H^p(H,H^q_{ch}(H))$ with the coeficients in the co-Hochschild cohomology of $H$ considered with the adjoint $H$-action. 

\subsection{Lie algebras}\lb{liee}

Let $\g$ be a Lie algebra. Let $U(\g)$ be its universal enveloping algebra. Denote by $\Rep(\g) = U(\g)\da\Mod$ the tensor category of $\g$-representations.

The following was shown in \cite{dr}. 
\bpr\lb{cof}
Let the characteristic of the ground field $k$ be zero. Let $F:\Rep(\g)\to\Vect$ be the forgetful functor. 
Then the natural homomorphism
\beq\lb{em}\Lambda^{*}({\frak g}) = \Lambda^{*}(H^{1}(F))\rightarrow H^{*}(F),\eeq
induced by the multiplication in $H^{*}(F)$ is an isomorphism.
\epr

Note that the inverse to \eqref{em} sends a cocycle $x\in Z^n(F)\subset U(\g)^{\ot n}$ to its anti-symmetrisation 
$$\alt_n(x) = \sum_{\sigma\in S_n}(-1)^\sigma \sigma(x)\ .$$
It is a fact independent of the characteristic of $k$ that  the first cohomology coincides with the space of primitive elements of the universal enveloping algebra
$$H^{1}(F) = Prim(U(\g)) = \{x\in U(\g)|\ \Delta(x) = x\ot 1 + 1\ot x\}$$ 
and that $\alt_n(x)\in \Lambda^n(H^{1}(F))$ for $x\in Z^n(F)$. 
Moreover, the map 
$$\xymatrix{Z^n(F)\ \ar[r]^(.4){alt_n} &\ \Lambda^n(H^{1}(F))}$$
is surjective. Indeed, for $x_i\in Prim(U(\g))$ the indecomposable tensor $x_1\ot...\ot x_n$ is a cocycle and its anti-symmetrisation is $x_1\wedge...\wedge x_n$. 
\bre
Note that the forgetful functor $F:\Rep(\g)\to\Vect$ is symmetric and thus, by section \ref{scm}, its cohomology possesses a Hodge decomposition.
The proposition \ref{cof} says that $H^{n}(F) = H^{n,n}(F) = \Lambda^{n}({\frak g})$. 
\ere
\bre
Let $k$ be the field of characteristic $p$. 
Then the $p$-th power of any primitive element is primitive: $x^p\in Prim(U(\g))$ for $x\in Prim(U(\g))$. 
This shows that $Prim(U(\g))$ contains the direct sum of infinitely many copies of $\g$. 
\nl
Moreover for $x\in Prim(U(\g))$ the following is well-defined
$$\partial\left(\frac{x^p}{p}\right) =\ \frac{1}{p}\ \sum_{i=1}^{p-1}\ C^p_i\ x^i\ot x^{p-i}$$
and is a symmetric 2-cocycle, i.e. 
$$\alt_2\left( \partial\left(\frac{x^p}{p}\right) \right) = 0\ .$$
\ere

Recall (from e.g. \cite{dri}) the {\em Schouten} bracket on $\Lambda^{*}({\frak g})$:
$$\{x_{1}\wedge ...\wedge x_{s},y_{1}\wedge ...\wedge y_{t}\} = $$
$$\sum_{i,j}(-1)^{i+j}[x_{i},y_{j}]\wedge x_{1}\wedge ...\wedge\widehat{x_{i}}\wedge ...\wedge x_{s}\wedge y_{1}\wedge ...\wedge\widehat{y_{j}}\wedge ...\wedge y_{t},$$
where $\widehat{z}$ means that $z$ does not occur in the product.

\bre\lb{sbo}
The Schouten bracket $\{x,y_{1}\wedge ...\wedge y_{t}\}$ is the result of the adjoint action of $x$ on $y_{1}\wedge ...\wedge y_{t}$:
$$\{x,y_{1}\wedge ...\wedge y_{t}\} = [x,y_{1}]\wedge ...\wedge y_{t} + y_{1}\wedge[x,y_2]\wedge ...\wedge y_{t} +...+ y_{1}\wedge ...\wedge[x,y_{t}]\ .$$
\ere

\bpr
Let the characteristic of the ground field $k$ be zero.
Then the 1-bracket on $H^{*}(F)= \Lambda^{*}{\frak g}$ coincides with the Schouten bracket.
\epr
\bpf
Follows from the biderivation property of the bracket and the fact that on $H^1(F) = \g$ the bracket is the Lie bracket of $\g$. 
\epf

\bex
The Schouten bracket on $r\in \Lambda^2(\g)$ with itself has the form
$$\{r,r\} = [r_{23},r_{13}] + [r_{23},r_{12}] + [r_{13},r_{12}]\ .$$
Here $r_{23}=1\ot r$ etc.
The Maurer-Cartan equation $\{r,r\}=0$ is known as the {\em classical Yang-Baxter equation} \cite{dr2}. 
\eex

The following was proved in \cite[Appendix]{da2} (see also \cite{del}).
\bpr\lb{cif}
Let the characteristic of the ground field $k$ be zero.
Then the homomorphism 
$$\Lambda^{*}({\frak g})^\g\ \to\ H^{*}(\Rep(\g))$$
induced by the multiplication in $H^{*}(F)$, where $F:\Rep(\g)\to\Vect$ is the forgetful functor,  is an isomorphism.
\epr
\bpf
The spectral sequence \eqref{ess} takes the form
$$E_2^{p,q} = H^p(\g,\Lambda^q(\g))\ \Longrightarrow\ H^{p+q}(\Rep(\g))\ ,$$
where $H^p(\g,\Lambda^q(\g))$ is the cohomology of $\g$ with coefficients in the adjoint module $\Lambda^q(\g)$. 
The $\g$-invariant splitting $\Lambda^q(\g)\to Z^q(F)$ sending $x_1\wedge ...\wedge x_q$  to $\frac{1}{q!}x_1\wedge ...\wedge x_q$, 
guaranties the collapse of the equivariant spectral sequence on the first page.
\epf
\bre\lb{hdi}
Proposition \ref{cif} says that the Hodge decomposition of the cohomology of the identity functor on $\Rep(\g)$ is  $H^{n}(\Rep(\g)) = H^{n,n}(\Rep(\g)) = \Lambda^{n}({\frak g})^\g$. 
\ere

\bco
The cohomology $H^{*}(\Rep(\g))$ coincides with the kernel of the 1-bracket on $H^{*}(F)$, where $F:\Rep(\g)\to\Vect$ is the forgetful functor. 
\eco
\bpf
Since $H^{*}(F)$ is multiplicatively generated by $H^1(F)$ the kernel of the 1-bracket on $H^{*}(F)$ is the common kernel of the derivations $\{x,-\}$ on $H^{*}(F)$ for all $x\in\g=H^1(F)$. According to remark \ref{sbo} this common kernel is nothing but the subspace of $\g$-invariants of $H^{*}(F)$. 
\epf

\bth\lb{2bt}
Let $k$ be a field of characteristic zero.
The 2-bracket on $H^{*}(\Rep(\g))$ is zero.
\eth
\bpf
Follows from the remark \ref{hdi} and Theorem \ref{pac}.
\epf

In finite characteristic the 2-bracket on $H^{*}(\Rep(\g))$ is far from being trivial.
\bex\lb{nte}
Let $k$ be a field of characteristic 3.
Let $\g = \langle x, y, z\rangle$ be the 3-dimensional Heisenberg algebra over $k$:
$$[x,y] = z,\qquad [x,z] = [y,z] = 0\ .$$
The 2-cocycles $x\wedge z,\ y\wedge z$ are $\g$-invariant.
Their 2-bracket
$$\bl x\wedge z, y\wedge z\br = [x\wedge z, y\wedge z] = [x,y]\ \ot\ z^2 + z^2\ \ot\ [x,y] = z\ \ot\ z^2 + z^2\ \ot\ z = \partial\left(\frac{z^3}{3}\right)$$
has a non-trivial cohomology class in $H^2(\Rep(\g))$. 
This computation has an interesting similarity with the example 6.6. of \cite{da3}.
\eex

%%%%%%%%%%%
\appendix \label{LP}

\section{Lattice paths, shuffles and sketches } \label{latticepaths}

\subsection{Lattice paths operad and its filtration}

Here we define the lattice path operad, introduced in \cite{BB} .
 \nl
 Recall that the category $\Cat$ has exactly two closed symmetric monoidal structures: the cartesian structure and the so called funny product structure.
 Funny tensor product $A\boxx B$ of two small categories has the cartesian product of objects sets of $A$ and $B$ as objects, but morphisms  are generated by the expressions $(f,id)$ and $(id,g)$ where  $f:a\to a'$ in $A$ and $g:b\to b'$ in $B.$ We then factorise by relations  
 $$(f,id)\circ(id,g)\circ (id,g') = (f,id)\circ(id,g\circ g') \ \  \mbox{and} \ \ (f',id)\circ(f,id)\circ (id,g) = (f'\circ f,id)\circ (id,g)$$
 and similarly on the other side.
 The result is that in $A\boxx B$ there are  two different morphisms $(f,id)\circ (id,g)$ and $(id,g)\circ (f,id)$ from $(a,b)$ to $(a',b')$ unless one of $f$ or $g$ is the identity. 
 From this definition it is clear that  there is a natural morphism $A\boxx B \to A\times B,$ which identifies $(f,id)\circ (id,g)$ and $(id,g)\circ (f,id).$  
 \begin{remark} It is often easier to understand tensor product through its internal $Hom.$ The internal $Hom$-functor for the product $\boxx$ is given by by the category whose objects are functors from $A$ to $B$ and whose morphisms is the set of all transformations (not necessary natural) from $F$  to $G.$
 \end{remark}

Observe that if $a\in A$ and $b\in B$ are terminal (or weakly terminal) objects then $(a,b)\in A\boxx B$ is, in generally,  only weakly terminal. Similarly for initial objects. 
So,  the tensor product $\boxx$  restricts to the category $\Cat_{*,*}.$ 
  
The lattice paths operad  $\LL$ is a symmetric coloured operad in $\bSet$ with natural numbers as its set of colours  and whose space of operations 
$$\LL(n_1,\ldots, n_k; n)=\Cat_{*,*}(\langle n+1\rangle,\langle n_1 +1\rangle\boxx\cdots\boxx \langle n_k +1\rangle),$$
and the operad substitution maps being induced by tensor and composition in $\Cat_{*,*}.$ 
The underlying category of $\LL$ is $\De$ since by Joyal's duality  
$$\LL(n; m) = \Cat_{*,*}(\langle n+1\rangle,\langle m+1\rangle ) = \De(m,n)\ .$$

Recall that similarly to the cartesian product the funny product  $\boxx$  admits two natural projections:
$$A\stackrel{pr_A}{\longleftarrow} A\boxx B \stackrel{pr_B}{\longrightarrow}B. $$
Hence, for any two $1\le i<j\le k$ there  is a projection
$$\pi_{ij}:\langle n_1 +1\rangle\boxx\cdots\boxx \langle n_k +1\rangle \longrightarrow \langle n_i +1\rangle\boxx \langle n_j +1\rangle$$ 
   from a hypercube     to a  square.
Let   $\psi$   be a lattice path and  let $\psi_{ij}$ be the composite
$$\langle n+1\rangle \stackrel{\psi}{\to} \langle n_1 +1\rangle\boxx\cdots\boxx \langle n_k +1\rangle \stackrel{\pi_{ij}}{\longrightarrow} \langle n_i +1\rangle\boxx \langle n_j +1\rangle.$$ 
 
 \begin{defin} A lattice path $\psi: \langle n+1\rangle {\to} \langle p +1\rangle\boxx \langle q +1\rangle$ has $c$ corners  if the morphism in $\langle p +1\rangle\boxx \langle q +1\rangle$ given by the composite 
   $$\langle1\rangle \to \langle n+1\rangle \stackrel{\psi}{\to} \langle p +1\rangle\boxx \langle q +1\rangle$$ 
   has exactly $c$ generators of the form $(f,id)\circ(id,g)$ or $(id,f)\circ(g,id)$  in which both $f$ and $g$ are not equal to the identities.   
 \end{defin} 
 \begin{remark} It is easy to see that this number $c$ does not depend on the presentation of the composite above. This number is exactly the number of changes of directions  of the lattice path. 
 
 \end{remark}

   \begin{defin} The complexity index of the lattice path $\psi$ is 
   $$\cc(\psi) = \max_{i<j}\cc_{ij}(\psi),$$ 
   where $\cc_{ij}(\psi)$   is number of corners of the lattice path $\psi_{ij}.$ 

    %\ {\color{red} AD} \  need to define corners formally
   \end{defin}

\begin{defin}[\cite{BB}] The operad
$\LL^{(n)}$ is the suboperad of  $\LL$ which consists of the lattice paths of complexity less or equal to $n.$ 
\end{defin}

Thus the operad $\LL$ has an exhaustive  filtration by suboperads 
$$\De = \LL^{(0)} \subset \LL^{(1)} \subset \ldots \LL^{(n)}\subset  \ldots  \subset \LL.$$

We will also need  the following description of the lattice paths introduced in \cite{BBM}.  A lattice path from $\LL^{(n)}(n_1,\ldots,n_k;m)$:
 $$\psi: \langle m+1\rangle \stackrel{}{\to} \langle n_1 +1\rangle\boxx\ldots\boxx\langle n_k+1\rangle$$
 has its 'shape' in $\LL^{(n)}(n_1,\ldots,n_k;0)$ as the result of the composition 
 $$\psi: \langle 1\rangle \to     \langle m+1\rangle \stackrel{}{\to} \langle n_1 +1\rangle\boxx\ldots\boxx\langle n_k+1\rangle.$$ 
 To reconstruct this path back it is enough to add positive integer labels (called {\it multiplicity} ) to each vertex of the lattice.  We add the label $p>1$ to such a vertex $v$ if the full preimage $\psi^{-1}(id_v)$ contains exactly $p-1$ generators $\{\bar{0},\ldots,\bar{m}\}$ or, equivalently, exactly $p$ identity morphisms. We assign a label $1$ if this preimage contains only aone identity and $0$ if such a preimage is empty. 
 Informally we think about the labelling as the time the path `spends'  at $v$ along its way from minimum point to the maximum. 
 \begin{remark}
 According to this definition the label of the endpoints of the path is greater or equal to $1.$ This is different from the agreement adopted in \cite{BBM}. Their labelling is obtained from ours by subtracting $1$ from the endpoints labels. This is because in \cite{BBM} it was convenient to take into account the number of internal points along the path. 
 
 Yet another small difference is that we label all points on the lattice, not only the points along the path. Of course, if $v$ is not on the path its label is $0$ so the information is exactly the same. But we prefer to label all points  because it is a little bit  easier to see the action of  simplicial operators on a lattice path  this way.  
  
 \end{remark}      

\subsection{First movement order and paths of complexity one}

In this section we investigate a natural map from the set of lattice paths to the symmetric groups, which we call the first movement order. %We refer the reader to the Appendix \ref{latticepaths} for definitions of lattice paths and related notions. 

We identify an element of the symmetric group $\Sigma_k$ with a linear order on the set $\{1,\ldots,k\}.$ A lattice path
$$\psi:\langle n+1\rangle \to \langle n_1 +1\rangle\boxx\ldots\boxx\langle n_k+1\rangle $$ 
determines such a linear order by a simple rule: an element $i\in \{1,\ldots,k\}$ is less then $j\in \{1,\ldots,k\}$ if the move in the direction $i$ appears in $\psi$ before a move in the direction $j.$ 
\nl
More formally we can define this as a map 
$$\varpi:\LL(n_ 1,\ldots,n_k;n)\to \LL(0,\ldots,0;0).$$
Indeed, the set $\LL(0,\ldots,0;0)$ is the set of nondecreasing paths on a unit box from $(0,\ldots,0) \to (1,\ldots,1)$ which go on the edges. Such a path is completely determined by the choice of linear order on the set  of directions $\{1,\ldots,k\}.$ 

\begin{example}
A lattice path $\psi:\langle 1 \rangle \to \langle 1 \rangle \boxx \langle 1 \rangle \boxx \langle 1 \rangle $ with $\varpi(\psi) = (321)$  
$$\xygraph{
!{<0cm,0cm>;<.7cm,0cm>:<0cm,.7cm>::}
!{(0,2)}*{\circ} ="lu"  !{(2,2)}*{\circ}="ru"
!{(-1,1)}*{\circ} ="ul"   !{(1,1)}*{\circ}="ur"
 !{(0,0)}*{\circ} ="ld"  !{(2,0)}*{\circ}="rd"
!{(-1,-1)}*{\circ} ="dl"   !{(1,-1)}*{\circ}="dr"
%%%%%%%
"dl":"ld"     "ld":"lu"  "lu":"ru"   
"ul":@{..}"ur"  "ul":@{..}"lu"  "ul":@{..}"dl"  "dl":@{..}"dr"  "dr":@{..}"ur"  "dr":@{..}"rd"  "rd":@{..}"ru"  "ur":@{..}"ru"  "ld":@{..}"rd"
}$$
Here the path is the sequence of vertices $(0,0,0),\ (0,0,1),\ (0,1,1),\ (1,1,1)$ of the cube.
\end{example}

It is straightforward that $\LL(0,\ldots,0;0)$ is the single colour suboperad of $\LL$ isomorphic to $\A ss.$ 
\nl
Formally the map $\varpi$ is induced by precomposition of the unique morphism of intervals $\langle 1 \rangle \to \langle n+1 \rangle $ and composition with the product
$\langle n_1+1\rangle \boxx \ldots \boxx \langle n_k+1 \rangle \to \langle 1\rangle \boxx \ldots \boxx \langle 1 \rangle,$ where 
$ \langle n_i+1\rangle \to  \langle 1 \rangle$ is the interval map, which sends any $0< a \le n_i+1$ to $1$  (or, in terms of generators, it sends $\bar{0}$ to $\bar{0}$ and any other 
generators to the identity of $1$.)

The following obvious lemma is useful.
\begin{lem}\label{xivalue} The value of the first movement order map $\varpi$ on the composite 
$$\langle n+1\rangle \to \langle n'+1\rangle \stackrel{\psi}{\to} \langle n'_1 +1\rangle\boxx\ldots\boxx\langle n'_k+1\rangle \stackrel{\psi_1\boxx\ldots\boxx\psi_k}{\longrightarrow} \langle n_1 +1\rangle\boxx\ldots\boxx\langle n_k+1\rangle $$ 
is equal to $\varpi(\psi).$
\end{lem}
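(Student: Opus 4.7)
The plan is to unwind the formal definition of $\varpi$ and reduce the claim to two naturality observations: one for the precomposition by $\langle n+1\rangle\to\langle n'+1\rangle$ and one for the postcomposition by $\psi_1\boxx\cdots\boxx\psi_k$. Write $\iota_m:\langle 1\rangle\to\langle m+1\rangle$ for the unique interval map, and $r_j:\langle j+1\rangle\to\langle 1\rangle$ for the reduction sending $0\mapsto 0$ and every positive object to $1$. By the formal definition quoted in the paper, for any $\phi\in\LL(m_1,\ldots,m_k;m)$,
\[
\varpi(\phi)\;=\;(r_{m_1}\boxx\cdots\boxx r_{m_k})\circ\phi\circ\iota_m.
\]
Denoting by $\alpha$ the morphism $\langle n+1\rangle\to\langle n'+1\rangle$ and by $\Phi=(\psi_1\boxx\cdots\boxx\psi_k)\circ\psi\circ\alpha$ the composite in the statement, I would simply expand
\[
\varpi(\Phi)\;=\;(r_{n_1}\boxx\cdots\boxx r_{n_k})\circ(\psi_1\boxx\cdots\boxx\psi_k)\circ\psi\circ\alpha\circ\iota_n
\]
and verify termwise that this coincides with $\varpi(\psi)=(r_{n'_1}\boxx\cdots\boxx r_{n'_k})\circ\psi\circ\iota_{n'}$.

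For the precomposition, I would argue that $\alpha\circ\iota_n$ is an interval map $\langle 1\rangle\to\langle n'+1\rangle$ sending $0$ to the initial object $0$ and $1$ to the terminal object $n'+1$; since $\langle 1\rangle$ has only two objects with these designations, there is a unique such interval map, namely $\iota_{n'}$. This gives $\alpha\circ\iota_n=\iota_{n'}$. For the postcomposition, I would use functoriality of $\boxx$ to rewrite
\[
(r_{n_1}\boxx\cdots\boxx r_{n_k})\circ(\psi_1\boxx\cdots\boxx\psi_k)\;=\;(r_{n_1}\circ\psi_1)\boxx\cdots\boxx(r_{n_k}\circ\psi_k),
\]
so the claim reduces to showing $r_{n_i}\circ\psi_i=r_{n'_i}$ for each $i$. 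Both sides are interval maps $\langle n'_i+1\rangle\to\langle 1\rangle$ preserving initial and terminal objects; once one verifies they agree on the intermediate objects (i.e. that the composite sends a positive object to $1$), they coincide. Assembling the two steps then gives $\varpi(\Phi)=\varpi(\psi)$.

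The main obstacle is the second identification $r_{n_i}\circ\psi_i=r_{n'_i}$: an arbitrary interval map $\psi_i$ only satisfies this when it sends every positive object to a positive one, and in full generality a $\psi_i$ that collapses an initial segment onto $0$ will produce a different reduction. The argument therefore relies implicitly on the class of unary substitutions relevant to the operad $\LL^{(1)}$, where the block structure of complexity-one paths ensures that first-movement order is robust under the substitutions considered; this is the subtle point that any careful write-up should spell out.
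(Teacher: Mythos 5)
The paper gives no proof of this lemma at all (it is announced as ``obvious''), so unwinding the formal definition of $\varpi$, as you do, is the only reasonable route. Your first half is correct and complete: $\alpha\circ\iota_n=\iota_{n'}$ because $\langle 1\rangle$ is free on one generator and there is a unique morphism from the initial to the terminal object of $\langle n'+1\rangle$; hence precomposition with any interval map never changes $\varpi$. The functoriality step $(r_{n_1}\boxx\cdots\boxx r_{n_k})\circ(\psi_1\boxx\cdots\boxx\psi_k)=(r_{n_1}\circ\psi_1)\boxx\cdots\boxx(r_{n_k}\circ\psi_k)$ is likewise fine, and two interval maps into $\langle 1\rangle$ agreeing on objects automatically agree on morphisms, so the whole argument does reduce to the object-level identity $r_{n_i}\circ\psi_i=r_{n'_i}$.

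The obstacle you flag at the end is not a presentational subtlety but a genuine failure of the statement as literally written, and you have diagnosed it correctly. If some $\psi_i$ sends the object $1$ of $\langle n'_i+1\rangle$ to $0$ (equivalently sends the generator $\bar{0}$ to an identity), then $r_{n_i}\circ\psi_i\neq r_{n'_i}$, the first movement in direction $i$ is postponed, and $\varpi$ can change. The paper itself exhibits this: in the example following Lemma \ref{LAss} one has $\varpi(a)=(12)$ but $\varpi\bigl((b\boxx \mathrm{id})\circ a\bigr)=(21)$ for $b:\langle 2\rangle\to\langle 1\rangle$ with $b(1)=0$, and that composite is exactly of the displayed form with identity front map. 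So the lemma must carry the implicit hypothesis that each $\psi_i$ sends every positive object to a positive object --- equivalently $\psi_i(\bar{0})\neq \mathrm{id}$, equivalently $0$ lies in the image of the Joyal dual of $\psi_i$ --- and under that hypothesis your identity $r_{n_i}\circ\psi_i=r_{n'_i}$ holds and your proof is complete. Your closing suggestion that the restriction is supplied by working in $\LL^{(1)}$ is not the right repair, since the lemma is also invoked in the proof of Lemma \ref{40} for paths of arbitrary complexity; the correct reading is simply an added positivity hypothesis on the unary maps $\psi_i$, which should be verified in each application.
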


\begin{lem}\label{LAss} The restriction of the map $\varpi$  to the lattice paths of complexity $1$ is a map of coloured operads
$$\varpi^{(1)}:\LL^{(1)}\to \A ss.$$
\end{lem}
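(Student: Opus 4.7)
The plan is to exploit the fact that complexity 1 paths admit a particularly simple ``block'' description. For $\psi \in \LL^{(1)}(n_1,\ldots,n_k;n)$, the hypothesis that every pairwise projection $\psi_{ij}$ has at most one corner says exactly that for each pair of distinct directions $i,j$, the generators $\bar 0,\ldots,\bar n$ of $\langle n+1\rangle$ that $\psi$ sends to non-identity moves in direction $i$ all precede, or all follow, those sent to moves in direction $j$. This pairwise separation is transitive (across any direction that contains at least one move), so it extends to a total order on $\{1,\ldots,k\}$ which is precisely $\sigma = \varpi(\psi)$: the path performs all of its moves in direction $\sigma(1)$ first, then all of its moves in direction $\sigma(2)$, and so on.

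With this description in hand, I would unpack the operadic composition
$$\langle n+1\rangle \stackrel{\psi}{\to}\langle n_1+1\rangle \boxx\cdots\boxx \langle n_k+1\rangle \stackrel{\psi_1\boxx\cdots\boxx\psi_k}{\longrightarrow} \langle m_1^1+1\rangle\boxx\cdots\boxx\langle m_{l_k}^k+1\rangle$$
where each $\psi_i \in \LL^{(1)}(m_1^i,\ldots,m_{l_i}^i;n_i)$. The block structure of the outer $\psi$ asserts that the entire family of moves arising from $\psi_{\sigma(1)}$ is executed first, then the family from $\psi_{\sigma(2)}$, and so on; inside each family, the same block property applied to $\psi_i$ orders the $l_i$ inner directions according to $\varpi(\psi_i)$. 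Reading off the first movement order of the composition yields the permutation of $\{1,\ldots,l_1+\cdots+l_k\}$ obtained by ordering the blocks according to $\sigma$ and filling the $i$-th block with $\varpi(\psi_i)$, which is exactly the operadic composition $\varpi(\psi)\circ(\varpi(\psi_1),\ldots,\varpi(\psi_k))$ in $\A ss$. Lemma \ref{xivalue} (which handles the single-output case $l_i=1$) is recovered as a special instance. Equivariance under $\Sigma_k$ and preservation of units are then formal consequences of the definitions.

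The main point requiring care — really the only one — is confirming that the composition stays within $\LL^{(1)}$, i.e.~that complexity 1 is preserved under operadic substitution. This is already built into the paper's convention that $\LL^{(n)}$ is a \emph{suboperad} of $\LL$, and also follows from the block description: moves in distinct outer directions remain separated after substitution, while moves belonging to the same outer direction $i$ inherit the complexity 1 separation from the inner path $\psi_i$, so every pairwise projection of the composite has at most one corner. Once this closure is in place, the lemma is essentially a bookkeeping identification between the ``first movement order'' of a composed complexity 1 path and the operadic composition of first movement orders in $\A ss$, with complexity 1 being precisely the hypothesis that makes these two notions coincide.
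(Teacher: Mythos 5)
Your proof is correct and follows essentially the same route as the paper's: in both arguments the key point is that complexity $1$ forces the moves in the $k$ directions to be totally separated into consecutive blocks (the paper phrases this as the unique factorization of the path's shape through $\langle 1\rangle\boxx\cdots\boxx\langle 1\rangle$), after which operadicity of $\varpi$ is direct bookkeeping. You merely spell out the composition computation that the paper dismisses with ``it follows easily.''
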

\begin{proof} It is not hard to see that for any $\psi: \langle n+1\rangle \to \langle p +1\rangle\boxx\langle q +1\rangle $ of complexity $1$
there is a unique map $\langle 1\rangle \to  \langle 1\rangle \boxx \langle 1\rangle $ making the following diagram commutative:
\begin{equation}
    \xymatrix@C = +4em{
      \langle n+1\rangle \ar[r] &\langle p +1\rangle\boxx\langle q +1\rangle 
      \\
\langle 1\rangle \ar[r]^{}\ar[u] &\langle 1\rangle\boxx\langle 1\rangle \ar[u]}    
\end{equation}
In fact this bottom lattice path can be obtained as the composite 
$$\langle 1\rangle \to \langle n+1\rangle \to \langle p +1\rangle\boxx\langle q +1\rangle \to \langle 1\rangle\boxx\langle 1\rangle .$$ 
It follows easily then that $\varpi$ restricted to the lattice paths of complexity $1$ respects operadic composition. 
\end{proof}

Similarly define $\varpi^{(c)}$ as a restriction of $\varpi$ to the lattice paths of complexity no more then $c.$ The following example shows that for $c>1$ the corresponding map $\varpi^{(c)}:\LL^{(c)}\to \A ss $ is not a map of operads. 
\begin{example} Let $c=2$ and $a:\langle 2 \rangle \to \langle 2 \rangle \boxx \langle 2 \rangle$ be the lattice path of complexity $2$ as on  picture below:
$$\xygraph{
!{<0cm,0cm>;<1cm,0cm>:<0cm,1cm>::}
!{(-1,1)}*{\circ} ="ul" !{(0,1)}*{\circ} ="u"  !{(1,1)}*{\circ}="ur"
!{(-1,0)}*{\circ} ="l" !{(0,0)}*{\circ} ="m"  !{(1,0)}*{\circ}="r"
!{(-1,-1)}*{\circ} ="dl" !{(0,-1)}*{\circ} ="d"  !{(1,-1)}*{\circ}="dr"
!{(-1,-.78)}*+{\scriptstyle 1}  !{(-.18,0)}*+{\scriptstyle 1} !{(1,.78)}*+{\scriptstyle 1}
!{(-.15,1.15)}*+{\scriptstyle 0}  !{(.15,-1.15)}*+{\scriptstyle 0}
%%%%%%%
"dl":"d"     "d":"m" 
"m":"u"     "u":"ur" 
} $$
Then $\varpi^{(2)}(a) = (12)$ the identity permutation.   Consider the operadic multiplication of $a$ and  two unary operations $b:\langle 2 \rangle \to  \langle 1 \rangle$ where $b(1) = 0$
and $id:\langle 2 \rangle \to  \langle 2 \rangle.$ 
The result is the lattice path $c$
$$\langle 2 \rangle \stackrel{a}{\longrightarrow } \langle 2 \rangle \boxx \langle 2 \rangle \stackrel{b\square id}{\longrightarrow}   \langle 1 \rangle \boxx \langle 2 \rangle      $$
given by the picture
$$\xygraph{
!{<0cm,0cm>;<1cm,0cm>:<0cm,1cm>::}
%!{(-1,1)}*{\circ} ="ul" 
!{(0,1)}*{\circ} ="u"  !{(1,1)}*{\circ}="ur"
%!{(-1,0)}*{\circ} ="l" 
!{(0,0)}*{\circ} ="m"  !{(1,0)}*{\circ}="r"
%!{(-1,-1)}*{\circ} ="dl" 
!{(0,-1)}*{\circ} ="d"  !{(1,-1)}*{\circ}="dr"
%!{(-1,-.78)}*+{\scriptstyle 1} 
 !{(-.18,0)}*+{\scriptstyle 1} !{(1,.78)}*+{\scriptstyle 1}
!{(-.15,1.15)}*+{\scriptstyle 0}  !{(.15,-1.15)}*+{\scriptstyle 0}
%%%%%%%
%"dl":"d"
     "d":"m" 
"m":"u"     "u":"ur" 
} $$
Clearly $\varpi^{(2)}(c) = (21)$ because the first movement is by the second coordinate. 
On the other hand the result of multiplication in $Ass$  of $\varpi^{(2)}(a) = (12)$ and $\varpi^{(2)}(b) = \varpi^{(2)}(id) = 1$ is the  permutation $(12) \ne \varpi^{(2)}(c).$   
\end{example}

%\subsection{Morphism from lattice paths operad to path operad}

\subsection{Shuffles and lattice paths}

\begin{defin} A lattice path $\psi:\langle n+1\rangle \to \langle n_1 +1\rangle\boxx\ldots\boxx\langle n_k+1\rangle $ is {\it a shuffle path}   if for any $0\le i \le n$ the morphism
$\psi(\bar{i})$ is one of the generators of the form $(id,\ldots,id,\bar{j},id,\ldots,id),$ where $ 0\le j \le n_s$ for an $1\le s\le k.$   
\end{defin}

\begin{lem} Shuffle-paths form a suboperad $\shuf$ of the lattice path operad.  
\end{lem}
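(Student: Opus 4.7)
The plan is to verify the three standard suboperad axioms for $\shuf \subset \LL$: closure under units, closure under operadic substitution, and closure under the $\Sigma_k$-action. All three reduce to the same elementary observation, namely that a single generator of a boxtimes product has exactly one ``active'' coordinate and is mapped to a morphism of the same shape under the structural functors of $\LL$.

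For the unit of color $n$, the identity in $\LL(n;n) = \Cat_{*,*}(\langle n+1\rangle,\langle n+1\rangle)$ sends each generator $\bar{i}$ to itself; this is trivially a shuffle path. For the $\Sigma_k$-action, a permutation $\sigma$ acts by postcomposing $\psi$ with the symmetry isomorphism permuting factors of $\langle n_1+1\rangle\boxtimes\cdots\boxtimes\langle n_k+1\rangle$. This symmetry relocates a generator of the form $(id,\ldots,\bar{j},\ldots,id)$ to one of the same form (with the active entry in the $\sigma^{-1}(s)$-th slot), so being a shuffle path is preserved.

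The main content is closure under the operadic substitution. Given a shuffle path $\psi:\langle n+1\rangle\to\boxtimes_{s=1}^k\langle n_s+1\rangle$ and shuffle paths $\phi_s:\langle n_s+1\rangle\to\boxtimes_{t=1}^{l_s}\langle m_{s,t}+1\rangle$, I would examine the substitution
\[
\langle n+1\rangle \xrightarrow{\psi} \boxtimes_{s=1}^k \langle n_s+1\rangle \xrightarrow{\boxtimes_s \phi_s} \boxtimes_{s}\Bigl(\boxtimes_t \langle m_{s,t}+1\rangle\Bigr) \;\cong\; \boxtimes_{s,t}\langle m_{s,t}+1\rangle
\]
on an arbitrary generator $\bar{i}$. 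Since $\psi$ is a shuffle path, $\psi(\bar{i})$ is a single generator $(id,\ldots,\bar{j}_s,\ldots,id)$. The tensor product functor $\boxtimes_s\phi_s$ acts coordinatewise on such generators, producing $(id,\ldots,\phi_s(\bar{j}_s),\ldots,id)$; and because $\phi_s$ is a shuffle path, $\phi_s(\bar{j}_s)$ is itself of the form $(id,\ldots,\bar{r},\ldots,id)$ in $\boxtimes_t\langle m_{s,t}+1\rangle$. After the associativity identification, the image of $\bar{i}$ is a single generator in the flat boxtimes, which is exactly the shuffle-path condition.

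The only mild subtlety — and the single point that might look like an obstacle — is verifying that the associativity isomorphism for $\boxtimes$ really does carry ``nested single generators'' to ``flat single generators''. This is however immediate from the presentation of $A\boxtimes B$: its generators are precisely the morphisms $(f,id_B)$ and $(id_A,g)$, and this structure is compatible with nesting. So no genuine combinatorial work is needed, and the lemma follows.
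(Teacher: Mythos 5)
Your proof is correct: the paper states this lemma without proof, treating it as immediate from the definitions, and your direct verification (units, substitution via the coordinatewise action of $\boxtimes_s\phi_s$ on single generators, and the $\Sigma_k$-action) is exactly the intended argument. The one "subtlety" you flag — that the associativity of $\boxtimes$ carries nested single generators to flat single generators — is indeed harmless, as you note.
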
 

Relations to classical shuffles.  
\begin{lem}\label{classical}  Any shuffle-path $\psi$ determines a permutation $\mu(\psi)$ of $\{0,1,\ldots,n\}$ as follows:
 $\mu(\psi)(i) = j+ (n_1+1)+(n_2+1)+\ldots+(n_{s-1}+1),$ where $\bar{j}$ is on $s$-th place in $\psi(\bar{i})= (id,\ldots,id,\bar{j},id,\ldots,id).$ 
This formula establishes a one-to-one correspondence between  $(n_1+1,\ldots,n_k+1)$-shuffles and elements from 
 $\shuf(n_1,\ldots,n_k;n_1+\ldots+n_k +k-1).$    
 \end{lem}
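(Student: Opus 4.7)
The plan is to verify that the formula defining $\mu(\psi)$ does produce a bijection, then observe that the shuffle condition is automatic, and finally exhibit an inverse. First I would fix the arithmetic of the source. A shuffle path by definition sends every generator $\bar i\in\langle n+1\rangle$ to exactly one ``direction generator'' $(id,\ldots,id,\bar j,id,\ldots,id)$ in $\langle n_1+1\rangle\boxx\cdots\boxx\langle n_k+1\rangle$. Each such move increments the $s$-th coordinate by one. Since the composite of $\psi$ must go from the weakly initial object $(0,\ldots,0)$ to the weakly terminal object $(n_1+1,\ldots,n_k+1)$, the total number of moves in the $s$-th coordinate must equal $n_s+1$, and so
\[
n+1 \;=\; \sum_{s=1}^k (n_s+1) \;=\; n_1+\cdots+n_k+k,
\]
which matches the arity $n_1+\cdots+n_k+k-1$ in the statement.

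Next I would show $\mu(\psi)$ is a bijection. Moreover, the $n_s+1$ moves in the $s$-th coordinate are forced to appear in the increasing order $\bar 0,\bar 1,\ldots,\bar{n_s}$, because their composition in $\langle n_s+1\rangle$ has to be the unique morphism $0\to n_s+1$, which factors uniquely through the generators in the natural order. In particular every pair $(j,s)$ with $0\le j\le n_s$ occurs as the label of exactly one generator $\bar i$, so the assignment
\[
i\ \longmapsto\ j+(n_1+1)+\cdots+(n_{s-1}+1)
\]
is a bijection $\{0,\ldots,n\}\to\{0,\ldots,n\}$, i.e.\ $\mu(\psi)$ is a permutation. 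Since the indices in the $s$-th block $\{(n_1+1)+\cdots+(n_{s-1}+1),\ldots,(n_1+1)+\cdots+(n_s+1)-1\}$ are hit by the $s$-moves of $\psi$ in their own natural order, $\mu(\psi)^{-1}$ is increasing on each such block; that is precisely the definition of a $(n_1+1,\ldots,n_k+1)$-shuffle.

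For the reverse construction, given any $(n_1+1,\ldots,n_k+1)$-shuffle $\sigma$ I would read off a shuffle path $\psi_\sigma$ as follows: for each $i\in\{0,\ldots,n\}$, write $\sigma(i)=j+(n_1+1)+\cdots+(n_{s-1}+1)$ with $0\le j\le n_s$, and declare $\psi_\sigma(\bar i)=(id,\ldots,id,\bar j,id,\ldots,id)$ with $\bar j$ in the $s$-th slot. The shuffle condition on $\sigma$ guarantees that within each slot the successive values of $\bar j$ increase by one starting at $0$, so the slot-wise composites give the generators $0\to n_s+1$ in $\langle n_s+1\rangle$, which shows $\psi_\sigma$ is a well-defined functor into the funny tensor product and preserves weakly initial and weakly terminal objects. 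By construction $\mu(\psi_\sigma)=\sigma$, and conversely $\psi_{\mu(\psi)}=\psi$ since both are determined on the generators of $\langle n+1\rangle$ by the same data.

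The verification is essentially bookkeeping; there is no real obstacle beyond being careful that the ``funny'' tensor product imposes no extra relations on the morphisms we use (which is immediate, since a shuffle path only uses one direction generator at a time, so no ambiguity between $(f,id)\circ(id,g)$ and $(id,g)\circ(f,id)$ arises), and that the slot-wise-increasing condition on a classical shuffle corresponds exactly to the slot-wise composability requirement on $\psi_\sigma$.
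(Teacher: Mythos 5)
Your proof is correct and is exactly the standard bookkeeping argument; the paper itself gives no proof here, simply remarking ``This is classical'' with a citation to Loday, so your writeup supplies precisely the verification that is left to the reference. The one point worth keeping explicit, which you do handle, is that freeness of $\langle n_s+1\rangle$ on its linear quiver forces the moves in each coordinate to occur in the order $\bar 0,\bar 1,\ldots,\bar{n_s}$, which is what makes $\mu(\psi)$ land in the shuffles and makes the inverse construction well defined.
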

 \begin{proof} 
 This is classical \cite{lo}.  
 \end{proof}

 \begin{lem}
 \begin{enumerate}\item For any factorisation of a lattice path 
$$ \langle n+1\rangle \to \langle n'+1\rangle \stackrel{\psi'}{\to}  \langle n_1 +1\rangle\boxx\ldots\boxx\langle n_k+1\rangle $$ 
 $\cc(\psi') = \cc(\psi)$ and $\varpi(\psi)=\varpi(\psi').$
 \item Any lattice path  $\psi:\langle n+1\rangle \to \langle n_1 +1\rangle\boxx\ldots\boxx\langle n_k+1\rangle$ admits a unique factorisation 
\begin{equation}\label{shufactor} \langle n+1\rangle \to \langle n^\dag+1\rangle \stackrel{\psi^\dag}{\to}  \langle n_1 +1\rangle\boxx\ldots\boxx\langle n_k+1\rangle ,\end{equation} where $\psi^\dag$ is a shuffle path.  
\end{enumerate}
\end{lem}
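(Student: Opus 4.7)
The plan for part (1) is to observe that both $\cc(\psi)$ and $\varpi(\psi)$ depend only on the ``shape'' of the lattice path, namely the composites $\langle 1\rangle\to\langle n+1\rangle\stackrel{\psi}{\to}\langle n_1+1\rangle\boxx\cdots\boxx\langle n_k+1\rangle$ projected onto the various coordinate pairs. For $\varpi$ this is immediate from Lemma \ref{xivalue} applied with trivial $\psi_1,\ldots,\psi_k$. For complexity, given $\psi=\psi'\circ f$ with $f\colon\langle n+1\rangle\to\langle n'+1\rangle$, the sub-composite $\langle 1\rangle\to\langle n+1\rangle\stackrel{f}{\to}\langle n'+1\rangle$ coincides with the unique morphism $\langle 1\rangle\to\langle n'+1\rangle$ in $\Cat_{*,*}$: the category $\langle 1\rangle$ is free on one generator $\bar 0\colon 0\to 1$, and any $\Cat_{*,*}$-morphism is forced to send $0\mapsto 0$, $1\mapsto n'+1$, and $\bar 0$ to the unique morphism between those objects. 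Consequently $\pi_{ij}\circ\psi$ and $\pi_{ij}\circ\psi'$ have the same shape morphism for every $i<j$, so their corner counts agree, and $\cc(\psi)=\cc(\psi')$ termwise.

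For part (2), I will use the fact that every morphism in a funny tensor product $\langle n_1+1\rangle\boxx\cdots\boxx\langle n_k+1\rangle$ has a unique presentation as a word in the atomic generators $(id,\ldots,id,\bar j,id,\ldots,id)$. The defining relations of $\boxx$ only merge two consecutive same-factor atomic letters into their composite; the opposite normalisation, expanding every composite in a single factor into its atomic letters, therefore produces a canonical representative. To prove existence, for each $i$ let $\ell_i\ge 0$ be the length of the atomic word for $\psi(\bar{i})$ (with $\ell_i=0$ iff $\psi(\bar i)=id$), set $n^\dag+1=\sum_i\ell_i$, and define $\psi^\dag\colon\langle n^\dag+1\rangle\to\langle n_1+1\rangle\boxx\cdots\boxx\langle n_k+1\rangle$ on its successive generators by concatenating, in order, the atomic letters of $\psi(\bar 0),\psi(\bar 1),\ldots,\psi(\bar n)$. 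Define $f\colon\langle n+1\rangle\to\langle n^\dag+1\rangle$ by $f(i)=\sum_{j<i}\ell_j$, so that $f(\bar i)$ is the block of $\ell_i$ consecutive generators (or the identity when $\ell_i=0$). Then $\psi^\dag\circ f=\psi$ and $\psi^\dag$ is a shuffle path by construction.

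For uniqueness, suppose $\psi=\psi^\dag\circ f$ is any shuffle factorisation. Since $f$ is an interval morphism, $f(\bar i)$ is the composite of $\ell'_i:=f(i+1)-f(i)$ consecutive generators of the middle interval, and since $\psi^\dag$ is a shuffle path, each of these generators goes to an atomic letter. Hence $\psi(\bar i)=\psi^\dag(f(\bar i))$ realises $\psi(\bar i)$ as an atomic word of length $\ell'_i$. Uniqueness of atomic normal forms in $\boxx$ then forces $\ell'_i=\ell_i$ and pins down both the individual atomic letters of $\psi^\dag$ and the values of $f$, so the factorisation agrees with the one built above. I expect the only genuine obstacle to be the verification that the relations of the funny tensor product yield a unique atomic normal form; once that is in hand the rest is careful bookkeeping, the mild subtlety being that generators with $\psi(\bar i)=id$ contribute $\ell_i=0$ and so $f$ is generally not injective.
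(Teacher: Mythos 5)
Your proposal is correct and follows essentially the same route as the paper: part (1) by noting that both invariants are read off the shape $\langle 1\rangle\to\langle n_1+1\rangle\boxx\cdots\boxx\langle n_k+1\rangle$, which is unchanged by precomposition with an interval map, and part (2) by expanding each $\psi(\bar i)$ into its (unique) atomic word and concatenating, the paper phrasing the resulting map $\langle n+1\rangle\to\langle n^\dag+1\rangle$ as a Joyal dual while you write it down directly. Your uniqueness argument via atomic normal forms is a slightly more explicit version of the paper's remark that $\psi^\dag$ is determined by the shape of $\psi$; no gap.
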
 
 
\begin{proof} 
First part of the Lemma is obvious.  
\nl
For the second part we construct $\langle n+1\rangle \stackrel{\alpha}{\to} \langle n^\dag+1\rangle$ as follows. Let identify the ordinal $[n]$ with the naturally ordered set 
 $\{\bar{0},\ldots,\bar{n}\}.$ For each $\bar{i}\in supp(\psi)$ we have a unique presentation $\psi(\bar{i}) = p_0^i \circ p_1^i\circ\ldots\circ p_{l_i}^i$ for certain generators from  
$ \langle n_1 +1\rangle\boxx\ldots\boxx\langle n_k+1\rangle.$  We then consider an ordinal $[l_i]$ and take $[n^\dag] = [l_0]\ast[l_1]\ast\ldots\ast[l_n]$, where $\ast$ means the ordinal sum (concatenation) of ordinals. We consider each $[l_i]$ as a subordinal of $[n^\dag]$ in a natural way.  We then have a map $\beta: [n^\dag]\to [n]$ which sends each element of the subordinal $[l_i]$ to $\bar{i}\in [n].$     Let $\alpha:\langle n+1\rangle \stackrel{}{\to} \langle n^\dag+1\rangle$ be its Joyal's dual. By definition, $\alpha (\bar{i})$ is the composite of the generators from $\langle l_i +1 \rangle.$ We then define $\psi^\dag$ on such a generator as equal to the corresponding $p^i_j.$ 
\nl
Thus we have a required factorisation. Uniqueness follows from the fact that $\psi^\dag$ is already determined by the `shape' of $\psi$ i.e. by the composite
$$ \langle 1\rangle \to \langle n+1\rangle \stackrel{\psi}{\to}  \langle n_1 +1\rangle\boxx\ldots\boxx\langle n_k+1\rangle .$$ 
\end{proof}
  
\begin{lem} \label{40} 
Let $\psi:\langle n+1\rangle \to \langle n_1 +1\rangle\boxx\ldots\boxx\langle n_k+1\rangle$ be a lattice path and $\psi_1,\ldots, \psi_k$ be a set of composable lattice paths, that is the composition $\psi(\psi_1,\ldots,\psi_k)$ is defined in the lattice path operad. Then there exists a shuffle path $\psi^\ddag$ such that the composite $\psi^\ddag(\psi^\dag_1,\ldots,\psi^\dag_k)$ is defined and such that: 
\begin{enumerate} \item $\varpi(\psi(\psi_1,\ldots,\psi_k)) =    \varpi( \psi^\ddag(\psi^\dag_1,\ldots,\psi^\dag_k)  ) ;$ and
\item $\varpi(\psi) (\varpi(\psi_1),\ldots,\varpi(\psi_k)) = \varpi(\psi^\ddag)(\varpi(\psi^\dag_1),\ldots,\varpi(\psi^\dag_k)),$  
\end{enumerate}
where in the last raw the composite is computed in the operad $\A ss.$
\end{lem}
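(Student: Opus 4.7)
The plan is to construct $\psi^\ddag$ from the shuffle factorisations of $\psi$ and the $\psi_i$, and then verify both conditions via Lemma~\ref{xivalue} on the invariance of $\varpi$ under pre- and post-composition by morphisms in $\Cat_{*,*}$.

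First I would apply the uniqueness of shuffle factorisation (the displayed equation~\eqref{shufactor}) to write $\psi = \psi^\dag\circ \alpha$ with $\alpha\colon\langle n+1\rangle\to\langle n^\dag+1\rangle$ in $\Cat_{*,*}$, and $\psi_i = \psi_i^\dag\circ\alpha_i$ with $\alpha_i\colon\langle n_i+1\rangle\to\langle n_i^\dag+1\rangle$. The naive choice $\psi^\ddag=\psi^\dag$ does not work: the input colours of $\psi^\dag$ are $n_1,\ldots,n_k$ whereas the domains of the $\psi_i^\dag$ are $\langle n_i^\dag+1\rangle$, so the putative operadic substitution is not defined. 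To fix this I would form the composite
\[
(\alpha_1\boxx\cdots\boxx\alpha_k)\circ\psi^\dag\colon\ \langle n^\dag+1\rangle\ \longrightarrow\ \langle n_1^\dag+1\rangle\boxx\cdots\boxx\langle n_k^\dag+1\rangle
\]
and take its shuffle factorisation $(\alpha_1\boxx\cdots\boxx\alpha_k)\circ\psi^\dag=\psi^\ddag\circ\beta$. The resulting $\psi^\ddag$ is a shuffle path whose input colours $n_i^\dag$ match the domains of the $\psi_i^\dag$, so the composite $\psi^\ddag(\psi_1^\dag,\ldots,\psi_k^\dag)$ is defined in $\LL$; a direct check on atomic generators (using that a shuffle of shuffles is a shuffle) shows that this composite is itself a shuffle path.

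For condition~(1) I would substitute the factorisations into $\psi(\psi_1,\ldots,\psi_k)=(\psi_1\boxx\cdots\boxx\psi_k)\circ\psi$ to obtain
\[
\psi(\psi_1,\ldots,\psi_k) = (\psi_1^\dag\boxx\cdots\boxx\psi_k^\dag)\circ(\alpha_1\boxx\cdots\boxx\alpha_k)\circ\psi^\dag\circ\alpha = \psi^\ddag(\psi_1^\dag,\ldots,\psi_k^\dag)\circ(\beta\circ\alpha),
\]
exhibiting $\psi(\psi_1,\ldots,\psi_k)$ as the pre-composition of $\psi^\ddag(\psi_1^\dag,\ldots,\psi_k^\dag)$ by the $\Cat_{*,*}$-morphism $\beta\circ\alpha$. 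Lemma~\ref{xivalue} then equates the two first movement orders. Condition~(2) follows from the same lemma applied to each factorisation: $\varpi(\psi^\ddag)=\varpi((\alpha_1\boxx\cdots\boxx\alpha_k)\circ\psi^\dag)=\varpi(\psi^\dag)=\varpi(\psi)$ and $\varpi(\psi_i^\dag)=\varpi(\psi_i)$, so the two sides of~(2) literally coincide.

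The main obstacle will be the bookkeeping of colours through the two nested shuffle factorisations and checking that composing shuffle paths yields a shuffle path; once that is set up, both conditions reduce mechanically to Lemma~\ref{xivalue}.
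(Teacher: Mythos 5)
Your proposal is correct and follows essentially the same route as the paper: you build $\psi^\ddag$ by taking the shuffle factorisation of $(\alpha_1\boxx\cdots\boxx\alpha_k)\circ\psi^\dag$ and then reduce both claims to the invariance of $\varpi$ under pre-composition by interval morphisms and post-composition by box products (Lemma \ref{xivalue}). The only cosmetic difference is that the paper deduces (1) by invoking uniqueness of the shuffle factorisation to identify $\psi^\ddag(\psi_1^\dag,\ldots,\psi_k^\dag)$ with $(\psi(\psi_1,\ldots,\psi_k))^\dag$, whereas you obtain the same identity by direct substitution.
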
 
\begin{proof}  Consider the composite $\psi(\psi_1,\ldots,\psi_k):$
 $$\langle n+1\rangle \stackrel{\psi}{\to} \langle n_1 +1\rangle\boxx\ldots\boxx\langle n_k+1\rangle \stackrel{\psi_1\boxx\ldots\boxx\psi_k}{\longrightarrow} 
  \langle m_{1,l_1} +1\rangle\boxx\ldots\boxx\langle m_{k,l_k}+1\rangle.$$ 
  We factorise $\psi$ and $\psi_i, \ 1\le i\le k$ as in (\ref{shufactor}) to get $\psi^\dag$ and $\psi_i^\dag.$ Then we factorise the composite
 \begin{equation} \label{comp} \langle n^\dag +1\rangle  \stackrel{\psi^\dag}{\longrightarrow}     \langle n_1 +1\rangle\boxx\ldots\boxx\langle n_k+1\rangle {\longrightarrow}   
  \langle n^\dag_1 +1\rangle\boxx\ldots\boxx\langle n^\dag_k+1\rangle \end{equation}
  as a morphism of intervals  followed  by a shuffle path $\psi^\ddag.$ 
  
From the uniqueness of factorisation follows that $(\psi(\psi_1,\ldots,\psi_k))^\dag =    \psi^\ddag(\psi^\dag_1,\ldots,\psi^\dag_k)$ and therefore 
$\varpi(\psi(\psi_1,\ldots,\psi_k)) =    \varpi( \psi^\ddag(\psi^\dag_1,\ldots,\psi^\dag_k)  )$ from Lemma  \ref{xivalue}. 

Now, $\varpi(\psi) = \varpi(\psi^\dag)$ and  $\varpi(\psi_i) = \varpi(\psi_i^\dag).$ The value of $\varpi$ on the composite (\ref{comp}) is  equal to $\varpi(\psi^{\dag})$ and also to $\varpi(\psi^{\ddag})$ by 
Lemma \ref{xivalue} again. Therefore, $$\varpi(\psi) (\varpi(\psi_1),\ldots,\varpi(\psi_k)) = \varpi(\psi^\ddag)(\varpi(\psi^\dag_1),\ldots,\varpi(\psi^\dag_k)).$$ 
 \end{proof} 

\subsection{Shuffle paths and their sketches}
  
The lattice paths has another presentation as strings of integers with a number of vertical bars between them \cite[Section 2.2]{BB}. The shuffle paths correspond to the strings where there is exactly one bar between each pair of consecutive entries. This presentation can be reformulated as follows. Let  $\FM(k)$ be a subset of  elements of the free monoid $FM(p_1,\ldots,p_k)$ on $k$ elements which contain each generator at least once.  Such an element is a word $p$ of the variables $p_1,\ldots,p_k.$ Let $\FF(n_1,\ldots,n_k; m)\subset \FM(k)$ be the subset of words in which  a variable $p_i$ appears $n_i +1$ times if $n_1+\ldots+n_k = m+1-k$ and an empty set otherwise. These sets form a $\bSet$-operad $\FF$ whose composition $\circ_i$ can be described as follows. Let $p\in \FF(p_1,\ldots,p_k), \ q\in \FF(q_1,\ldots,q_m), \ 1\le i \le k.$ We can write $p$  as  a string $p_{i_1}p_{i_2} p_{i_3}\ldots $ and similarly for $q.$  Observe that in this presentation we put all generators in degree $1$ that is we write $p_i \ldots p_i$ ($d$-times) for $p_i^d.$    We suppose that the number of occurrence of the variable $p_i$ in $p$ is equal to the length of the string $q.$ Then the string $p\circ_i q$ is obtained by replacing $j$-th occurrence of the  variable  $p_i$ in $p$ by the $j$-th element from $q$ and then renumbering of the variables.  

\begin{lem}\label{string} 
There is an isomorphism between shuffle paths operad $\shuf$ and the operad $\FF.$   
\end{lem}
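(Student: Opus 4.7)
The plan is to construct the isomorphism $\Phi:\shuf\to\FF$ directly and verify it preserves the operad structure; the content of the lemma is essentially a bookkeeping identification, so the work is in laying out the dictionary cleanly and checking that operadic substitution on shuffle paths matches the substitution rule defining $\FF$.

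First, I would define $\Phi$ on operations as follows. A shuffle path
$$\psi:\langle n+1\rangle\to\langle n_1+1\rangle\boxx\cdots\boxx\langle n_k+1\rangle$$
is by definition determined by the sequence $\psi(\bar 0),\psi(\bar 1),\ldots,\psi(\bar n)$, where each $\psi(\bar j)$ is a generator of the form $(\mathrm{id},\ldots,\bar{\ell},\ldots,\mathrm{id})$ sitting in a unique factor, say the $s_j$-th. Set
$$\Phi(\psi)\ :=\ p_{s_0}\,p_{s_1}\cdots p_{s_n}\in FM(p_1,\ldots,p_k).$$
Because $\psi$ starts at the initial object $(0,\ldots,0)$ and ends at the terminal object $(n_1+1,\ldots,n_k+1)$, its projection onto the $s$-th factor must traverse all $n_s+1$ generating morphisms of $\langle n_s+1\rangle$ in order; hence $p_s$ appears exactly $n_s+1$ times in $\Phi(\psi)$, and the total length is $\sum_s(n_s+1)=n+1$, so $\Phi(\psi)\in \FF(n_1,\ldots,n_k;n)$ with $n_1+\cdots+n_k = n+1-k$ as required.

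Second, I would show $\Phi$ is a bijection on each component by writing down its inverse. Given a string $w=p_{s_0}p_{s_1}\cdots p_{s_n}\in \FF(n_1,\ldots,n_k;n)$, define $\psi_w$ on generators by sending $\bar j$ to the generator in the $s_j$-th factor whose source is the number of previous occurrences of $p_{s_j}$ in $w$. Because each $p_s$ occurs exactly $n_s+1$ times and in left-to-right order, the successive generators in the $s$-th factor are precisely $\bar 0,\bar 1,\ldots,\overline{n_s}$, so $\psi_w$ is well defined and is a shuffle path. The constructions $\psi\mapsto\Phi(\psi)$ and $w\mapsto\psi_w$ are inverses by inspection.

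Third, I would verify $\Phi$ intertwines operadic composition. Given a shuffle path $\psi$ in $\shuf(n_1,\ldots,n_k;n)$ and, for some $1\leq i\leq k$, a shuffle path $\varphi$ in $\shuf(m_1,\ldots,m_r;m)$, the substitution $\psi\circ_i\varphi$ is obtained by replacing each occurrence of a generator moving in the $i$-th factor of the hypercube by the corresponding generator of the hypercube of $\varphi$ (with the other $k-1$ coordinates held constant), under the identification of the $i$-th factor of the target of $\psi$ with the terminal object of $\varphi$. Translated to strings, this replaces the $j$-th occurrence of $p_i$ in $\Phi(\psi)$ by the $j$-th letter of $\Phi(\varphi)$ and then renumbers—which is exactly the definition of $\circ_i$ in $\FF$. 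The compatibility with the $\Sigma_k$-actions is immediate since both actions permute the labels $p_1,\ldots,p_k$, and the identity operation in $\shuf(n;n)$ corresponds to the one-letter string $p_1$ occurring $n+1$ times, matching the unit of $\FF$. The only mildly delicate point, and the one I would take care to articulate precisely, is the compatibility of the initial/terminal object identifications used in operadic substitution of pointed-pointed categories with the naive letter-substitution in $\FF$; once that matching is spelled out, the rest is formal.
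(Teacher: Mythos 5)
Your proposal is correct and follows the same route as the paper, which simply records (citing \cite{BB}) that a shuffle path determines the string of integers $i_0,\ldots,i_n$ read off from which factor each generator $\bar p$ moves in, and interprets that string as a word in $FM(k)$. You spell out the same dictionary in more detail --- the letter counts $n_s+1$, the inverse construction, and the matching of $\circ_i$ with letter substitution --- all of which checks out.
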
 
\begin{proof} According to \cite{BB} the path $$\psi: \langle n+1\rangle \stackrel{}{\to} \langle n_1 +1\rangle\boxx\ldots\boxx\langle n_k+1\rangle$$ determines a string of integers $i_0,\ldots,i_n,$ where $i_p = i$ if  $\psi(\bar{p})= (id,\ldots,id,\bar{j},id,\ldots,id)$ and $j$ is on $i$-th place.  We interpret  this string as a word from $\FM(k).$  
\end{proof}

We now introduce a sequence of sets  $\TT(k), k\ge 1$ whose  elements we call {\it sketches}. Sketches help us to handle the combinatorics of complexity and of the first movement order. The sequence $\TT(k)$ does not form an operad in $\bSet$ but, in fact, can be equipped with an operad structure after linearisation. This linearised version was introduced by McClure and Smith in \cite{ms} under the name {\it surjection operad}. Sketches form a linear basis of this operad and can be identified with {\it nondegenerate surjections} \cite[Definition 2.13]{ms}. The facts below are not completely new and can be found in \cite{ms}. We present them here for the readers convenience.  

Let $\TT(k)\subset FI(s_1,\ldots,s_k)$ be the subset of elements of the free monoids on $k$ idempotent generators which contains each generator at least once.
 They are, of course,  equivalence classes of words in the variables $s_1,\ldots,s_k,$ where the equivalence relation is generated by $s_i = s_is_i$ for all $1\le i\le k .$  We say that such a word is a reduced form with respect to a variable $s_i$  if it does not contain a repeated subword of the form $s_is_i.$  It is a reduced form if it is a reduced form with respect to all variables. Obviously, every element of $\TT(k)$ is uniquely represented by a reduced  word and we will suppose by default that the reduced words are exactly the elements of $\TT(k).$  We can then define a length of a sketch ${\bold l}(s)$ as the length of its reduced form.

For each $1\le i<j \le k$ there is a projection $\pi_{ij}:\TT(k)\to \TT(2)$ which is computed by substituting the unit $e$ to the word $s$ for all variables not equal to $s_i$ and $s_j.$
For example: 
$$\pi_{23}(s_1s_3s_1s_3s_4s_1s_2s_3s_1s_2) = es_3es_3 ee s_2 s_3 e s_2 = s_2 s_1 s_2 s_1.$$

We define the complexity index of an element $s\in \TT(2)$ as ${\bold l}(s) -1.$ For an element $s\in \TT(k)$ and $1\le i<j \le k$ we define the complexity  index $\cc_{ij}(s)$ as the complexity index of the corresponding projection on $\pi_{ij}(s).$
The complexity index $\cc(s)$ of the sketch $s$ is the maximum of the pairwise complexity indices.   

Finally, we define the first movement order $\varpi(s)$ of a sketch $s\in \TT(k)$ as a linear order on $\{1,\ldots,k\}$ in which variables appear first in the word $s.$

 An {\it expansion} $(s)_i$ of a sketch $s\in \TT(n)$ at a variable $s_i$ is a string of variables $s_1,\ldots,s_n$ such that:
 \begin{enumerate}\item
  As an element of $FI(s_1,\ldots,s_n)$ it is equal to $s$
  \item It is in a reduced form with respect to all variables except for $s_i.$ 
  \end{enumerate}
An example of an expansion  of   $s = s_{1}  s_{2}  s_{1} s_{2} s_{1}  $ at $s_1$ is 
   \begin{equation}\label{ex} s = s_1s_{1}  s_{2}  s_{1}s_1s_1 s_{2} s_{1}.  \end{equation}
 
For a shuffle path $\psi:\langle n+1\rangle \to \langle n_1 +1\rangle\boxx\ldots\boxx\langle n_k+1\rangle $ we then associate a sketch $\tr(\psi)\in \TT(k)$ and its expansions 
$\tr_i(\psi) =  (\tr(\psi))_i$ for each $1\le i\le k$ as follows:
     $\psi$ determines an element of $\FM(k)$ as in Lemma \ref{string} .  Then $\tr(\psi)$  is the image of this element under  a natural  reduction map $\FM(k)\to \TT(k).$ To get $\tr_i(\psi)$ we reduce the same word by all variables except for $i.$

Finally, we can substitute a shuffle path  $t\in \FM(d)$  to an expansion $(s)_i$ of $s$ provided the length of $t$ is equal to the number of occurrences of $s_i$ in $(s)_i.$    For this we replace the $j$-th occurrence of $s_i$ by the   $j$-th term of $t$ (in natural order). Then change $t$ to $s$ and renumber  by adding $i$ to $t_j$ and $i+c+d$ to the variable $s_{i+c}$ for $c>0.$  We then apply $\tr$ to the resulting shuffle path and produce a sketch. We denote this operation $(s)_i\circ t.$

For example, for an expansion $(s)_1$  from the example (\ref{ex}) and  a shuffle path $t = t_1t_2t_1t_3t_1t_2t_3.$ 
the result of substitution  $(s)_1\circ t$  is:
$$(t_1)(t_{2})  s_{2}  (t_{1})(t_3)(t_2) s_{2}( t_{3}) = s_1s_{2}  s_{4}  s_{1}s_3s_2 s_{4} s_{3}                                           .$$

%\begin{remark} It is not hard to see that the construction above can be completed to an algebraic operad structure $\bZ[\TT]$ if we define
%$\bZ[\TT](n)$ to be an abelian group freely generated by elements of $\TT(n).$ 
%The subsitutional product on the generating elements is defined  as 
%$$s\circ_{i} t = \sum s\circ_{i} (t) ,$$   
%where the sum runs over all possible bracketings of the sketch $t.$ 

%The spaces of operations of this operad are infinite dimensional over $\bZ$ but it admits an exhaustive filtration by finite dimensional suboperads by complexity. 
%\end{remark}   

\begin{lem} For two shuffle path $\psi$ and $\omega$ the following is true:
\begin{enumerate}\item $\varpi(\psi) = \varpi(\tr(\psi)) ;$
\item  $\cc_{ij}(\psi) = \cc_{ij}({\tr(\psi)}) = \link(\pi_{ij}(p(\psi));$
\item $\tr(\psi\circ_i \omega) = \tr_i(\psi)\circ \omega$ if $\psi\circ_i \omega$ is defined.
\end{enumerate}
\end{lem}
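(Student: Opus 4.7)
The plan is to handle the three parts in order, the first two by matching definitions and the third by direct bookkeeping.

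For part (1), I would invoke Lemma \ref{string} to identify the shuffle path $\psi$ with its word $w_\psi\in\FM(k)$, so that $\tr(\psi)$ is the image of $w_\psi$ under the reduction $\FM(k)\to\TT(k)$ induced by the relations $s_is_i=s_i$. Both $\varpi(\psi)$ and $\varpi(\tr(\psi))$ are, by definition, the linear order on $\{1,\ldots,k\}$ determined by the positions of the first occurrences of each variable in the respective word. Since idempotent reduction only collapses consecutive repetitions of a letter and never moves or removes a first occurrence, the two orderings coincide.

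For part (2), the first equality $\cc_{ij}(\psi) = \cc_{ij}(\tr(\psi))$ is a direct count. The sketch $\pi_{ij}(\tr(\psi))$ is the reduction of the subword of $w_\psi$ obtained by deleting all letters not in $\{p_i,p_j\}$, and its length minus one equals the number of alternations between $p_i$-letters and $p_j$-letters in $w_\psi$; that number is precisely the number of corners of $\psi_{ij}$, which is $\cc_{ij}(\psi)$. For the second equality, I would apply Lemma \ref{nond} to $p(\psi_{ij})$ to strip off its identity steps, producing a path $\phi'$ with the same linking number as $p(\psi_{ij})$; because $\psi$ is a shuffle, each move of $\psi_{ij}$ involves a single generator, so $\phi'$ is in fact sharp Delannoy. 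Lemma \ref{sharplink} then tells us that $\phi'$ has a unique lifting and its complexity equals $\link(\phi') = \link(p(\psi_{ij}))$. This unique lifting is precisely $\psi_{ij}$ with its identity steps collapsed, which has the same corner count as $\psi_{ij}$, completing the chain of equalities.

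For part (3), I plan to trace the operadic composition on the level of $\FM$-words. Under Lemma \ref{string}, $\psi\circ_i\omega$ corresponds to substituting the letters of $w_\omega$ in order into the $p_i$-positions of $w_\psi$ (with the standard renumbering), and $\tr(\psi\circ_i\omega)$ is the idempotent reduction of the result. On the other hand, $\tr_i(\psi)=(\tr(\psi))_i$ is by construction a word in $FM$ which equals $\tr(\psi)$ in $FI$ and is reduced except at $s_i$; since $\psi$ is a shuffle, $\tr_i(\psi)$ simply records which positions in $w_\psi$ carry the letter $p_i$ and which do not. The substitution $\tr_i(\psi)\circ\omega$ plugs $w_\omega$ into those $s_i$-positions and then applies $\tr$. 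Both procedures yield the same raw $\FM$-word before the final $\TT$-reduction, so the two applications of $\tr$ produce the same sketch.

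The main obstacle is the last equality in part (2), where three different combinatorial descriptions of complexity (corners of a shuffle lattice path, length of a reduced sketch, and the linking number of an image in $\MM$) have to be aligned. The delicate point is to check that the passage from $p(\psi_{ij})$ to its sharp Delannoy contraction via Lemma \ref{nond} preserves both the corner count and the linking number, and that the unique lifting supplied by Lemma \ref{sharplink} is genuinely the path recovered from $\psi_{ij}$ by collapsing identity generators. Once this bridge is in place, all three parts reduce to routine bookkeeping on words.
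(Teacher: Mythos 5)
Your proposal is correct and follows the same route as the paper, whose proof is simply ``Obvious from definitions'': all three parts are definitional bookkeeping, with the last equality in part (2) obtained exactly as in Lemma \ref{shuflink} via Lemma \ref{sharplink}. The only imprecision is in part (3), where the two procedures do not produce literally the same raw $\FM$-word (in $\tr_i(\psi)\circ\omega$ the non-$i$ variables are already reduced), but since the substitution leaves those positions untouched and idempotent reduction is confluent, the reduced sketches coincide as you conclude.
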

\begin{proof} Obvious from definitions.
\end{proof}
%{\color{red} \begin{remark} It seems that $\TT$ itself is not an operad in $\Set$ but it can be completed to an algebraic operad. For this let
%$\bZ[\TT](n)$ be free abelian group generated by $\TT(n).$ Then define the substitution on generators
%$$s\circ_i t = \sum (s)_i\circ t$$
%where $(s)_i$ are all compatible extensions of $s$ at the $i$-th variable. 
 
%In this way we have maps of operads $$ \bZ[\shuf]\stackrel{\tr}{\longrightarrow} \bZ[\TT]\stackrel{\varpi}{\longrightarrow} {\rm Zin}.$$
%Here ${\rm Zin}$ is the Zinbiel's operad of Loday..

%But we are not going to develop this point of view in this paper. 

%\end{remark}}

%%%%%%%%%%%%

\end{document}